\documentclass[journal,twoside,web]{ieeecolor}
\usepackage[colorlinks=true,citecolor=blue,linkcolor=blue,urlcolor=blue]{hyperref}
\usepackage{generic}         
\usepackage{booktabs}  
\usepackage{amsmath,amssymb,amsfonts}
\usepackage{dsfont}
\usepackage{graphicx}

\usepackage{bbm}
\usepackage{xcolor}
\usepackage{algorithm}
\usepackage{algpseudocode}
\graphicspath{{./}{./figures/}}

\usepackage{tikz}
\usepackage{matlab-prettifier}
\usepackage[mathscr]{euscript}
\usepackage{url}
\usepackage{cite}
\usepackage{mathtools}

\usepackage{subcaption}
\usepackage{bbm}

\usepackage{etoolbox}

\newtheorem{theorem}{\normalfont\bfseries Theorem}
\newtheorem{corollary}{\normalfont\bfseries Corollary}
\newtheorem{lemma}{\normalfont\bfseries Lemma}
\newtheorem{proposition}{\normalfont\bfseries Proposition}
\newtheorem{problem}{\normalfont\bfseries Problem}
\newtheorem{assumption}{\normalfont\bfseries Assumption}

\newtheorem{example}{Example}
\newtheorem{remark}{Remark}

\AtBeginEnvironment{theorem}{\itshape}
\AtBeginEnvironment{corollary}{\itshape}
\AtBeginEnvironment{lemma}{\itshape}
\AtBeginEnvironment{proposition}{\itshape}
\AtBeginEnvironment{problem}{\itshape}
\AtBeginEnvironment{assumption}{\itshape}

\newcommand{\calA}{{\mathcal A}}
\newcommand{\calB}{{\mathcal B}}
\newcommand{\calC}{{\mathcal C}}
\newcommand{\calD}{{\mathcal D}}
\newcommand{\calE}{{\mathcal E}}
\newcommand{\calF}{{\mathcal F}}

\newcommand{\calH}{{\mathcal H}}
\newcommand{\calI}{{\mathcal I}}
\newcommand{\calJ}{{\mathcal J}}

\newcommand{\calN}{{\mathcal N}}
\newcommand{\calO}{{\mathcal O}}
\newcommand{\calP}{{\mathcal P}}

\newcommand{\calR}{{\mathcal R}}

\newcommand{\calX}{{\mathcal X}}
\newcommand{\calY}{{\mathcal Y}}

\newcommand{\scrB}{{\mathscr B}}

\newcommand{\scrH}{{\mathscr H}}

\newcommand{\bbE}{{\mathbb E}}

\newcommand{\bbK}{{\mathbb K}}

\newcommand{\bbP}{{\mathbb P}}
\newcommand{\bbQ}{{\mathbb Q}}
\newcommand{\bbR}{{\mathbb R}}

\newcommand{\bbU}{{\mathbb U}}

\newcommand{\bbZ}{{\mathbb Z}}

\newcommand{\diag}{\mathop{\rm diag}\nolimits}

\newcommand{\bfu}{{\mathbf u}}
\newcommand{\bfv}{{\mathbf v}}

\newcommand{\bfP}{{\mathbf P}}
\newcommand{\bfQ}{{\mathbf Q}}
\newcommand{\bfR}{{\mathbf R}}
\newcommand{\bfK}{{\mathbf K}}

\newcommand{\rmd}{{\rm d}}

\newcommand{\rmx}{{\rm x}}

\newcommand{\sfM}{{\sf M}}

\newcommand{\bfone}{\mathbbm{1}}

\newcommand{\Ee}{{\rm e}}   

\newcommand{\bbra}[1]{\ensuremath{[\![#1]\!]} }  % [[  ]]

\newcommand{\kl}[2]{D_{\textrm{\rm KL}} ({#1}\|{#2})}
\newcommand{\klg}[2]{\calD ({#1}\|{#2})}

\newcommand{\wtilde}[1]{\widetilde{#1}}

\newcommand{\jref}{{\bar{\mathbf{Q}}}}

\newcommand{\x}{{\xi}}
\newcommand{\kil}{{\rm kill}}
\newcommand{\FK}{{\rm FK}}

\DeclareMathOperator{\rank}{rank}

\DeclareFontFamily{U}{mathx}{\hyphenchar\font45}
\DeclareFontShape{U}{mathx}{m}{n}{
      <5> <6> <7> <8> <9> <10>
      <10.95> <12> <14.4> <17.28> <20.74> <24.88>
      mathx10
      }{}
\DeclareSymbolFont{mathx}{U}{mathx}{m}{n}
\DeclareFontSubstitution{U}{mathx}{m}{n}
\DeclareMathAccent{\widecheck}{0}{mathx}{"71}
\DeclareMathAccent{\wideparen}{0}{mathx}{"75}

\newcommand{\ft}{N}

\newcommand{\cyan}[1]{{\color{black}{#1}}} 
 
\newcommand{\red}[1]{{\color{black}{#1}}}

\newcommand{\dlt}[1]{}

\newcommand{\schr}{{Schr\"{o}dinger}}

\begin{document}

\bstctlcite{}
\title{
Control Realization of \schr~Bridges over Discrete-Time Nonlinear Systems via Maximum-Entropy Distribution Steering
}
\author{Kaito Ito\thanks{Kaito Ito is with the Department of Information Physics and Computing, The University of Tokyo, Tokyo 113-8654, Japan {\tt \small kaito@g.ecc.u-tokyo.ac.jp}.}
and Kenji Kashima\thanks{Kenji Kashima is with the Graduate School of Informatics, Kyoto University, Kyoto 606-8501, Japan {\tt \small kk@i.kyoto-u.ac.jp}.}
\thanks{This work was supported in part by JSPS KAKENHI Grant Number JP24K17297.}
}

\maketitle
\thispagestyle{empty}

\begin{abstract}
\red{This paper shows how a \schr~bridge (SB) structure emerges from maximum-entropy (MaxEnt) distribution steering of discrete-time nonlinear systems. We also establish that the resulting bridge can be realized by an optimal control policy.}
We consider general state and input costs, non-Gaussian endpoint distributions, and input constraints. 
Assuming the existence of a feasible policy with finite objective value, we show that the state-path measure under a MaxEnt optimal control policy is the unique SB. The reference measure of this SB is induced by an optimal policy of the corresponding terminal-unconstrained problem.
Moreover, we give an equivalent characterization of the optimal state-path measure using a killed reference and its normalized Feynman--Kac measure, thereby interpreting the state cost as killing.
We then characterize the finite-cost feasibility by the existence of an endpoint coupling having finite Kullback--Leibler divergence from the reference endpoint distribution. For compactly supported marginals with bounded target density, we derive reachability-based sufficient conditions guaranteeing this feasibility. We further provide specializations to fully actuated control-affine and reachable linear systems.
\end{abstract}

\begin{IEEEkeywords}
  \schr~bridge, optimal control, distribution steering, discrete-time nonlinear systems.
\end{IEEEkeywords}

\section{Introduction}
\label{sec:introduction}

\IEEEPARstart{T}{his} paper studies discrete-time \schr~bridges (SBs) on continuous spaces.
The SB problem originates from an inquiry by \schr~\cite{Schrodinger1931,Schrodinger1932}: given marginals and prior dynamics, what is the most probable trajectory connecting the two prescribed marginals? In a standard modern formulation, it is posed as the minimization of the Kullback--Leibler (KL) divergence between the path measure over trajectories and the reference measure induced by the prior~\cite{Leonard2014}.
SB problems can be understood as entropy-regularized optimal transport~\cite{Peyre2019} and have recently found applications in generative modeling~\cite{De2021}, image restoration~\cite{Liu23ai}, system identification from snapshot data~\cite{Morimoto2025linear}, and trajectory inference from single-cell data~\cite{Chen2023deep}.

The SB problem is inherently an optimal distribution steering problem that transports the state distribution from the initial distribution to the desired distribution, and there is a rich literature on the control-theoretic viewpoint of SBs, especially for continuous-time systems~\cite{Follmer2006random,Mikami1990variational,Pra1990,Chen2016relation}; see~\cite{Chen2021liaison} for a comprehensive review and \cite{Mikami2021stochastic} for a monograph on SBs and stochastic control with fixed marginals.
Under the distribution control minimizing a cumulative quadratic control cost, the state process follows an SB, where the prior is induced by the uncontrolled dynamics driven by noise.
When the cost functional contains state costs, the distribution steering problem is equivalent to an SB problem with killing of probability mass~\cite{Wakolbinger1989,Nagasawa1989stochastic,Pra1990,Teter2024schr}.

Despite extensive studies on the relationships between SBs and control, comparatively limited attention has been devoted to discrete-time settings~\cite{Beghi1996,Levy1997discrete,Vladimirov2015state,Chen2016robust,Lambert2025lqr}. The work~\cite{Ito2022maximum} established that, for possibly degenerate discrete-time linear systems with Gaussian driving noise and Gaussian endpoint marginals, the SB is equivalent to optimal distribution steering with an entropy-regularized quadratic input cost. 
Entropy-regularized, or maximum-entropy (MaxEnt), distribution steering with state and input costs was subsequently studied in the linear-quadratic-Gaussian (LQG) setting~\cite{Ito2025maximum}, \red{and an SB characterization was established under invertibility assumptions on the system and input matrices~\cite{Zhan2026}.
Beyond the LQG setting, however, it remains unclear whether and how MaxEnt distribution steering can be characterized as an SB for nonlinear dynamics, non-Gaussian endpoint marginals, and general state and input costs.
This paper addresses these questions.
}

{\it Contributions:}
The main contributions of this paper are summarized as follows:
\begin{enumerate}
    \item We establish the equivalence between MaxEnt distribution steering of deterministic discrete-time nonlinear systems and an SB, allowing non-Gaussian marginals, general state and input costs, and input constraints (Theorem~\ref{thm:maxent_solves_SB}). The SB reference is induced by the optimal state process of the corresponding terminal-unconstrained control problem. We also characterize optimal Markov policies that realize the bridge (Propositions~\ref{prop:opt_policy_condi}--\ref{prop:factorization_sufficient}). Moreover, we interpret the state cost as a killing mechanism by showing that the optimal state-path measure is also the unique minimizer of the SB problems defined by the killed reference and its normalized Feynman--Kac reference (Theorem~\ref{thm:killed_FK_equiv}). 
    \item We prove that finite-cost feasibility of MaxEnt distribution steering is equivalent to the existence of an endpoint coupling with finite KL divergence from the reference endpoint distribution (Corollary~\ref{cor:finite_cost}). 
    This condition, together with Theorem~\ref{thm:maxent_solves_SB}, also guarantees the existence of an optimal control policy.
    For compactly supported marginals and a bounded target density, we derive reachability-based sufficient conditions for the finite-cost feasibility (Theorem~\ref{thm:uniform-reachability-finite-entropy}).
    We further provide blockwise, fully actuated control-affine, and linear-system specializations (Corollaries~\ref{cor:reachability_block}--\ref{cor:linear_reachability}).
\end{enumerate}

\red{These results reveal a discrete-time SB structure
that emerges directly from the control problem.
Because our construction starts from a Gibbs reference on the control input,
it accommodates general deterministic nonlinear dynamics, as well as
general input costs and input constraints.
This contrasts with the Girsanov-based continuous-time control interpretation of SBs, which relies on control-affine drift perturbations. In that setting, the path-space KL divergence induces a quadratic input cost~\cite{Follmer2006random,Pra1990}.
Although our SB reference is derived from the associated MaxEnt control problem, its induced path measure retains the noise-driven structure familiar from continuous-time SBs.
This construction of the SB reference provides both an admissible control realization of the bridge and access to efficient computational methods for SBs including the Sinkhorn algorithm (see Subsection~\ref{subsec:numerical}).}

{\it Related work:}
\red{An early study \cite{Beghi1996} considered discrete-time SBs for Markov processes with transition densities and
showed that the SB is again Markov. The Gaussian case was further studied in \cite{Levy1997discrete}.
Subsequent works studied optimization of driving-noise distributions
under prescribed endpoint state distributions
\cite{Vladimirov2015state}, and finite-state SB formulations for
network transport~\cite{Chen2016robust}.
More recently, \cite{Lambert2025lqr} considered SBs with LQR-type path costs and Gaussian marginals.
However, these discrete-time SB formulations do not in general establish an equivalence with optimal control for prior dynamics or provide an admissible control policy that realizes the SB.
Our formulation addresses this gap by deriving the SB reference directly from the MaxEnt control problem, which yields both the equivalence and an admissible control realization.

In continuous time, state costs have been related to killed reference
processes through Feynman--Kac weighting since at least
\cite{Pra1990}. Our reference construction reveals the corresponding discrete-time
structure: the state cost appears as a survival weighting of the
noise-driven reference, while the resulting normalized Feynman--Kac
measure yields the same SB.

}

For distribution steering without entropy regularization, suitable point-to-point reachability, together with standard compactness and continuity assumptions, guarantees feasibility under stochastic feedback~\cite{Elamvazhuthi2018optimal}.
For finite-cost SBs, mere reachability is insufficient. At least one endpoint coupling must also have finite KL divergence with respect to the reference. 
Our sufficient conditions ensure this by requiring a uniformly nonvanishing family of regular steering inputs that remains in the interior of the admissible input set.

\textit{Organization:}
The remainder of this paper is organized as follows. In Section~\ref{sec:formulation}, we formulate the SB and MaxEnt distribution steering problems.
In Section~\ref{sec:SB_maxent_eq}, we establish the SB characterization of the MaxEnt distribution steering and characterize optimal policies.
In Section~\ref{sec:killing}, we give an interpretation of the state cost through a killed reference measure and a normalized Feynman--Kac measure.
In Section~\ref{sec:reachability}, we derive reachability-based sufficient conditions for the existence of finite-KL endpoint couplings.
Section~\ref{sec:conclusion} concludes the paper.

\textit{Notations:}
Let $ \bbR $ denote the set of real numbers.
Denote the sets of positive and nonnegative integers by
$\mathbb{Z}_{>0}$ and $\mathbb{Z}_{\geq 0}$, respectively.
For $ k, \ell \in \bbZ_{\ge 0} $ such that $ k \le \ell $, a sequence $ \{ k,\ldots,\ell\} $ is denoted by $ \bbra{k,\ell} $.
A sequence of vectors $ \{x_{k},\ldots,x_\ell\} $ is denoted by $ x_{k:\ell} $.
For a symmetric matrix $ A $, we write $ A \succ 0 $ (resp. $ A\succeq 0 $) if $ A $ is positive definite (resp. semidefinite). 
The identity matrix is denoted by $ I $, and its dimension depends on the context.
The transpose of the inverse of an invertible matrix $ A $ is denoted by $ A^{-\top} $.
The Euclidean norm is denoted by $ \|\cdot \| $.
The set of probability measures on a set $ \calX $ is denoted by $ \calP(\calX) $.
The expectation is denoted by $ \bbE $.
\cyan{The KL divergence between probability measures $ \bbP $ and $ \bbQ $ is denoted by $ \kl{\bbP}{\bbQ} $.}
A random variable and its realization may share the same symbol.
For a measurable map $ T : \calX \to \calY $, $ T_\# \mu $ denotes the pushforward of a measure $ \mu $, defined by $ (T_\#\mu) (S) = \mu(T^{-1} (S)) $ for measurable $ S \subseteq \calY $.
For a topological space $ S $, $ \scrB(S) $ denotes the Borel $ \sigma $-algebra on $ S $.
Denote by $\mathbf{1}$ an all-ones vector of
dimension determined by context, whereas $\bfone_S$ denotes
the indicator function of a set $S$.

\section{Problem formulation}\label{sec:formulation}
We first introduce the MaxEnt distribution steering problem, and then recall the SB problem.

\subsection{MaxEnt distribution steering}
In this paper, we consider a deterministic discrete-time nonlinear system
\begin{align}
    x_{k+1} &= f_k (x_k, u_k), \qquad k \in \bbZ_{\ge 0}, \label{eq:system}\\
    x_0 &\sim \mu_0 ,
\end{align}
where $ \{x_k\} $ is an $ \bbR^n $-valued state process, $ \{u_k\} $ is a $ \bbU $-valued control process, $ f_k : \bbR^n \times \bbU \rightarrow \bbR^n $ is Borel measurable, and $ \mu_0 \in \calP(\bbR^n) $ is the initial state distribution.
The input set $ \bbU \subseteq \bbR^m $ is a nonempty Borel set satisfying
$
    \mu_L^m (\bbU) > 0
$, 
where $ \mu_L^m $ denotes the Lebesgue measure on $ \bbR^m $.\footnote{The SB characterization in Sections~\ref{sec:SB_maxent_eq} and \ref{sec:killing} extends to discrete input spaces by replacing the Lebesgue measure with the counting measure.}
To formulate the MaxEnt distribution steering problem, we introduce state and input cost functions $ q_k $ and $ \ell_k $, respectively, under the following assumption.
\begin{assumption}[Costs]\label{ass:cost}
    For any $ k \in \bbra{0,\ft-1} $, $ q_k : \bbR^n \rightarrow [0,\infty) $ and $ \ell_k : \bbU \rightarrow \bbR $ are Borel measurable and
    \begin{equation}
        0 < Z_{\ell,k} := \int_{\bbU} \Ee^{-\ell_k (u)} \rmd u < \infty .
    \end{equation}
\end{assumption}
\vspace{0.2cm}

The above integrability condition allows us to define the Gibbs reference input measure
\begin{equation}\label{eq:gibbs_input}
    R_k (\rmd u ) := (Z_{\ell,k})^{-1} \Ee^{-\ell_k(u)} \mu_L^m|_\bbU (\rmd u) ,
\end{equation}
where $ \mu_L^m|_\bbU $ is the restriction of the Lebesgue measure $ \mu_L^m $ to $ \bbU $. By construction, we have $ R_k (\bbU) = 1 $.

A causal control policy $ \pi = \{\pi_k\}_{k=0}^{\ft-1} $ consists of Borel stochastic kernels on $ \bbU $ given the history
$
    h_k := \{ x_0,u_0,\ldots,x_{k-1},u_{k-1},x_k \} 
$.
Under the probability measure $ \bbP^\pi $ induced by $ \pi $, the control input $ u_k $ is generated such that $ \bbP^\pi (u_k \in A \mid h_k) = \pi_k (A \mid h_k) $ for any Borel set $ A \subseteq \bbU $.
A causal policy is called a Markov policy if, for each $ k \in \bbra{0,\ft-1} $, $ \pi_k (\cdot \mid h_k) $ depends on the history $ h_k $ only through the current state $ x_k $, in which case we write $ \pi_k(\cdot \mid x_k) $.

The MaxEnt control objective is given by
\begin{equation}\label{eq:maxent_objective}
    J(\pi) := \bbE^\pi \! \left[ \sum_{k=0}^{\ft-1} \Bigl( q_k(x_k) + \kl{\pi_k(\cdot \mid h_k)}{R_k} \Bigr) \right] ,
\end{equation}
where $ \bbE^\pi $ denotes the expectation under the policy $ \pi $ and $ N \in \bbZ_{>0} $ is a finite horizon.
The KL term penalizes deviations of the policy $ \pi_k (\cdot \mid h_k) $ from the reference input measure $ R_k $.
Since $ q_k  $ is nonnegative, if $J(\pi)<\infty$, then, for each $k$, $\pi_k(\cdot\mid h_k)\ll R_k$ for almost every $ h_k $ under $\pi$.
Moreover, since $R_k\ll \mu_L^m|_{\bbU}$, the kernel $ \pi_k $ admits a conditional density $ \varpi_k (\cdot \mid h_k) $ with respect to the Lebesgue measure. If the expected input cost and the differential entropy $ \scrH(\varpi_k (\cdot\mid h_k)) := -\int_{\bbU} \varpi_k(u|h_k) \log \varpi_k (u|h_k) \rmd u $ are finite, the KL term can be written as
\begin{align*}
    &\kl{\pi_k(\cdot \mid h_k)}{R_k} \\
    &= \int_\bbU \ell_k (u) \varpi_k (u  \mid h_k) \rmd u - \scrH(\varpi_k(\cdot \mid h_k)) + \log Z_{\ell,k} .
\end{align*}
Therefore, up to an additive constant independent of $ \pi $, the objective $ J $ takes the entropy-regularized form
\[
\bbE^\pi \Biggl[ \sum_{k=0}^{\ft-1} \biggl( q_k (x_k) + \ell_k (u_k) - \scrH(\varpi_k (\cdot \mid h_k))   \biggr)  \Biggr] .
\]
This representation shows that the KL term combines the expected input cost with an entropy-regularization term.
Entropy regularization offers several benefits in reinforcement learning, including encouraging exploration; see, e.g.,~\cite{Levine2018reinforcement,Geist2019theory}.

The MaxEnt distribution steering problem seeks a control policy that steers the state from the initial distribution $ \mu_0 $ to the desired distribution $ \mu_N $ at the terminal time, while minimizing the cumulative cost.
Specifically, this problem is formulated as follows.
\begin{problem}[\bf MaxEnt distribution steering]\label{prob:MaxEnt}
	Given initial and target distributions $ \mu_0, \mu_\ft \in \calP(\bbR^n) $, find a control policy $ \pi = \{\pi_k\}_{k=0}^{\ft-1} $ that solves
	\begin{align}
            &\underset{\pi }{\textrm{\rm minimize}}  && J(\pi) \label{eq:cost} \\
			&{\textrm{\rm subject to}} && \eqref{eq:system}, \quad u_k \in \bbU  , \qquad \forall k \in \bbra{0,\ft-1}, \label{eq:input_constraint}\\
				& &&u_k \sim \pi_k (\cdot \mid h_k) , \qquad \forall k \in \bbra{0,\ft-1}, \label{eq:policy}\\
				& &&x_0 \sim \mu_0, \quad  x_N \sim \mu_\ft . \label{eq:cov_constraint}
	\end{align}
\end{problem}
\vspace{0.2cm}

\begin{remark}
    When the map $ u\mapsto f_k (x,u) $ is bijective, for Markov policies, the KL term in $ J $ is equivalently expressed as the KL divergence between the controlled and reference one-step state-transition distributions, recovering the continuous-space KL control in~\cite{Ito2022kullback}. The present input-space formulation does not require such invertibility and therefore also accommodates degenerate state transitions.
    \hfill $ \diamondsuit $
\end{remark}

\subsection{\schr~bridge}
In an SB problem, a reference measure over trajectories and marginal distributions at the endpoints of a time interval are given. The reference measure describes a prior stochastic evolution, which was originally modeled as Brownian motion~\cite{Schrodinger1931,Schrodinger1932} and has been extended to general diffusion processes~\cite{Follmer2006random,Mikami1990variational,Pra1990}.
Given prior dynamics, the SB problem seeks a probability distribution over trajectories that satisfies the prescribed endpoint marginals while remaining as close as possible to the reference evolution.
Specifically, the discrete-time SB problem is formulated as the minimization of the KL divergence between this trajectory distribution and the reference measure.
A probability measure on the state-trajectory space $ (\bbR^{n})^{N+1} $ is referred to as a path measure.
\begin{problem}[\bf Schrödinger bridge]
\label{prob:SB}
Given a reference path measure ${\mathbb{Q}}_{0:N} \in \calP((\bbR^{n})^{N+1})$ and marginals $ \mu_0, \mu_\ft \in \calP(\bbR^n) $, find a path measure $\mathbb{P}_{0:N} \in \calP((\bbR^{n})^{N+1}) $ that solves
\begin{align} 
    &\underset{\mathbb{P}_{0:N} \in \calP((\bbR^{n})^{N+1})}{\textrm{\rm minimize}} && \quad D_{\textrm{\rm KL}}\! \left(\mathbb{P}_{0:N} \middle\| {\mathbb{Q}}_{0:N}\right) \label{eq:SB_objective}\\
    &\quad \textrm{\rm ~subject to}  && (T_0)_{\#} \mathbb{P}_{0:N} = \mu_0, \ \ (T_\ft)_{\#} \mathbb{P}_{0:N} = \mu_\ft,\nonumber 
\end{align}
where $ T_k (x_0,\ldots,x_\ft) := x_k $.
\end{problem}

An optimal solution to Problem~\ref{prob:SB} is called an SB between $ \mu_0 $ and $ \mu_\ft $.
The work~\cite{Ito2022maximum} studied the SB problem where the reference $ \bbQ_{0:\ft} $ is given by the linear system $ x_{k+1} = A_k x_k + B_k w_k $ driven by the standard Gaussian noise $ \{w_k\} $ and marginals $ \mu_0 $ and $ \mu_\ft $ are Gaussian. The authors showed that the unique SB is the path measure generated by the MaxEnt distribution steering with $ q_k(x) \equiv 0 $, $ \ell_k (u) = \| u \|^2 /2 $, and $ \bbU = \bbR^m $.

\section{\schr~bridge characterization of MaxEnt distribution steering}\label{sec:SB_maxent_eq}
This section establishes the equivalence between MaxEnt distribution steering and the SB problem. We first reformulate the control objective as a KL divergence on the state-input space, then relate the resulting problem to an SB, and finally characterize optimal Markov policies.

\subsection{MaxEnt control as KL minimization}\label{subsec:kl_reformulation}

We begin by introducing an auxiliary MaxEnt control problem obtained by removing the terminal distribution constraint.
\begin{align}
        &\underset{\pi }{\textrm{minimize}}  && J(\pi) 
        &\textrm{subject~to} && \eqref{eq:input_constraint}\text{,~}\eqref{eq:policy}, ~ x_0 \sim \mu_0 . \label{prob:unconstrained}
\end{align}
We show that the optimal state-input distribution of the above problem serves as the reference measure in the KL reformulation of the MaxEnt control problem.

To characterize the optimal policy of Problem~\eqref{prob:unconstrained}, define the so-called desirability functions~\cite{Ito2022kullback,Todorov2009} by
\begin{align}
    \psi_\ft (x) &= 1 , \qquad  x \in \bbR^n , \label{eq:desire_boundary}\\
    \psi_k (x) &= \Ee^{-q_k(x)} \int_{\bbU} \psi_{k+1} (f_k(x,u)) R_k (\rmd u) ,\nonumber\\
     &\qquad\qquad\quad (k,x) \in \bbra{0,\ft-1}\times \bbR^n. \label{eq:desire_bellman} 
\end{align}
By backward induction, we have that for any $ x \in \bbR^n $,
\begin{equation}\label{eq:psi_bounded}
0 < \psi_k (x) \le 1  .
\end{equation}
We will see that
\begin{equation}\label{eq:value_func_def}
V_k (x) := - \log \psi_k (x) , \qquad x \in \bbR^n 
\end{equation}
serves as a value function of Problem~\eqref{prob:unconstrained}.
The following assumption is used to ensure that the minimum value of Problem~\eqref{prob:unconstrained} is finite.
\begin{assumption}[Finite minimum of Problem~\eqref{prob:unconstrained}]\label{ass:finite_value_func}
    The value function $ V_0 $ in \eqref{eq:value_func_def} and the initial distribution $ \mu_0 $ satisfy
    \begin{equation}
        \int_{\bbR^n} V_0 (x) \mu_0 (\rmd x) < \infty .
    \end{equation}
\end{assumption}
\vspace{0.2cm}

Define the Markov policy on $ \bbU $:
\begin{equation}\label{eq:pibar_def}
\begin{aligned}
    \bar{\pi}_k (\rmd u \mid x) &:= \frac{\psi_{k+1} (f_k(x,u))}{\int_\bbU \psi_{k+1}(f_k(x,v)) R_k (\rmd v)} R_k (\rmd u) .
\end{aligned}
\end{equation}
Denote by $ \bfP^\pi $ the state-input distribution, that is, the joint distribution of the initial state $ x_0 $ and the input sequence $ \{u_k\}_{k=0}^{N-1} $ induced by a policy $ \pi $ with $ x_0 \sim \mu_0 $:
\begin{equation}\label{eq:Ppi_def}
    \bfP^\pi (\rmd x_0, \rmd u_{0:\ft-1}) := \mu_0 (\rmd x_0) \prod_{k=0}^{\ft-1} \pi_k (\rmd u_k \mid h_k ) .
\end{equation}
For the policies $ R= \{R_k\} $ and $ \bar{\pi} = \{\bar{\pi}_k\} $, define
\begin{align}
    \bfR  := \bfP^{R}, \qquad \jref  := \bfP^{\bar{\pi}} . \label{eq:barP_def}
\end{align}

The following result shows that the MaxEnt control problem can be reformulated, up to a policy-independent term, as the minimization of the KL divergence between the state-input distributions $ \bfP^\pi $ and $ \jref $.
The proof is given in Appendix~\ref{app:performance_decompose}.
\begin{proposition}[KL reformulation of MaxEnt control]\label{prop:performance_decompose}
    Under Assumptions~\ref{ass:cost} and \ref{ass:finite_value_func}, any causal policy $ \pi $ supported on $ \bbU $ satisfies
    \begin{equation}\label{eq:performance_decompose}
        J(\pi) = \int_{\bbR^n} V_0 (x) \mu_0 (\rmd x) + \kl{\bfP^\pi}{\jref} .
    \end{equation}
    Moreover, $ \jref $ is the unique optimal state-input distribution for Problem~\eqref{prob:unconstrained}, and $ \bar{\pi} $ in \eqref{eq:pibar_def} is the corresponding optimal policy.
\end{proposition}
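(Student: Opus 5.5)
The plan is to compute the Radon--Nikodym derivative $\rmd \bfP^\pi / \rmd \jref$ explicitly and show that it is the exponential of a telescoping sum. Because the dynamics are deterministic, the history $h_k$ is a measurable function of $(x_0,u_{0:k-1})$. By the definitions \eqref{eq:Ppi_def} and \eqref{eq:barP_def}, both $\bfP^\pi$ and $\jref$ are then built from the same initial law $\mu_0$ and successive kernels on $\bbU$.

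\textbf{Chain rule for KL.} If $J(\pi)=\infty$, we must show $\kl{\bfP^\pi}{\jref}=\infty$; we return to this after the finite case. Suppose $J(\pi)<\infty$. Then $\pi_k(\cdot\mid h_k)\ll R_k$ almost surely. By \eqref{eq:pibar_def} and \eqref{eq:psi_bounded}, $\bar\pi_k(\cdot\mid x)$ and $R_k$ are mutually absolutely continuous, with density
\[
\frac{\rmd\bar\pi_k(\cdot\mid x)}{\rmd R_k}(u)=\frac{\psi_{k+1}(f_k(x,u))\,\Ee^{-q_k(x)}}{\psi_k(x)}.
\]
Hence
\[
\log\frac{\rmd \pi_k}{\rmd \bar\pi_k}=\log\frac{\rmd \pi_k}{\rmd R_k}+V_{k+1}(x_{k+1})-V_k(x_k)+q_k(x_k),
\]
where we used $\Ee^{-q_k}/\psi_k=\Ee^{V_k-q_k}$. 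Summing over $k$ and using $V_N=0$, the terms telescope:
\[
\log\frac{\rmd\bfP^\pi}{\rmd\jref}=\sum_{k=0}^{N-1}\Bigl(\log\frac{\rmd\pi_k}{\rmd R_k}+q_k(x_k)\Bigr)-V_0(x_0).
\]
Taking expectations under $\bfP^\pi$ gives \eqref{eq:performance_decompose}, provided the expectation can be split term by term.

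\textbf{Integrability (main obstacle).} The main obstacle is justifying that split, since the individual terms may be of indefinite sign. The KL terms conditional on $h_k$ are nonnegative, as are $q_k$ and $V_k$ (because $\psi_k\le1$), and $\bbE[V_0(x_0)]<\infty$ by Assumption~\ref{ass:finite_value_func}. I would therefore avoid the pathwise log-density and use the chain rule for KL divergence of joint laws built from kernels,
\[
\kl{\bfP^\pi}{\jref}=\sum_k \bbE^\pi\bigl[\kl{\pi_k(\cdot\mid h_k)}{\bar\pi_k(\cdot\mid x_k)}\bigr],
\]
which holds in $[0,\infty]$ without integrability assumptions. Next, for each fixed $h_k$, compute
\[
\kl{\pi_k}{\bar\pi_k}=\kl{\pi_k}{R_k}+q_k(x_k)-V_k(x_k)+\int V_{k+1}(f_k(x_k,u))\,\pi_k(\rmd u\mid h_k).
\]
This identity is valid in $[0,\infty]$ because $V_k(x_k)<\infty$ and the remaining terms are nonnegative. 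Summing these identities requires that each $\bbE^\pi V_k(x_k)$ be finite so that the telescoping is legitimate. I would establish this by a forward induction using the one-step inequality
\[
\bbE^\pi\bigl[V_{k+1}(x_{k+1})\mid h_k\bigr]\le V_k(x_k)-q_k(x_k)+\kl{\pi_k}{\bar\pi_k}.
\]
This handles both cases at once: if both sides of \eqref{eq:performance_decompose} are finite, the telescoping is justified; if either side is infinite, the one-step identities show that the other side is infinite too. The latter also covers the case in which $\pi_k\not\ll R_k$ on a set of positive probability, since then both KL divergences are infinite.

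\textbf{Optimality and uniqueness.} Since $\bfP^{\bar\pi}=\jref$, the decomposition gives $J(\bar\pi)=\int V_0\,\rmd\mu_0$, and every admissible $\pi$ satisfies $J(\pi)\ge J(\bar\pi)$. Thus $\bar\pi$ is optimal for Problem~\eqref{prob:unconstrained}. Equality $J(\pi)=J(\bar\pi)$ forces $\kl{\bfP^\pi}{\jref}=0$, that is, $\bfP^\pi=\jref$, which proves that $\jref$ is the unique optimal state-input distribution.
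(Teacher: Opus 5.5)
Your route differs from the paper's and is sound except for one step. The forward induction based on $\bbE^\pi[V_{k+1}(x_{k+1})\mid h_k]\le V_k(x_k)-q_k(x_k)+\kl{\pi_k(\cdot\mid h_k)}{\bar\pi_k(\cdot\mid x_k)}$ only works when $\kl{\bfP^\pi}{\jref}<\infty$; otherwise its right-hand side need not be integrable. So it does not cover the direction $J(\pi)<\infty\Rightarrow\kl{\bfP^\pi}{\jref}<\infty$. Nothing you wrote rules out $J(\pi)<\infty$ together with $\bbE^\pi V_k(x_k)=\infty$ for some $1\le k\le N-1$, and in that case summing the one-step identities only gives $\infty=\infty$. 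The fix uses the same identity in the other direction. Dropping $\kl{\pi_k(\cdot\mid h_k)}{\bar\pi_k(\cdot\mid x_k)}\ge 0$ gives
\[
V_k(x_k)\le \kl{\pi_k(\cdot\mid h_k)}{R_k}+q_k(x_k)+\bbE^\pi[V_{k+1}(x_{k+1})\mid h_k],
\]
and a backward induction from $V_N\equiv 0$ yields $\bbE^\pi V_k(x_k)\le J(\pi)$ for every $k$. With both inductions, finiteness of either side of \eqref{eq:performance_decompose} forces every $\bbE^\pi V_k(x_k)$ to be finite, and your telescoping goes through.

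The paper never introduces the intermediate $V_k$. Lemma~\ref{lem:feynman-kac_density} in Appendix~\ref{app:performance_decompose} shows that $\jref$ and $\bfR$ are mutually absolutely continuous with $\rmd\jref/\rmd\bfR=\exp(-\sum_k q_k(x_k))/\psi_0(x_0)$, so the $\psi_k$ cancel pathwise inside the density. The chain rule is then applied only against $\bfR$, giving $\kl{\bfP^\pi}{\bfR}=\sum_k\bbE^\pi[\kl{\pi_k(\cdot\mid h_k)}{R_k}]$. Finally the paper integrates $\log(\rmd\bfP^\pi/\rmd\jref)=\log(\rmd\bfP^\pi/\rmd\bfR)-V_0(x_0)+\sum_k q_k(x_k)$. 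The first term has integrable negative part, the second is integrable by Assumption~\ref{ass:finite_value_func}, and the third is nonnegative, so the expectation splits in $(-\infty,\infty]$ at once. The case $\bfP^\pi\not\ll\bfR$ is handled by the mutual absolute continuity.

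This is exactly the pathwise formula you wrote first and then set aside. Your ``main obstacle'' disappears once $\sum_k\log(\rmd\pi_k/\rmd R_k)$ is kept together as $\log(\rmd\bfP^\pi/\rmd\bfR)$. Your stagewise route needs the two inductions, but in return it gives more: a per-stage KL identity and the verification-type bound $\bbE^\pi V_k(x_k)\le J(\pi)$.
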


Proposition~\ref{prop:performance_decompose} also shows that the minimum value of the terminal-unconstrained problem~\eqref{prob:unconstrained} is $ \int V_0 \rmd \mu_0 $.
In particular, when the initial state is fixed at $ x_0 = x $, the minimum value is $ V_0 (x) $, which clarifies that $ V_0 $ is the value function for \eqref{prob:unconstrained}.

\subsection{Equivalence between MaxEnt distribution steering and \schr~bridges}

Building on the KL reformulation in Proposition~\ref{prop:performance_decompose}, we show that the MaxEnt distribution steering problem is equivalent to an SB problem.

For $ \Omega := \bbR^n \times \bbU^{\ft} $, the deterministic system~\eqref{eq:system} uniquely determines the state trajectory $ x_{0:\ft} $ from each $ \omega = (x_0, u_{0:\ft-1}) \in \Omega$. Thus, $ x_k $ can be regarded as the resulting Borel measurable map $ x_k : \Omega \rightarrow \bbR^n $.  
Define
\begin{equation*}
    T : \Omega \ni \omega \mapsto x_{0:\ft} (\omega) \in (\bbR^n)^{\ft+1} .
\end{equation*}
To relate the KL reformulation to an SB, we transfer the reference state-input distribution $ \jref $ to the state-trajectory space. Pushing $ \jref $ forward through $ T $, define the reference path measure:
\begin{equation}
    \bar{\bbQ}_{0:\ft}  := T_{\#} \jref  . \label{eq:Qbar_def}
\end{equation}
This is the distribution of $ x_{0:\ft} $ driven by the policy $ \bar{\pi} $.
We analyze the SB with the reference \(\bar{\mathbb Q}_{0:\ft}\) using the standard static-to-dynamic reduction~\cite{Leonard2014}. 
We first solve the associated static SB and then reconstruct the dynamic SB from its optimal endpoint coupling and the reference conditional distribution of the intermediate states.

To this end, let $ \bar{\bbQ}_{0,\ft} := (x_0,x_\ft)_\# \jref $ denote the joint distribution of $ (x_0,x_\ft) $ under $ \jref $.
Denote by \(\Gamma(\mu_0,\mu_N)\) the set of couplings of \(\mu_0\) and \(\mu_N\), i.e.,
\begin{align*}
    \Gamma(\mu_0,\mu_\ft) &:= \bigl\{ \gamma \in \calP((\bbR^n)^2) : \gamma(S_0,\bbR^n) = \mu_0(S_0), \\
    &\qquad  \gamma(\bbR^n, S_\ft) = \mu_\ft (S_\ft), \ \forall S_0,S_N \in \scrB(\bbR^n)  \bigr\} .
\end{align*}
Then, we consider the following static SB problem with the reference endpoint distribution \(\bar{\mathbb Q}_{0,N}\).

\begin{problem}[Static SB]\label{prob:static_SB}
    Given a reference measure ${\mathbb{Q}}_{0,N} \in \calP((\bbR^{n})^{2})$ and marginals $ \mu_0, \mu_\ft \in \calP(\bbR^n) $, find a coupling $\gamma \in \calP((\bbR^{n})^{2}) $ that solves
\begin{align} 
    \underset{\gamma \in \Gamma (\mu_0,\mu_\ft)}{\textrm{\rm minimize}} \quad D_{\textrm{\rm KL}}\! \left(\gamma \middle\| {\mathbb{Q}}_{0,N}\right) . \label{eq:static_SB_objective}
\end{align}
\end{problem}
\vspace{0.2cm}

For the static SB problem to admit a solution with finite KL divergence, there must exist at least one coupling of \(\mu_0\) and \(\mu_N\) having finite KL divergence from the reference endpoint distribution. Therefore, we make the following assumption.
\begin{assumption}[Finite-KL coupling]\label{ass:finite_kl}
    There exists a coupling $ \gamma \in \Gamma(\mu_0,\mu_\ft) $ such that
    \begin{equation}\label{eq:finite_kl}
        \kl{\gamma}{\bar{\bbQ}_{0,\ft}} < \infty .
    \end{equation}
\end{assumption}
\vspace{0.2cm}

Under this assumption, the static SB problem has a finite minimum and a unique optimal coupling by a standard existence and uniqueness result for SBs, e.g., in \cite[Theorem~1.10]{Nutz2021introduction}.
\begin{proposition}[Existence and uniqueness]\label{prop:static_SB}
    Under Assumption~\ref{ass:finite_kl}, the static SB problem (Problem~\ref{prob:static_SB}) with $ \bbQ_{0,\ft} = \bar{\bbQ}_{0,\ft} $ has a unique minimizer $ \gamma^* $ and a finite minimum. 
\end{proposition}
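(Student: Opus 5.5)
The plan is the direct method of the calculus of variations on the convex set $\Gamma(\mu_0,\mu_\ft)$, followed by strict convexity for uniqueness. We do not need any structure of $\bar{\bbQ}_{0,\ft}$ beyond its being a probability measure on $(\bbR^n)^2$. This is the content of \cite[Theorem~1.10]{Nutz2021introduction}; we sketch the argument.

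\textbf{Finiteness and the minimizing sequence.} By Assumption~\ref{ass:finite_kl}, the value
\begin{equation*}
m := \inf_{\gamma \in \Gamma(\mu_0,\mu_\ft)} \kl{\gamma}{\bar{\bbQ}_{0,\ft}}
\end{equation*}
satisfies $0 \le m < \infty$, since the KL divergence is nonnegative. Take a minimizing sequence $\gamma_j \in \Gamma(\mu_0,\mu_\ft)$ with $\kl{\gamma_j}{\bar{\bbQ}_{0,\ft}} \to m$.

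\textbf{Tightness and closedness.} Each of $\mu_0$ and $\mu_\ft$ is a single Borel probability measure on the Polish space $\bbR^n$, hence tight. Given $\varepsilon>0$, choose compact sets $K_0, K_\ft$ with $\mu_0(K_0^c) < \varepsilon/2$ and $\mu_\ft(K_\ft^c) < \varepsilon/2$. Then every coupling $\gamma \in \Gamma(\mu_0,\mu_\ft)$ satisfies
\begin{equation*}
\gamma\bigl((K_0\times K_\ft)^c\bigr) \le \mu_0(K_0^c) + \mu_\ft(K_\ft^c) < \varepsilon .
\end{equation*}
So $\Gamma(\mu_0,\mu_\ft)$ is tight. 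It is also weakly closed, because the marginal maps are weakly continuous. By Prokhorov's theorem, a subsequence $\gamma_j$ converges weakly to some $\gamma^* \in \Gamma(\mu_0,\mu_\ft)$.

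\textbf{Lower semicontinuity.} This is the step that needs the most care. We use the Donsker--Varadhan representation
\begin{equation*}
\kl{\gamma}{\bbQ} = \sup_{g \in C_b} \Bigl\{ \int g \, \rmd\gamma - \log \int \Ee^{g} \, \rmd \bbQ \Bigr\} .
\end{equation*}
For fixed $\bbQ$, each map $\gamma \mapsto \int g\,\rmd\gamma$ with $g$ bounded continuous is weakly continuous, so the supremum is weakly lower semicontinuous in $\gamma$. Hence
\begin{equation*}
\kl{\gamma^*}{\bar{\bbQ}_{0,\ft}} \le \liminf_{j\to\infty} \kl{\gamma_j}{\bar{\bbQ}_{0,\ft}} = m .
\end{equation*}
Thus $\gamma^*$ attains the minimum, and the minimum value $m$ is finite.

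\textbf{Uniqueness.} Suppose $\gamma_1 \neq \gamma_2$ are both minimizers with finite value $m$. Both are then absolutely continuous with respect to $\bar{\bbQ}_{0,\ft}$, with densities $\rho_1, \rho_2$. Their average $(\gamma_1+\gamma_2)/2$ again lies in $\Gamma(\mu_0,\mu_\ft)$. The function $t \mapsto t\log t$ is strictly convex, and $\rho_1 \neq \rho_2$ on a set of positive $\bar{\bbQ}_{0,\ft}$-measure. Therefore
\begin{equation*}
\kl{\tfrac{\gamma_1+\gamma_2}{2}}{\bar{\bbQ}_{0,\ft}} < \tfrac12\,\kl{\gamma_1}{\bar{\bbQ}_{0,\ft}} + \tfrac12\,\kl{\gamma_2}{\bar{\bbQ}_{0,\ft}} = m ,
\end{equation*}
which contradicts the minimality of $m$. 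Hence the minimizer $\gamma^*$ is unique.
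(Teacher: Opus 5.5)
Your proof is correct and takes the same approach as the paper. The paper gives no separate argument; it invokes \cite[Theorem~1.10]{Nutz2021introduction}, whose proof is exactly your sketch: compactness of $\Gamma(\mu_0,\mu_\ft)$ via tightness and Prokhorov, weak lower semicontinuity of $\kl{\cdot}{\bar{\bbQ}_{0,\ft}}$ via Donsker--Varadhan, and strict convexity on the finite-entropy domain for uniqueness. The only detail you left implicit is that finite KL gives $\rho_i\log\rho_i\in L^1(\bar{\bbQ}_{0,\ft})$ (since $t\log t\ge -1/\Ee$), which justifies the strict inequality of the integrals; it holds.
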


By the above result, $ \gamma^* \ll \bar{\bbQ}_{0,\ft} $, and consequently, the following Radon--Nikodym derivative exists:
\begin{equation}\label{eq:G_def}
    G := \frac{\rmd \gamma^*}{\rmd \bar{\bbQ}_{0,\ft}} .
\end{equation}
The optimal coupling \(\gamma^*\) specifies how the endpoint distribution of the reference process is modified to satisfy the prescribed marginals. To recover the entire state trajectory, we retain the conditional distribution of the intermediate states under the reference path measure \(\bar{\mathbb Q}_{0:\ft}\), given the two endpoints.
Denote by $ \bar{\bbQ}_{1:\ft-1\mid 0,\ft}^{\x_0,\x_\ft} $ the conditional distribution of $ \{x_k\}_{k=1}^{\ft-1} $ given $ (x_0,x_\ft) = (\x_0, \x_\ft) $ under $ \bar{\bbQ}_{0:\ft} $, i.e., 
\begin{align}
&\bar{\bbQ}_{1:\ft-1\mid 0,\ft}^{\x_0,\x_\ft} (\rmd x_1,\ldots,\rmd x_{\ft-1})
\nonumber\\
&:=
\bar{\bbQ}_{0:\ft}
( \rmd x_1,\ldots,\rmd x_{\ft-1}
\mid x_0=\x_0,\ x_\ft=\x_\ft
).
\end{align}
The dynamic SB can be reconstructed from the static SB and $ \bar{\bbQ}_{1:\ft-1\mid 0,\ft}^{\x_0,\x_\ft} $ as follows.
\begin{lemma}[Static-to-dynamic reduction of SB]\label{lem:static-to-dynamic}
    Under Assumption~\ref{ass:finite_kl}, the dynamic SB problem (Problem~\ref{prob:SB}) with $ \bbQ_{0:\ft} = \bar{\bbQ}_{0:\ft} $ has the unique minimizer
    \begin{equation}\label{eq:PSB_def}
    \bbP_{0:\ft}^{\rm SB} (\rmd x_{0:\ft}) := \bar{\bbQ}_{1:\ft-1 | 0,\ft}^{x_0,x_\ft} (\rmd x_{1:\ft-1}) \gamma^* (\rmd x_0, \rmd x_\ft) .
    \end{equation}
    Moreover, we have
    \begin{equation}\label{eq:PSB_Qbar}
        \frac{\rmd\bbP^{\rm SB}_{0:\ft}}
            {\rmd\bar{\bbQ}_{0:\ft}} (x_{0:\ft})
        =G(x_0,x_\ft), \quad
\bar{\bbQ}_{0:\ft}\text{-a.e.} 
    \end{equation}
\end{lemma}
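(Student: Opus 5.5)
The plan is to use the chain rule (additivity) of the KL divergence under disintegration with respect to the endpoint map $(T_0,T_\ft)$. Since $(\bbR^n)^{\ft+1}$ is Polish, both any candidate $\bbP_{0:\ft}$ and the reference $\bar{\bbQ}_{0:\ft}$ disintegrate along $(x_0,x_\ft)$. Write $\bbP_{0,\ft}$ for the endpoint marginal of $\bbP_{0:\ft}$ and $\bbP^{\x_0,\x_\ft}_{1:\ft-1\mid 0,\ft}$ for its conditional. The chain rule then gives
\begin{align*}
\kl{\bbP_{0:\ft}}{\bar{\bbQ}_{0:\ft}} = \kl{\bbP_{0,\ft}}{\bar{\bbQ}_{0,\ft}} + \int \kl{\bbP^{\x_0,\x_\ft}_{1:\ft-1\mid 0,\ft}}{\bar{\bbQ}^{\x_0,\x_\ft}_{1:\ft-1\mid 0,\ft}}\,\bbP_{0,\ft}(\rmd \x_0,\rmd \x_\ft),
\end{align*}
with the convention that both sides may be $+\infty$. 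I would either cite this standard fact (e.g., from \cite{Leonard2014}) or justify it briefly via the density factorization whenever $\bbP_{0:\ft}\ll\bar{\bbQ}_{0:\ft}$. If absolute continuity fails, both sides are infinite, which should be noted separately.

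With this identity, the marginal constraints of Problem~\ref{prob:SB} restrict only the first term: they require exactly $\bbP_{0,\ft}\in\Gamma(\mu_0,\mu_\ft)$. The second term is nonnegative and vanishes if and only if the conditionals coincide with the reference conditionals for $\bbP_{0,\ft}$-a.e. endpoint pair. Hence
\[
\kl{\bbP_{0:\ft}}{\bar{\bbQ}_{0:\ft}} \ge \kl{\bbP_{0,\ft}}{\bar{\bbQ}_{0,\ft}} \ge \kl{\gamma^*}{\bar{\bbQ}_{0,\ft}},
\]
by Proposition~\ref{prop:static_SB}. The candidate $\bbP^{\rm SB}_{0:\ft}$ in \eqref{eq:PSB_def} is feasible because its endpoint marginal is $\gamma^*$, and it attains both equalities, so it is optimal with finite value.

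For uniqueness, I will analyze any minimizer $\bbP_{0:\ft}$. Equality in the first inequality forces its conditionals to equal the reference ones a.e. Equality in the second forces $\bbP_{0,\ft}=\gamma^*$ by the uniqueness part of Proposition~\ref{prop:static_SB}. Together these give $\bbP_{0:\ft}=\bbP^{\rm SB}_{0:\ft}$.

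For \eqref{eq:PSB_Qbar}, I will show directly that $G(x_0,x_\ft)\,\bar{\bbQ}_{0:\ft}(\rmd x_{0:\ft})$ equals $\bbP^{\rm SB}_{0:\ft}$. Disintegrate $\bar{\bbQ}_{0:\ft}(\rmd x_{0:\ft}) = \bar{\bbQ}^{x_0,x_\ft}_{1:\ft-1\mid 0,\ft}(\rmd x_{1:\ft-1})\,\bar{\bbQ}_{0,\ft}(\rmd x_0,\rmd x_\ft)$. Multiplying by $G$ turns $\bar{\bbQ}_{0,\ft}$ into $\gamma^*$ by \eqref{eq:G_def}, which yields exactly \eqref{eq:PSB_def}. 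Testing against bounded measurable functions makes this rigorous.

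The main obstacle is the measure-theoretic justification of the chain rule, in particular the measurability of the map $(\x_0,\x_\ft)\mapsto \kl{\cdot}{\cdot}$ of the conditionals. I will handle it by citing the standard disintegration result for KL on Polish spaces rather than reproving it.
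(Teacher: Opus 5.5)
Your proposal is correct and takes essentially the same approach as the paper. The paper cites \cite[Proposition~2.3]{Leonard2014} for existence and uniqueness of the dynamic minimizer, and that result is proved by exactly the KL chain rule under endpoint disintegration that you spell out; you also correctly use finiteness of $\kl{\gamma^*}{\bar{\bbQ}_{0,\ft}}$ to force both equalities in the uniqueness step, and your proof of \eqref{eq:PSB_Qbar} (substituting $\gamma^* = G\,\bar{\bbQ}_{0,\ft}$ into \eqref{eq:PSB_def} and recognizing the disintegration of $\bar{\bbQ}_{0:\ft}$) is identical to the paper's.
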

\vspace{0.2cm}
\begin{proof}
Since the static SB problem admits the unique minimizer, the dynamic SB problem (Problem~\ref{prob:SB}) has the unique minimizer~$ \bbP_{0:\ft}^{\rm SB} $ given by \eqref{eq:PSB_def} \cite[Proposition~2.3]{Leonard2014}.

To prove \eqref{eq:PSB_Qbar}, recall from \eqref{eq:G_def} that
\[
\gamma^*(\rmd x_0,\rmd x_N)
=
G(x_0,x_N)\bar{\bbQ}_{0,N}(\rmd x_0,\rmd x_N).
\]
Substituting this into \eqref{eq:PSB_def}, we obtain
\[
\begin{aligned}
&\bbP^{\mathrm{SB}}_{0:\ft}(\rmd x_{0:\ft})\\
&=G(x_0,x_N)
\bar{\bbQ}^{x_0,x_N}_{1:\ft-1 | 0,N}(\rmd x_{1:\ft-1})
\bar{\bbQ}_{0,N}(\rmd x_0,\rmd x_N)\\
&=
G(x_0,x_N)\bar{\bbQ}_{0:\ft}(\rmd x_{0:\ft}) ,
\end{aligned}
\]
which shows \eqref{eq:PSB_Qbar}. This completes the proof.
\end{proof}

Lemma~\ref{lem:static-to-dynamic} shows that the dynamic SB is obtained by weighting the reference path measure $ \bar{\bbQ}_{0:\ft} $ by \(G(x_0,x_N)\). Since \(\bar{\mathbb Q}_{0:N}=T_\# \jref \), we lift this weighting to the state-input space $ \Omega $ through the map $ T $ and define
\begin{equation}\label{eq:Pstar_def}
    \bfP^* (\rmd \omega) := G(x_0(\omega), x_\ft(\omega)) \jref (\rmd \omega) .
\end{equation}
The next result shows that this weighted state--input distribution induces the dynamic SB.
\begin{lemma}[State-input realization of the SB]\label{lem:lifted}
    Under Assumption~\ref{ass:finite_kl}, $ \bfP^* $ satisfies
    \begin{equation}\label{eq:Pstar_PSB}
        T_{\#} \bfP^* = \bbP_{0:\ft}^{\rm SB} .
    \end{equation}
    Moreover, there exists a causal policy $ \pi $ supported on $ \bbU $ that generates $ \bfP^* $.
\end{lemma}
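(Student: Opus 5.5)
The proof has two parts: a pushforward identity for \eqref{eq:Pstar_PSB}, and a disintegration argument for the policy. I will treat them in that order.

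\emph{Pushforward identity.} First, $\bfP^*$ is a probability measure. Since $x_0(\omega)$ and $x_\ft(\omega)$ are the endpoint coordinates of $T(\omega)$, we have $G(x_0(\omega),x_\ft(\omega)) = (G\circ(T_0,T_\ft))(T(\omega))$. For any bounded Borel $\varphi$ on $(\bbR^n)^{\ft+1}$, the change-of-variables formula for pushforwards gives
\[
\int \varphi \,\rmd (T_\#\bfP^*) = \int_\Omega \varphi(T(\omega))\, G(x_0(\omega),x_\ft(\omega))\, \jref(\rmd\omega) = \int \varphi(x_{0:\ft})\, G(x_0,x_\ft)\, \bar{\bbQ}_{0:\ft}(\rmd x_{0:\ft}),
\]
using $\bar{\bbQ}_{0:\ft}=T_\#\jref$. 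By \eqref{eq:PSB_Qbar} in Lemma~\ref{lem:static-to-dynamic}, the right-hand side equals $\int\varphi\,\rmd\bbP^{\rm SB}_{0:\ft}$. Taking $\varphi\equiv 1$ gives $\bfP^*(\Omega)=\gamma^*((\bbR^n)^2)=1$, so $\bfP^*$ is indeed a probability measure, and \eqref{eq:Pstar_PSB} follows.

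\emph{Causal policy.} The key observation is that the $x_0$-marginal of $\bfP^*$ is $\mu_0$. Indeed, pushing $\bfP^*$ forward by $x_0$ gives the first marginal of $G\,\bar{\bbQ}_{0,\ft}=\gamma^*$, which is $\mu_0$ because $\gamma^*\in\Gamma(\mu_0,\mu_\ft)$. Next I disintegrate $\bfP^*$ sequentially along the coordinates $(x_0,u_0,\dots,u_{\ft-1})$. The space $\Omega=\bbR^n\times\bbU^\ft$ is a Borel subset of a Euclidean space, hence standard Borel, so regular conditional distributions exist. This yields stochastic kernels $\kappa_k(\rmd u_k\mid x_0,u_{0:k-1})$ with
\[
\bfP^*(\rmd x_0,\rmd u_{0:\ft-1}) = \mu_0(\rmd x_0)\prod_{k=0}^{\ft-1}\kappa_k(\rmd u_k\mid x_0,u_{0:k-1}).
\]
Because $\bfP^*\ll\jref$ and $\jref$ is concentrated on $\bbU^\ft$, each kernel may be modified on a null set so that it is concentrated on $\bbU$. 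Since the system is deterministic, $x_1,\dots,x_k$ are Borel functions of $(x_0,u_{0:k-1})$. The history $h_k$ therefore carries the same information as $(x_0,u_{0:k-1})$, and I set $\pi_k(\cdot\mid h_k):=\kappa_k(\cdot\mid x_0,u_{0:k-1})$. This is a Borel kernel in $h_k$, since it does not depend on the redundant state entries. Comparing with \eqref{eq:Ppi_def} then gives $\bfP^\pi=\bfP^*$.

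The main obstacle is measure-theoretic: ensuring the existence and Borel measurability of the disintegration kernels, and that they can be chosen supported on $\bbU$. I would handle it by invoking the standard disintegration theorem on Polish or standard Borel spaces, and by redefining the kernels to equal $R_k$ on the exceptional null sets.
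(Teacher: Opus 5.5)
Your proof is correct and follows the same route as the paper: change of variables through $T$, using $\bar{\bbQ}_{0:\ft}=T_\#\jref$ and \eqref{eq:PSB_Qbar}, and then disintegration of $\bfP^*$ to obtain the causal policy. Your extra checks fill in details the paper leaves implicit: that $\bfP^*$ is a probability measure with $x_0$-marginal $\mu_0$, and that the kernels can be taken Borel in $h_k$ and concentrated on $\bbU$.
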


\begin{proof}
    For any bounded Borel function $ \varphi : (\bbR^n)^{\ft+1} \rightarrow \bbR $, \eqref{eq:Qbar_def}, \eqref{eq:PSB_Qbar}, and the definition~\eqref{eq:Pstar_def} give
    \begin{align*}
        \int \varphi \rmd (T_{\#} \bfP^*) &= \int \varphi(T(\omega)) \bfP^* (\rmd \omega) \\
        &= \int \varphi (T(\omega)) G(x_0 (\omega), x_\ft (\omega)) \jref (\rmd \omega) \\
        &= \int \varphi (x_{0:\ft}) G(x_0,x_\ft) \bar{\bbQ}_{0:\ft} (\rmd x_{0:\ft})  \\
        &= \int \varphi \rmd \bbP_{0:\ft}^{\rm SB} ,
    \end{align*}
    which proves \eqref{eq:Pstar_PSB}. The existence of a policy $ \pi $ that generates $ \bfP^* $ is guaranteed by the disintegration theorem~\cite[Theorem~8.5]{Kallenberg1997foundations}.
    Since $ \bfP^* $ is supported on $ \bbR^n \times \bbU^{\ft} $, the resulting policy is supported on $ \bbU $.
    This completes the proof.
\end{proof}

Denote by $ \bbP_{k}^\pi \in \calP(\bbR^n) $ the distribution of $ x_k $ under a policy $ \pi $.
The next result shows that a causal policy can be Markovized without changing the state marginals or increasing the objective.
The proof is deferred to Appendix~\ref{app:markov}.
\begin{proposition}[Markovization]\label{prop:markov}
    Let $ \pi $ be any causal policy supported on $ \bbU $ and define the Markov policy
    \begin{equation}
        (\sfM \pi)_k ( U \mid x ) := \bfP^\pi ( u_k \in U \mid x_k = x ) , \quad U \in \scrB(\bbU) .
    \end{equation} 
    Then, for any $ k \in \bbra{0,\ft} $, it holds that
    \begin{equation}\label{eq:state_law_equal}
        \bbP_{k}^{\sfM \pi} = \bbP_{k}^\pi . 
    \end{equation}
    Moreover, if $J(\pi)<\infty$, we have
    \begin{equation}\label{eq:J_pi_Mpi}
        J(\pi) = J(\sfM \pi) + \sum_{k=0}^{\ft-1} \bbE^{\pi} [ \kl{\pi_k (\cdot \mid h_k)}{ (\sfM \pi)_k (\cdot \mid x_k) } ] .
    \end{equation}
\end{proposition}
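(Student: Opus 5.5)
We prove \eqref{eq:state_law_equal} by forward induction on $k$, and then obtain \eqref{eq:J_pi_Mpi} from the chain rule for KL divergence, applied stage by stage.

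\emph{Step 1 (marginals).} For $k=0$ both laws equal $\mu_0$. Suppose $\bbP_k^{\sfM\pi}=\bbP_k^\pi$. Under $\pi$, the joint law of $(x_k,u_k)$ is $\bbP_k^\pi(\rmd x)\,(\sfM\pi)_k(\rmd u\mid x)$; this is exactly the disintegration defining $\sfM\pi$. The regular conditional kernel exists because $\bbU\subseteq\bbR^m$ is Borel, so \cite[Theorem~8.5]{Kallenberg1997foundations} applies. Under $\sfM\pi$, the pair $(x_k,u_k)$ has law $\bbP_k^{\sfM\pi}(\rmd x)\,(\sfM\pi)_k(\rmd u\mid x)$, which coincides with the previous one by the induction hypothesis. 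Since $x_{k+1}=f_k(x_k,u_k)$ is a fixed measurable map of $(x_k,u_k)$, we get $\bbP_{k+1}^{\sfM\pi}=(f_k)_\#(\text{same law})=\bbP_{k+1}^\pi$. As a consequence, $\bbE^{\pi}[q_k(x_k)]=\bbE^{\sfM\pi}[q_k(x_k)]$ for every $k$.

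\emph{Step 2 (KL terms).} Fix $k$ and write $\pi_k^h:=\pi_k(\cdot\mid h_k)$ and $m_k^x:=(\sfM\pi)_k(\cdot\mid x)$. We aim for the pointwise-in-$x$ identity
\begin{equation*}
\bbE^\pi\!\left[\kl{\pi_k^{h}}{R_k}\,\middle|\,x_k=x\right]=\kl{m_k^x}{R_k}+\bbE^\pi\!\left[\kl{\pi_k^{h}}{m_k^x}\,\middle|\,x_k=x\right].
\end{equation*}
This is the chain rule applied to the joint law of $(h_k,u_k)$ given $x_k=x$, whose $u$-marginal is $m_k^x$, compared against the product of the conditional law of $h_k$ with $R_k$. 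Equivalently, it is the ``compensation identity''. When the quantities involved are finite, it is the computation
\begin{equation*}
\int \log\frac{\rmd\pi_k^h}{\rmd R_k}\,\rmd\pi_k^h=\int\log\frac{\rmd\pi_k^h}{\rmd m_k^x}\,\rmd\pi_k^h+\int\log\frac{\rmd m_k^x}{\rmd R_k}\,\rmd\pi_k^h,
\end{equation*}
after averaging the last term over $h_k$ given $x_k=x$; that average turns $\rmd\pi_k^h$ into $\rmd m_k^x$. We then integrate against $\bbP_k^\pi=\bbP_k^{\sfM\pi}$ (Step 1). The first term on the right becomes the $k$-th KL term of $J(\sfM\pi)$, and the second becomes $\bbE^\pi[\kl{\pi_k^h}{m_k^x}]$. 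Summing over $k$ and adding the equal state costs gives \eqref{eq:J_pi_Mpi}.

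\emph{Main obstacle: measure-theoretic rigor.} One must justify that $\pi_k^h\ll m_k^{x_k}$ almost surely, and that splitting the logarithm does not produce an expression of the form $\infty-\infty$. The plan is as follows. The assumption $J(\pi)<\infty$ gives $\pi_k^h\ll R_k$ a.s. with finite expected KL. Then $m_k^x$ is a mixture of the measures $\pi_k^h$, so $m_k^x\ll R_k$. Writing $p_h=\rmd\pi_k^h/\rmd R_k$, the density of $m_k^x$ is $\bar p_x=\bbE[p_h\mid x_k=x]$. The set $\{\bar p_x=0\}$ is null for almost every $p_h$, which gives the needed absolute continuity. For finiteness, $\kl{m_k^x}{R_k}\le\bbE[\kl{\pi_k^h}{R_k}\mid x_k=x]$ by convexity of KL (Jensen). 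So the left-hand side dominates both nonnegative terms on the right, and all three are finite a.e. To avoid integrability issues with $\log\bar p_x$, we apply the identity $p\log p=p\log(p/\bar p)+p\log\bar p$ with the convention $0\log 0=0$. Since $\int p\log(p/\bar p)\,\rmd R_k\ge 0$ and $\int \bar p\log\bar p\,\rmd R_k\ge0$, Tonelli's theorem (applied to the positive parts, together with the already established finiteness) legitimizes the interchange of conditional expectation and integration.
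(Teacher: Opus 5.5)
Your proof is correct and follows essentially the same route as the paper. It uses forward induction on the state marginals, then the compensation (chain-rule) identity that splits $\bbE^\pi[\kl{\pi_k(\cdot\mid h_k)}{R_k}]$ into $\bbE^\pi[\kl{\pi_k(\cdot\mid h_k)}{(\sfM\pi)_k(\cdot\mid x_k)}]$ plus $\int\kl{(\sfM\pi)_k(\cdot\mid x)}{R_k}\,\bbP_k^\pi(\rmd x)$.

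The only difference is in how that identity is obtained. The paper cites the chain rule for conditional KL divergence, applied to the law of $(u_k,x_k,z_k)$ against $R_k\otimes P^\pi_{x_k,z_k}$, under which $z_k\to x_k\to u_k$ is Markov. You verify it by hand via densities. In your Tonelli step, the negative parts should be controlled by the pointwise bounds $p\log(p/\bar p)\ge p-\bar p$ and $t\log t\ge -1/\Ee$. The nonnegativity of the integrated quantities, which you invoke, does not license that step.
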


Consequently, $ J(\sfM \pi) \le J(\pi) $, and the Markovized policy satisfies the same endpoint constraints whenever $ \pi $ does. 
The equality $ J(\sfM \pi) = J(\pi) $ holds if and only if
\begin{equation}\label{eq:markov_opt}
    \pi_k (\cdot \mid h_k) = (\sfM \pi)_k (\cdot \mid x_k) \quad \bfP^\pi\text{-a.s.}
\end{equation}
for any $ k \in \bbra{0,\ft-1} $. Therefore, the infimum of $ J $ over all causal policies is equal to its infimum over Markov policies, and every optimal causal policy with finite objective value is Markov almost surely in the sense of \eqref{eq:markov_opt}.

We are now ready to combine Propositions~\ref{prop:performance_decompose}--\ref{prop:markov} and Lemmas~\ref{lem:static-to-dynamic} and \ref{lem:lifted}.
The following theorem establishes the equivalence between the MaxEnt distribution steering problem and the SB problem with the reference \(\bar{\mathbb Q}_{0:\ft}\).
\begin{theorem}[MaxEnt--SB equivalence]\label{thm:maxent_solves_SB}
    Suppose that Assumptions~\ref{ass:cost}--\ref{ass:finite_kl} hold. 
    Then, $ \bfP^* $ in \eqref{eq:Pstar_def} is the unique optimal state-input distribution for Problem~\ref{prob:MaxEnt}, and there exists a Markov policy $ \pi^* $ supported on $ \bbU $ that generates $ \bfP^* $.
    Consequently, $ \pi^* $ is an optimal policy of Problem~\ref{prob:MaxEnt}.
    Let $ \bbP_{0:\ft}^* $ be the distribution of $ x_{0:\ft} $ under the policy $ \pi^* $. Then, $ \bbP_{0:\ft}^* $ is the unique optimal solution of Problem~\ref{prob:SB} with $ \bbQ_{0:\ft} = \bar{\bbQ}_{0:\ft} $ in \eqref{eq:Qbar_def}, i.e.,
    \begin{equation}
    \bbP_{0:\ft}^* = \bbP_{0:\ft}^{\rm SB} .
    \end{equation}
    Moreover, we have
    \begin{equation}\label{eq:SB_finite}
    \kl{\bbP_{0:\ft}^*}{\bar{\bbQ}_{0:\ft}} = \kl{\bfP^*}{\jref} < \infty.
    \end{equation}
\end{theorem}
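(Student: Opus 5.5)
The plan is to reduce Problem~\ref{prob:MaxEnt} to a KL minimization over state-input distributions via Proposition~\ref{prop:performance_decompose}. We then compare with the SB through the pushforward $T$, and Markovize the resulting optimizer with Proposition~\ref{prop:markov}.

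\emph{Step 1 (lower bound).} By \eqref{eq:performance_decompose}, for any feasible causal $\pi$ we have $J(\pi)=\int V_0\,\rmd\mu_0+\kl{\bfP^\pi}{\jref}$, where $\int V_0\,\rmd\mu_0$ is a finite constant by Assumption~\ref{ass:finite_value_func}. Since $T$ is measurable, the data-processing inequality gives
\[
\kl{\bfP^\pi}{\jref}\ge \kl{T_\#\bfP^\pi}{\bar{\bbQ}_{0:\ft}} .
\]
Feasibility means that $T_\#\bfP^\pi$ has endpoint marginals $\mu_0,\mu_\ft$. Hence the right-hand side is at least the SB minimum $\kl{\bbP^{\rm SB}_{0:\ft}}{\bar{\bbQ}_{0:\ft}}$, which is finite by Proposition~\ref{prop:static_SB} and Lemma~\ref{lem:static-to-dynamic}.

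\emph{Step 2 (attainment).} By Lemma~\ref{lem:lifted}, $\bfP^*$ is generated by a causal policy supported on $\bbU$, and $T_\#\bfP^*=\bbP^{\rm SB}_{0:\ft}$. This policy is therefore feasible, since $(x_0,x_\ft)_\#\bfP^*=\gamma^*\in\Gamma(\mu_0,\mu_\ft)$. Its density with respect to $\jref$ is $G\circ(x_0,x_\ft)$, which is $\sigma(T)$-measurable. Consequently,
\[
\kl{\bfP^*}{\jref}=\int G\log G\,\rmd\bar{\bbQ}_{0,\ft}=\kl{\gamma^*}{\bar{\bbQ}_{0,\ft}}=\kl{\bbP^{\rm SB}_{0:\ft}}{\bar{\bbQ}_{0:\ft}}<\infty .
\]
The last equality uses \eqref{eq:PSB_Qbar}. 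So the lower bound of Step 1 is attained, $\bfP^*$ is optimal, and \eqref{eq:SB_finite} follows once we identify $\bbP^*_{0:\ft}=T_\#\bfP^*$.

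\emph{Step 3 (uniqueness).} Let $\bfP^\pi$ be any optimal state-input distribution. Then both inequalities in Step 1 are equalities. Uniqueness of the SB (Lemma~\ref{lem:static-to-dynamic}) gives $T_\#\bfP^\pi=\bbP^{\rm SB}_{0:\ft}$. Equality in data processing, with finite value, forces $\rmd\bfP^\pi/\rmd\jref$ to be $\sigma(T)$-measurable. More precisely, it must equal the conditional expectation of the density given $T$, which is $(\rmd T_\#\bfP^\pi/\rmd\bar{\bbQ}_{0:\ft})\circ T=G\circ(x_0,x_\ft)$. Hence $\bfP^\pi=\bfP^*$.

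I expect this equality case to be the main technical step. I would handle it via the chain rule
\[
\kl{\bfP}{\jref}=\kl{T_\#\bfP}{T_\#\jref}+\int \kl{\bfP(\cdot\mid T)}{\jref(\cdot\mid T)}\,\rmd T_\#\bfP ,
\]
using regular conditional distributions on the standard Borel space $\Omega$. Equality then forces $\bfP(\cdot\mid T)=\jref(\cdot\mid T)$ almost surely, which with $T_\#\bfP=\bbP^{\rm SB}_{0:\ft}$ pins down $\bfP$.

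\emph{Step 4 (Markov policy).} Let $\pi$ generate $\bfP^*$ as in Lemma~\ref{lem:lifted}. Proposition~\ref{prop:markov} shows that $\pi^*:=\sfM\pi$ preserves the state marginals, in particular the endpoint constraints, and satisfies $J(\sfM\pi)\le J(\pi)$. Thus $\pi^*$ is feasible and optimal, and by Step 3 its state-input distribution equals $\bfP^*$. Then $\bbP^*_{0:\ft}=T_\#\bfP^*=\bbP^{\rm SB}_{0:\ft}$, and \eqref{eq:SB_finite} follows from Step 2. Support on $\bbU$ is inherited, since $\bfP^*$ lives on $\bbR^n\times\bbU^\ft$.
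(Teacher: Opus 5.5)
Your proof is correct. Steps 1, 2 and 4 match the paper's argument: the lower bound via Proposition~\ref{prop:performance_decompose} and data processing, attainment by the lifted measure of Lemma~\ref{lem:lifted}, and Markovization via Proposition~\ref{prop:markov}. In Step 4, $J(\pi)<\infty$ holds by Step 2, as Proposition~\ref{prop:markov} requires.

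The genuine difference is Step 3. The paper proves uniqueness by showing that the feasible set $\calF$ is convex, since mixtures of causally generated state-input distributions are again causally generated by sequential disintegration. It then invokes strict convexity of $\kl{\cdot}{\jref}$ on its finite domain. It never describes an arbitrary optimizer directly; it concludes the unique one must be $\bfP^*$ because $\bfP^*$ is already known to be optimal.

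You instead use the equality case of data processing. The chain rule along $T$ forces $\bfP^\pi(\cdot\mid T)=\jref(\cdot\mid T)$ almost surely, and uniqueness of the dynamic SB forces $T_\#\bfP^\pi=\bbP^{\rm SB}_{0:\ft}$. Hence every optimizer equals $\int \jref(\cdot\mid T=t)\,\bbP^{\rm SB}_{0:\ft}(\rmd t)$. This is $\bfP^*$, because its density $G\circ(x_0,x_\ft)$ with respect to $\jref$ is $\sigma(T)$-measurable.

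Your route identifies the optimizer explicitly and shows that the optimal conditional law of the inputs given the state path is inherited unchanged from the reference. It also avoids the mixture-closure argument for $\calF$. The price is that it needs regular conditional distributions given $T$ on $\Omega$, which exist here since $\bbU$ is Borel and $\Omega$ is standard Borel. It also leans on the SB uniqueness in Lemma~\ref{lem:static-to-dynamic}. The paper's argument needs only strict convexity of KL and the convexity of $\calF$.
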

\vspace{0.2cm}

\begin{proof}
    For any feasible causal policy $ \pi $ of Problem~\ref{prob:MaxEnt}, Proposition~\ref{prop:performance_decompose} and the data-processing inequality~\cite[Theorem 9 and Example 2]{Van2014renyi} give
    \begin{align}
        J(\pi) &=  \int V_0 \rmd \mu_0 + \kl{\bfP^\pi}{\jref} \nonumber\\
        &\ge \int V_0 \rmd \mu_0 + \kl{T_\# \bfP^\pi}{\bar{\bbQ}_{0:\ft}} \nonumber\\
        &\ge \int V_0 \rmd \mu_0 + \kl{\bbP_{0:\ft}^{\rm SB}}{\bar{\bbQ}_{0:\ft}} , \label{eq:J_lower_bound}
    \end{align}
    where the last inequality follows from the optimality of $\bbP^{\rm SB}_{0:N}$ for the SB problem.
    Recall that the existence of $\bbP^{\rm SB}_{0:N}$ is guaranteed in Lemma~\ref{lem:static-to-dynamic}.

    Next, we show that the above lower bound is attained by $ \pi $ satisfying $ \bfP^\pi = \bfP^* $.
Since $T(\omega)=(x_0(\omega),\ldots,x_\ft(\omega))$, the definition \eqref{eq:Pstar_def} of $ \bfP^* $ and \eqref{eq:PSB_Qbar} imply
\begin{equation}
    \frac{\rmd\mathbf P^*}{\rmd\jref}
    =
    \left(
    \frac{\rmd\bbP^{\mathrm{SB}}_{0:\ft}}{\rmd\bar{\bbQ}_{0:\ft}}
    \right)\circ T,
    \qquad \jref\text{-a.s.} \label{eq:Pstar_Pbar_PSB_Qbar}
\end{equation}
Moreover, by Lemma~\ref{lem:lifted}, we have
\begin{align}
\kl{\mathbf P^*}{\jref}
&=
\int
\log\!\left(
    \frac{\rmd\bbP^{\mathrm{SB}}_{0:\ft}}
         {\rmd\bar{\bbQ}_{0:N}}\circ T
\right)
\rmd\mathbf P^* \nonumber\\
&=
\int
\log\!\left(
    \frac{\rmd\bbP^{\mathrm{SB}}_{0:\ft}}
         {\rmd\bar{\bbQ}_{0:\ft}}
\right)
\rmd(T_\# \mathbf P^*) \nonumber\\
&=
\kl{\bbP^{\mathrm{SB}}_{0:\ft}}{\bar{\bbQ}_{0:\ft}}. \label{eq:kl_equal}
\end{align}
Hence, the lower bound \eqref{eq:J_lower_bound} is attained with equality
for $ \bfP^\pi =  \mathbf P^*$.

By Lemma~\ref{lem:lifted}, there exists a causal
policy $\pi^\circ $ that is feasible for Problem~\ref{prob:MaxEnt} and induces the \red{state-input} distribution
$\mathbf P^*$.  Applying Proposition~\ref{prop:performance_decompose}
to $\pi^\circ $ and using \eqref{eq:kl_equal} yields
\[
\begin{aligned}
J(\pi^\circ )
&=
\int_{\bbR^n}V_0 \rmd\mu_0
+\kl{\mathbf P^*}{\jref}\\
&=
\int_{\bbR^n}V_0\rmd\mu_0
+\kl{\bbP^{\mathrm{SB}}_{0:N}}{\bar{\bbQ}_{0:N}}.
\end{aligned}
\]
Thus, $\pi^\circ $ attains the lower bound \eqref{eq:J_lower_bound} satisfied by any feasible
causal policy, and is therefore optimal for Problem~\ref{prob:MaxEnt}.  In particular,
$\mathbf P^*$ is an optimal \red{state-input} distribution, and $ \bbP_{0:\ft}^* = T_\# \bfP^*  = \bbP_{0:\ft}^{\rm SB} $.

    It remains to prove the uniqueness of $\bfP^*$.
    Let
\begin{align*}
    \mathcal F
    :=
    \{
        \bfP^\pi \in\mathcal P(\Omega) &: \pi \textrm{ causal}, \\
        &
        (x_0)_\#\bfP^\pi=\mu_0,\;
        (x_\ft)_\#\bfP^\pi =\mu_\ft
    \}.
\end{align*}
Let $\bfP_1, \bfP_2 \in \calF$ with finite objective values, and let
$\theta\in(0,1)$. Define
\[
    \bfP_\theta
    :=
    \theta\bfP_1+(1-\theta)\bfP_2.
\]
Since $\bfP_1$ and $\bfP_2$ have the initial marginal $\mu_0$ and
$(x_\ft)_\#\bfP_1=(x_\ft)_\#\bfP_2=\mu_\ft$, linearity of
pushforwards with respect to measures gives $ (x_0)_\# \bfP_\theta = \mu_0$ and $ (x_N)_\# \bfP_\theta = \mu_N$.
Moreover, sequential disintegration yields stochastic kernels
$\{\pi_k^\theta\}_{k=0}^{\ft-1}$ such that
\[
    \bfP_\theta(\rmd x_0,\rmd u_{0:\ft-1})
    =
    \mu_0(\rmd x_0)
    \prod_{k=0}^{\ft-1}
    \pi_k^\theta(\rmd u_k\mid h_k).
\]
Hence, $\bfP_\theta$ is induced by a causal policy and $\bfP_\theta \in \calF $.
Thus, $ \calF $ is convex.

Suppose that $\bfP_1,\bfP_2 \in \calF$ are distinct optimal
state-input distributions for Problem~\ref{prob:MaxEnt}.
By Proposition~\ref{prop:performance_decompose}, they both minimize
$\kl{\cdot}{\jref}$ over the feasible set, and their
KL divergences are finite.
Since $\kl{\cdot}{\jref}$ is strictly convex on its
finite-valued domain, for $ \theta \in (0,1) $,
\[
    D_{\textrm{KL}}(\bfP_\theta\Vert\jref)
    <
    \theta D_{\textrm{KL}}(\bfP_1\Vert\jref)
    +(1-\theta)D_{\textrm{KL}}(\bfP_2\Vert\jref),
\]
which contradicts the optimality of $\bfP_1$ and $\bfP_2$.
Therefore, the optimal state-input distribution is unique,
and hence it must be $\bfP^*$.

Finally, by Proposition~\ref{prop:markov}, the Markovized policy $ \pi^* := \sfM \pi^\circ $ satisfies the endpoint constraints~\eqref{eq:cov_constraint} and cannot increase the objective $ J $. Therefore, $ \pi^* $ is also optimal.
By the uniqueness of $ \bfP^* $, we have $ \bfP^{\pi^*} = \bfP^* $.
The finiteness of the SB objective~\eqref{eq:SB_finite} follows from Proposition~\ref{prop:static_SB} and \eqref{eq:kl_equal}. 
This completes the proof.
\end{proof}

Theorem~\ref{thm:maxent_solves_SB} assumes the existence of a finite-KL coupling (Assumption~\ref{ass:finite_kl}). The next corollary shows that Assumption~\ref{ass:finite_kl} is not merely a sufficient condition, but is equivalent to the existence of a feasible causal policy with finite objective value.

\begin{corollary}[Finite-cost feasibility]\label{cor:finite_cost}
    Under Assumptions~\ref{ass:cost} and \ref{ass:finite_value_func}, the following statements are equivalent:
    \begin{itemize}
        \item[(a)] there exists a causal policy $ \pi $ supported on $ \bbU $ satisfying the endpoint constraints~\eqref{eq:cov_constraint} and $ J(\pi) < \infty $;
        \item[(b)] Assumption~\ref{ass:finite_kl} holds. 
    \end{itemize}
\end{corollary}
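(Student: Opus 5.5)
We prove the two implications separately. Both rest on the KL identity of Proposition~\ref{prop:performance_decompose}, which applies because Assumptions~\ref{ass:cost} and \ref{ass:finite_value_func} are in force.

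\emph{(a) $\Rightarrow$ (b).} Let $\pi$ be a causal policy supported on $\bbU$ that satisfies \eqref{eq:cov_constraint} and has $J(\pi)<\infty$. Proposition~\ref{prop:performance_decompose} gives
\[
\kl{\bfP^\pi}{\jref}=J(\pi)-\int V_0\,\rmd\mu_0 .
\]
The right-hand side is finite, because $J(\pi)<\infty$ and $\int V_0\,\rmd\mu_0<\infty$ by Assumption~\ref{ass:finite_value_func}. Note that $V_0\ge 0$ by \eqref{eq:psi_bounded}, so there is no $\infty-\infty$ issue.

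Now consider the measurable map $\omega\mapsto(x_0(\omega),x_\ft(\omega))$ from $\Omega$ to $(\bbR^n)^2$. Set $\gamma:=(x_0,x_\ft)_\#\bfP^\pi$. By the data-processing inequality (the same one used in the proof of Theorem~\ref{thm:maxent_solves_SB}),
\[
\kl{\gamma}{\bar{\bbQ}_{0,\ft}}\le \kl{\bfP^\pi}{\jref}<\infty ,
\]
since $\bar{\bbQ}_{0,\ft}=(x_0,x_\ft)_\#\jref$ by definition. The endpoint constraints \eqref{eq:cov_constraint} say exactly that the marginals of $\gamma$ are $\mu_0$ and $\mu_\ft$, so $\gamma\in\Gamma(\mu_0,\mu_\ft)$. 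Hence Assumption~\ref{ass:finite_kl} holds.

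\emph{(b) $\Rightarrow$ (a).} Assume Assumption~\ref{ass:finite_kl}, so that Assumptions~\ref{ass:cost}--\ref{ass:finite_kl} all hold. Theorem~\ref{thm:maxent_solves_SB} then provides a Markov policy $\pi^*$ supported on $\bbU$ that generates $\bfP^*$. This policy is feasible for Problem~\ref{prob:MaxEnt}, since $T_\#\bfP^*=\bbP^{\rm SB}_{0:\ft}$ has endpoint marginals $\mu_0$ and $\mu_\ft$. Its cost is
\[
J(\pi^*)=\int V_0\,\rmd\mu_0+\kl{\bfP^*}{\jref}.
\]
Both terms are finite, by Assumption~\ref{ass:finite_value_func} and \eqref{eq:SB_finite} respectively. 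Therefore $\pi^*$ witnesses (a).

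The only delicate point is in (a) $\Rightarrow$ (b): Proposition~\ref{prop:performance_decompose} is stated for arbitrary causal policies supported on $\bbU$, so it applies to the given $\pi$ without any extra regularity, and the subtraction above is legitimate because both sides are finite. Beyond this, the argument is a direct combination of earlier results, and I do not expect a substantive obstacle.
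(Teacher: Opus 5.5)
Your proof is correct and follows essentially the same argument as the paper. For (a)$\Rightarrow$(b) you combine Proposition~\ref{prop:performance_decompose} with the data-processing inequality under $(x_0,x_\ft)$, and for (b)$\Rightarrow$(a) you use the policy $\pi^*$ of Theorem~\ref{thm:maxent_solves_SB} together with \eqref{eq:SB_finite}. Your extra remarks are correct and only spell out what the paper leaves implicit: that $V_0\ge 0$ rules out an $\infty-\infty$ issue, and that $\pi^*$ is feasible because $T_\#\bfP^*=\bbP^{\rm SB}_{0:\ft}$.
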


\begin{proof}
    If (a) holds, Proposition~\ref{prop:performance_decompose} gives $ \kl{\bfP^\pi}{\jref} < \infty $. Thus, by the data-processing inequality, we have
    \[
        \kl{(x_0,x_\ft)_\# \bfP^\pi}{\bar{\bbQ}_{0,\ft}} < \infty .
    \]
    Since $ (x_0,x_\ft)_\# \bfP^\pi \in \Gamma(\mu_0,\mu_\ft) $, Assumption~\ref{ass:finite_kl} holds.

    Conversely, if (b) holds, then by Theorem~\ref{thm:maxent_solves_SB}, there exists a policy $ \pi^* $ satisfying the endpoint constraints~\eqref{eq:cov_constraint}. Moreover, by Proposition~\ref{prop:performance_decompose} and $ \kl{\bfP^{\pi^*}}{\jref} < \infty $ in \eqref{eq:SB_finite}, we have $ J(\pi^*) < \infty $. This completes the proof.
\end{proof}

When $q_k\equiv0$, we have $\psi_k\equiv1$ and hence
$\bar\pi_k(\cdot\mid x)=R_k$, so that $\bar{\bbQ}_{0:N}$ is the
path measure generated by the state-independent reference inputs
$u_k\sim R_k$.
This parallels the continuous-time SB setting, where the reference
path measure is generated by the uncontrolled noise-driven dynamics~\cite{Pra1990}.
When state costs are present, the terminal-unconstrained MaxEnt
policy $\bar\pi_k$ modifies this reference so as to incorporate the
state costs into the SB reference.

\subsection{Characterization of optimal Markov policies}

The existence of an optimal Markov policy follows from Theorem~\ref{thm:maxent_solves_SB}.
The next proposition gives a direct representation of such a policy.
The proof is deferred to Appendix~\ref{app:opt_policy_condi}.
\begin{proposition}[MaxEnt optimal policy]\label{prop:opt_policy_condi}
    Suppose that Assumptions~\ref{ass:cost}--\ref{ass:finite_kl} hold. 
    For every $ k \in \bbra{0,\ft-1} $, choose a nonnegative Borel function $ \beta_k : \bbR^n \times \bbU \rightarrow [0,\infty] $ such that
    \begin{equation}
        \red{\beta_k (x_k, u_k) = \bbE_{\jref} [ G(x_0,x_\ft) \mid x_k , u_k ] , \quad \jref\text{-a.s.},}
    \end{equation}
    where $ G $ is defined in \eqref{eq:G_def}.
    Define
    \begin{equation}
        \alpha_k(x) := \int_\bbU \beta_k (x,u) \bar{\pi}_k (\rmd u \mid x) .
    \end{equation}
    Let
    \begin{equation}
        w_k (x,u) :=
        \begin{dcases}
            \frac{\beta_k(x,u)}{\alpha_k (x)} , & 0 < \alpha_k (x) < \infty, \\
            1, & \text{otherwise},
        \end{dcases}
    \end{equation}
    and
    \begin{equation}\label{eq:opt_policy_condi}
        \pi_k^* (\rmd u \mid x) := w_k (x,u) \bar{\pi}_k (\rmd u \mid x).
    \end{equation}
    Then, $\pi^* = \{\pi_k^*\}_{k=0}^{\ft-1}$ is an optimal policy of Problem~\ref{prob:MaxEnt}.
\end{proposition}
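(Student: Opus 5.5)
The plan is to show that the Markov policy $\pi^*$ in \eqref{eq:opt_policy_condi} generates $\bfP^*$ in \eqref{eq:Pstar_def}, or at least a state-input distribution whose state marginals and objective coincide with those of $\bfP^*$. Optimality then follows from Theorem~\ref{thm:maxent_solves_SB}. The most natural route is through the Markovization of Proposition~\ref{prop:markov}.

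Let $\pi^\circ$ be any causal policy generating $\bfP^*$; it exists by Lemma~\ref{lem:lifted}. Theorem~\ref{thm:maxent_solves_SB} shows that $\sfM\pi^\circ$ is optimal and generates $\bfP^*$. It therefore suffices to show that, for each $k$,
\[
\pi_k^*(\cdot\mid x) = \bfP^*(u_k\in\cdot\mid x_k=x)
\]
for $\bbP_k^*$-a.e.\ $x$. Then $\pi^*$ and $\sfM\pi^\circ$ agree almost everywhere along the trajectory, and they induce the same $\bfP^*$ by an induction over $k$ using \eqref{eq:Ppi_def}.

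To compute the conditional law, fix bounded Borel functions $\varphi$ on $\bbR^n$ and $\chi$ on $\bbU$. By \eqref{eq:Pstar_def} and the tower property under $\jref$,
\[
\bbE_{\bfP^*}[\varphi(x_k)\chi(u_k)] = \bbE_{\jref}[\varphi(x_k)\chi(u_k)\beta_k(x_k,u_k)].
\]
Under $\jref$ the policy is $\bar\pi$, which is Markov. Hence the conditional law of $u_k$ given $h_k$, and in particular given $x_k$, is $\bar\pi_k(\cdot\mid x_k)$. Writing $\bar{\bbQ}_k$ for the law of $x_k$ under $\jref$, the right-hand side becomes
\[
\int \varphi(x)\int \chi(u)\beta_k(x,u)\,\bar\pi_k(\rmd u\mid x)\,\bar{\bbQ}_k(\rmd x).
\]
Taking $\chi\equiv1$ shows that $\bbP_k^*(\rmd x)=\alpha_k(x)\bar{\bbQ}_k(\rmd x)$. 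Since $\bbP_k^*$ is a probability measure, $\alpha_k<\infty$ holds $\bar{\bbQ}_k$-a.e., and $\{\alpha_k=0\}$ is $\bbP_k^*$-null. Dividing by $\alpha_k$ on $\{0<\alpha_k<\infty\}$ identifies the disintegration as
\[
\bfP^*(u_k\in\rmd u\mid x_k=x)=\frac{\beta_k(x,u)}{\alpha_k(x)}\bar\pi_k(\rmd u\mid x)=\pi_k^*(\rmd u\mid x)
\]
for $\bbP_k^*$-a.e.\ $x$. The exceptional set where $w_k\equiv1$ is $\bbP_k^*$-null, so it is irrelevant. Moreover, $\pi_k^*(\cdot\mid x)$ is a probability measure wherever $0<\alpha_k(x)<\infty$, since there $\int w_k\,\rmd\bar\pi_k=1$; elsewhere it equals $\bar\pi_k(\cdot\mid x)$. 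Thus $\pi^*$ is a valid Borel Markov kernel supported on $\bbU$.

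The main technical step is the final identification that $\pi^*$ generates $\bfP^*$ itself, and not merely a measure with the right one-step conditionals. For this I will argue by induction on $k$, using the fact that $\bfP^*$ is generated by the Markov policy $\sfM\pi^\circ$; this is established in the proof of Theorem~\ref{thm:maxent_solves_SB}. Suppose the joint laws of $(x_0,u_{0:k-1})$ under $\pi^*$ and under $\bfP^*$ coincide. Then $x_k$ has law $\bbP_k^*$ under both. The two kernels $\pi_k^*$ and $(\sfM\pi^\circ)_k$ agree $\bbP_k^*$-a.e. Since under both measures the conditional law of $u_k$ given $h_k$ depends on $h_k$ only through $x_k$, the joint laws agree up to step $k$ as well. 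Concluding $\bfP^{\pi^*}=\bfP^*$, we obtain $J(\pi^*)=J(\pi^\circ)$ by Proposition~\ref{prop:performance_decompose} and feasibility of $\pi^*$ from $T_\#\bfP^*=\bbP^{\rm SB}_{0:\ft}$. This gives optimality.
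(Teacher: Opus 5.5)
Your proposal is correct and follows essentially the same route as the paper. You identify $\pi_k^*$ as a version of $\bfP^*(u_k\in\cdot\mid x_k)$ via the tower property with $\beta_k$ and the Markov structure of $\bar\pi$ under $\jref$, check that $0<\alpha_k<\infty$ holds $\bfP^*$-a.s., and conclude through the Markovization of an optimal policy and Theorem~\ref{thm:maxent_solves_SB}; your test-function computation (in place of the paper's Bayes ratio) and your explicit induction showing $\bfP^{\pi^*}=\bfP^*$ only spell out steps the paper leaves implicit.
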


The above representation of an optimal policy~\eqref{eq:opt_policy_condi} involves a conditional expectation of the endpoint Radon--Nikodym density $ G $.
When $ G $ admits a multiplicative factorization in the initial and terminal states, this representation can be simplified and computed through a backward recursion. The next proposition gives this factorized form of the optimal policy.
The proof is provided in Appendix~\ref{app:doob_opt_policy}.

\begin{proposition}[Factorized optimal policy]\label{prop:doob_opt_policy}
    Suppose that Assumptions~\ref{ass:cost}--\ref{ass:finite_kl} hold. 
    Assume that there exist finite nonnegative Borel functions $ a,b : \bbR^n \rightarrow \bbR $ such that
    \begin{align}
        G(x,y) = a(x) b(y), \quad \bar{\bbQ}_{0,\ft}\text{-a.e.} \label{eq:endpoint_factorization}
    \end{align}
    and
    \begin{equation}\label{eq:initial-factor-positive}
    0 < a (x) < \infty, \quad  \mu_0\text{-a.e.} 
    \end{equation}
    Define 
    \begin{align}
        b_\ft (x) &:= b(x), \nonumber\\
        b_k (x) &:= \int_\bbU b_{k+1} (f_k(x,u)) \bar{\pi}_k (\rmd u \mid x), \quad k = \ft-1,\ldots,0 . \label{eq:markov-h}
    \end{align}
    Let
    \begin{align*}
        \wtilde{w}_k (x,u) :=
        \begin{dcases}
            \frac{b_{k+1} (f_k(x,u))}{b_k (x)}, & 0 < b_k (x) < \infty, \\
            1, & \text{otherwise} ,
        \end{dcases}
    \end{align*}
    and define
    \begin{equation}
        \pi_k^* (\rmd u \mid x) := \wtilde{w}_k (x,u) \bar{\pi}_k (\rmd u \mid x) .
    \end{equation}
    Then, $ \{\pi_k^*\}_{k=0}^{\ft-1} $ is an optimal policy of Problem~\ref{prob:MaxEnt}.
\end{proposition}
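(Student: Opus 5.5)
Since Theorem~\ref{thm:maxent_solves_SB} shows that $\bfP^*$ is the unique optimal state-input distribution, it suffices to show that the Markov policy $\pi^*$ defined via $\wtilde{w}_k$ generates $\bfP^*$, i.e., $\bfP^{\pi^*} = \bfP^*$. Optimality then follows from Proposition~\ref{prop:performance_decompose}. This is because $J(\pi^*) = \int V_0\rmd\mu_0 + \kl{\bfP^*}{\jref}$, which equals the minimum. My plan is to compute the density of $\bfP^{\pi^*}$ with respect to $\jref = \bfP^{\bar\pi}$ and compare it with $G(x_0,x_\ft) = a(x_0)b(x_\ft)$.

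First, I would establish the Markov (Doob $h$-transform) structure under $\jref$. Because $\bar\pi$ is Markov and the dynamics are deterministic, the backward recursion \eqref{eq:markov-h} yields $b_k(x_k) = \bbE_{\jref}[b(x_\ft)\mid h_k]$ $\jref$-a.s. This identity holds in $[0,\infty]$ by monotone convergence, since $b\ge 0$. Next I would show finiteness and positivity along the optimal path. Because $\int a(x_0)b(x_\ft)\rmd\jref = \gamma^*(\bbR^{2n}) = 1$, we get $\int a(x_0)b_0(x_0)\mu_0(\rmd x_0)=1$. Moreover, the initial marginal of $\gamma^*$ is $\mu_0$, which gives $a(x_0)b_0(x_0)=1$ for $\mu_0$-a.e.\ $x_0$. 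Combined with \eqref{eq:initial-factor-positive}, this yields $0<b_0<\infty$ $\mu_0$-a.e., with $b_0 = 1/a$. Similarly, I would show that $0<b_k(x_k)<\infty$ holds $\bfP^*$-a.s.\ for all $k$. Finiteness follows from the martingale identity, since $b_k(x_k)$ is finite $\jref$-a.s.\ wherever $b_0$ is finite. Positivity follows because $\bfP^*(b_k(x_k)=0)=\int \bfone_{\{b_k(x_k)=0\}} a(x_0) b(x_\ft)\rmd\jref = \int \bfone_{\{b_k(x_k)=0\}} a(x_0)b_k(x_k)\rmd\jref = 0$.

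Then I would telescope. On the event $E := \{0<b_k(x_k)<\infty\ \forall k\}$, we have
\[
\frac{\rmd\bfP^{\pi^*}}{\rmd\jref}(\omega) = \prod_{k=0}^{\ft-1}\frac{b_{k+1}(x_{k+1})}{b_k(x_k)} = \frac{b(x_\ft)}{b_0(x_0)} = a(x_0)b(x_\ft) = G(x_0,x_\ft),
\]
where the last steps use $b_0=1/a$ $\mu_0$-a.e. To make this rigorous, I would verify that each $\pi_k^*(\cdot\mid x)$ is a probability kernel, since $\int \wtilde w_k \rmd\bar\pi_k = 1$ by \eqref{eq:markov-h} on the good set. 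I would then write $\bfP^{\pi^*}=\mu_0\prod_k \pi_k^*$ against $\jref = \mu_0\prod_k\bar\pi_k$.

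The main obstacle is handling the complement of $E$, where $\wtilde w_k=1$. I would argue by induction on $k$ that the $\bfP^{\pi^*}$-probability of first leaving $E$ at step $k$ is zero. Suppose the path is in the good set at time $k$. Then under $\pi_k^*$, the next state satisfies $b_{k+1}(x_{k+1})>0$ almost surely, since the weight vanishes where $b_{k+1}=0$. It also satisfies $b_{k+1}(x_{k+1})<\infty$ almost surely, since $\int b_{k+1}(f_k)\rmd\bar\pi_k=b_k<\infty$. Consequently, $\bfP^{\pi^*}$ and $G\,\jref=\bfP^*$ agree as measures concentrated on $E$. Both are probability measures, so $\bfP^{\pi^*}=\bfP^*$, which completes the argument.
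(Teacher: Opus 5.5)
Your proof is correct, but it verifies the $h$-transform globally rather than step by step as the paper does. Both arguments rest on the same two facts: the martingale identity $\bbE_{\jref}[b(x_\ft)\mid h_k]=b_k(x_k)$, and the $\bfP^*$-a.s.\ positivity and finiteness of $b_k(x_k)$.

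The paper then applies Bayes' formula for conditional expectations given $h_k$ and shows $\bfP^*(u_k\in\rmd u\mid h_k)=\pi_k^*(\rmd u\mid x_k)$. Because $h_k$ contains $x_0$, the factor $a(x_0)$ cancels in that ratio, so the paper never needs to relate $a$ to $b_0$.

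You instead write $\rmd\bfP^{\pi^*}/\rmd\jref=\prod_k\wtilde w_k(x_k,u_k)$ and telescope it to $b(x_\ft)/b_0(x_0)$. This forces the extra identity $a\,b_0=1$ $\mu_0$-a.e., which you correctly derive from the initial-marginal constraint on $\gamma^*$; it is the initial-side equation of the Schr\"{o}dinger system.

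What each route buys: yours uses only products of densities. It gives an explicit density and $\bfP^{\pi^*}=\bfP^*$ directly, without the Markovization detour the paper takes at the end. The paper's route sidesteps pinning down $a$ altogether.

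One simplification: the ``first exit from $E$'' induction you flag as the main obstacle is unnecessary. Once $\rmd\bfP^{\pi^*}/\rmd\jref=\rmd\bfP^*/\rmd\jref$ holds $\jref$-a.e.\ on $E$ and $\bfP^*(E)=1$, you get $\bfP^{\pi^*}(E)=\bfP^*(E)=1$ immediately, so the two probability measures coincide.
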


Proposition~\ref{prop:doob_opt_policy} relies on the multiplicative factorization of $ G $.
The standard factorization theorem for static SBs \cite[Theorem~2.1]{Nutz2021introduction} yields the following sufficient condition under which this factorization holds.
The proof is given in Appendix~\ref{app:factorization_sufficient}.
\begin{proposition}[Sufficient condition for factorization]\label{prop:factorization_sufficient}
    Let $ S \in \scrB( \bbR^n) $ satisfy $ \mu_N(S) = 1 $ and suppose that Assumption~\ref{ass:finite_kl} holds.
    Then, $ c_S := \bar{\bbQ}_{0,\ft} (\bbR^n \times S) $ satisfies $ c_S > 0 $.
    Define
    \begin{equation}\label{eq:Q_S}
        \bar{\bbQ}_{0,\ft}^S := \frac{\bfone_{\bbR^n \times S} \bar{\bbQ}_{0,\ft}}{c_S}  .
    \end{equation}
    Assume that
    \begin{equation}\label{eq:mutual_continuity}
        \bar{\bbQ}_{0,\ft}^S \ll \mu_0 \otimes \mu_\ft, \quad \mu_0 \otimes \mu_\ft \ll \bar{\bbQ}_{0,\ft}^S .
    \end{equation}
    Then, there exist finite nonnegative Borel functions $ a,b : \bbR^n \rightarrow \bbR $ satisfying \eqref{eq:endpoint_factorization} and \eqref{eq:initial-factor-positive}.
\end{proposition}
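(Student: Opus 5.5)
The plan is to reduce to the factorization theorem for static SBs \cite[Theorem~2.1]{Nutz2021introduction}, applied with the probability reference $\bar{\bbQ}_{0,\ft}^S$, and then translate the result back to $\bar{\bbQ}_{0,\ft}$.

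\textbf{Step 1: $c_S > 0$.} Take $\gamma$ from Assumption~\ref{ass:finite_kl}. Then $\gamma \ll \bar{\bbQ}_{0,\ft}$. Since $\gamma(\bbR^n \times S) = \mu_\ft(S) = 1$, absolute continuity forces $\bar{\bbQ}_{0,\ft}(\bbR^n\times S) > 0$. Hence $\bar{\bbQ}_{0,\ft}^S$ is a well-defined probability measure.

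\textbf{Step 2: transfer the static SB to the new reference.} Every coupling $\gamma\in\Gamma(\mu_0,\mu_\ft)$ is concentrated on $\bbR^n\times S$. On that set, whenever $\gamma\ll\bar{\bbQ}_{0,\ft}$, we have $\frac{\rmd\gamma}{\rmd\bar{\bbQ}^S_{0,\ft}} = c_S \frac{\rmd\gamma}{\rmd \bar{\bbQ}_{0,\ft}}$. This gives
\[
\kl{\gamma}{\bar{\bbQ}_{0,\ft}^S} = \kl{\gamma}{\bar{\bbQ}_{0,\ft}} + \log c_S .
\]
Conversely, $\gamma\ll\bar{\bbQ}^S_{0,\ft}$ implies $\gamma\ll\bar{\bbQ}_{0,\ft}$. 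Consequently the two static problems have the same feasible finite-KL couplings and the same minimizer $\gamma^*$, using Proposition~\ref{prop:static_SB}. The finite-KL coupling from Assumption~\ref{ass:finite_kl} also has finite KL divergence with respect to $\bar{\bbQ}^S_{0,\ft}$.

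\textbf{Step 3: apply the factorization theorem.} The mutual absolute continuity \eqref{eq:mutual_continuity} is exactly the hypothesis of \cite[Theorem~2.1]{Nutz2021introduction}, together with a finite-value static problem. The theorem yields measurable $\varphi,\chi:\bbR^n\to\bbR$ such that
\[
\frac{\rmd\gamma^*}{\rmd\bar{\bbQ}^S_{0,\ft}}(x,y) = \Ee^{\varphi(x)+\chi(y)}, \qquad \bar{\bbQ}^S_{0,\ft}\text{-a.e.}
\]
Define
\[
a(x) := \Ee^{\varphi(x)}, \qquad b(y) := c_S^{-1}\Ee^{\chi(y)}\bfone_S(y).
\]
Both are finite and nonnegative, and $a$ is strictly positive and finite everywhere, so \eqref{eq:initial-factor-positive} holds.

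\textbf{Step 4: check \eqref{eq:endpoint_factorization} with respect to $\bar{\bbQ}_{0,\ft}$.}
\begin{itemize}
\item On $\bbR^n\times S$, the identity $G = c_S^{-1}\frac{\rmd\gamma^*}{\rmd\bar{\bbQ}^S_{0,\ft}} = a(x)b(y)$ holds $\bar{\bbQ}_{0,\ft}$-a.e., because the restrictions of the two measures there differ only by the constant $c_S$.
\item On the complement, $\gamma^*(\bbR^n\times S^c)=0$. Hence $G=0$ $\bar{\bbQ}_{0,\ft}$-a.e. there, which matches $b=0$ on $S^c$.
\end{itemize}
If needed, modify $\varphi,\chi$ on null sets so they are genuinely real-valued Borel functions rather than extended-valued.

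\textbf{Main obstacle.} I expect the difficulty to lie in Step 3: verifying that the precise hypotheses of the cited theorem match ours. The theorem may require a finite-entropy coupling with respect to a product-equivalent reference, and we have that via Step 2. The other delicate point is handling the versions of the Radon--Nikodym densities on null sets carefully, so that the equalities hold a.e. for the right measure.
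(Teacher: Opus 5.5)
Your proposal is correct and follows the paper's proof step for step. You obtain $c_S>0$ from the finite-KL coupling, transfer the minimizer via $\kl{\gamma}{\bar{\bbQ}_{0,\ft}^S}=\kl{\gamma}{\bar{\bbQ}_{0,\ft}}+\log c_S$, apply \cite[Theorem~2.1]{Nutz2021introduction}, and make the same choice $a=\Ee^{\varphi}$, $b=c_S^{-1}\bfone_S\Ee^{\chi}$. Your separate check on $\bbR^n\times S^c$, where $G=0$ $\bar{\bbQ}_{0,\ft}$-a.e., is simply an unpacked form of the paper's chain-rule step using $\rmd\bar{\bbQ}_{0,\ft}^S/\rmd\bar{\bbQ}_{0,\ft}=c_S^{-1}\bfone_S(y)$.
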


\red{
\subsection{Numerical computation}\label{subsec:numerical}

We describe a finite-state approximation of SBs based on the static-to-dynamic
reduction in Lemma~\ref{lem:static-to-dynamic}.
Let $\{\calC_j\}_{j=1}^{M}$ be cells used to discretize
the state space, with representative points $\rmx_j\in \calC_j$.
Let $P_k^R=(P_{k,ij}^R)\in\bbR^{M\times M}$ be the resulting
row-stochastic approximation of the reference transition, where
$P_{k,ij}^R$ approximates the probability of
$x_{k+1}\in \calC_j$ given $x_k=\rmx_i$ and $u_k\sim R_k$.
We discretize the soft-Bellman equation~\eqref{eq:desire_bellman} and the reference policy \eqref{eq:pibar_2}
to construct the terminal-unconstrained reference process:
\begin{align*}
    \psi_{\ft,i}&=1,\qquad
    \psi_{k,i}
    =
    \Ee^{-q_k(\rmx_i)}
    \sum_j P_{k,ij}^R\psi_{k+1,j}, \\
    \bar P_{k,ij}
    &=
    \Ee^{-q_k(\rmx_i)}P_{k,ij}^R
    \frac{\psi_{k+1,j}}{\psi_{k,i}},
    \qquad k=\ft-1,\ldots,0.
\end{align*}

Let $m_0,m_\ft\in\bbR_{\ge0}^{M}$ be probability mass vectors
approximating $\mu_0,\mu_\ft$ on the grid.
The reference endpoint distribution is represented by
$
    Q:=\operatorname{diag}(m_0)
    \bar P_0\bar P_1\cdots\bar P_{\ft-1}
$,
where $\diag(m_0)$ denotes the diagonal matrix whose
diagonal entries are given by $m_0$.
The finite-state static SB problem is
\[
\begin{aligned}
    \underset{\Gamma\ge0}{\operatorname{minimize}}
    \quad&
    \sum_{i,j}\Gamma_{ij}\log\frac{\Gamma_{ij}}{Q_{ij}}\\
    \text{subject to}\quad&
    \Gamma\mathbf 1=m_0,\qquad
    \Gamma^\top\mathbf 1=m_\ft.
\end{aligned}
\]
Let
$
\calI_0:=\{i:m_{0,i}>0\} $ and 
$\calI_N:=\{j:m_{N,j}>0\}$.
Assume that $Q_{ij}>0$ for all $i\in \calI_0$ and $j\in \calI_N$.
Then, applying the Sinkhorn algorithm~\cite{Peyre2019} to the submatrix
$Q_{\calI_0,\calI_N}$ yields the optimal solution in the form
\[
    \Gamma_{\calI_0,\calI_N}^*
    =
    \operatorname{diag}(a)
    Q_{\calI_0,\calI_N}
    \operatorname{diag}(b),
\]
where $a$ and $b$ are chosen so that
$\Gamma_{\calI_0,\calI_N}^*\mathbf 1=m_{0,\calI_0}$ and
$(\Gamma_{\calI_0,\calI_N}^*)^\top\mathbf 1=m_{N,\calI_N}$.
Extend $b$ by zero outside $\calI_N$ to a
vector $\wtilde{b}\in\bbR^M$. 
Then, by the finite-state counterpart of Proposition~\ref{prop:doob_opt_policy}, the optimal transition matrices $ \{P_k^*\}_k $ for the SB and state marginals $ \{m_k\} $ are obtained through
\begin{align*}
    &h_\ft=\wtilde{b},\qquad h_k=\bar P_kh_{k+1}, \\
    &P_{k,ij}^*
    =
    \bar P_{k,ij}\frac{h_{k+1,j}}{h_{k,i}},
    \qquad
    m_{k+1}=(P_k^*)^\top m_k.
\end{align*}
For rows with $h_{k,i}=0$, which carry no bridge mass, set
$P_{k,ij}^* :=\bar P_{k,ij}$.

To obtain a control policy from $\{P_k^*\}$, for each pair $(i,j)$ with
$P_{k,ij}^*>0$, choose a representative input
$u_{k,ij}\in\bbU$ satisfying
$
    f_k(\rmx_i,u_{k,ij})\in \calC_j
$.
Define
\[
    \pi_k(\rmd u\mid \rmx_i)
    :=
    \sum_{j=1}^M
    P_{k,ij}^*\,\delta_{u_{k,ij}}(\rmd u),
\]
where $\delta_u$ denotes the Dirac measure at $u$.
Under the finite-state approximation, this policy induces the
cell-to-cell transition probabilities specified by $P_k^*$.

\begin{figure}[t]
    \centering

    \begin{subfigure}[t]{0.49\linewidth}
        \centering
        \includegraphics[width=\linewidth]{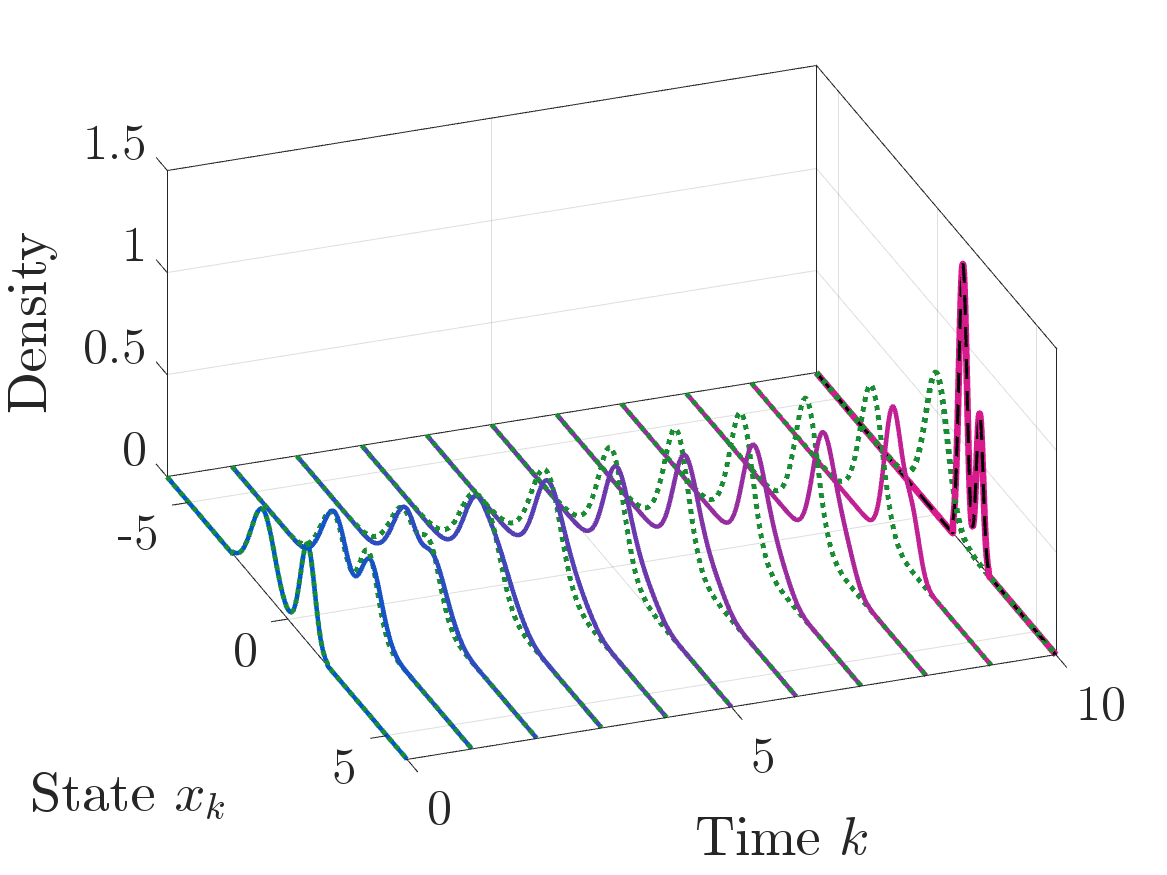}
        \caption{Without state cost}
        \label{fig:eps1}
    \end{subfigure}
    \hfill
    \begin{subfigure}[t]{0.49\linewidth}
        \centering
        \includegraphics[width=\linewidth]{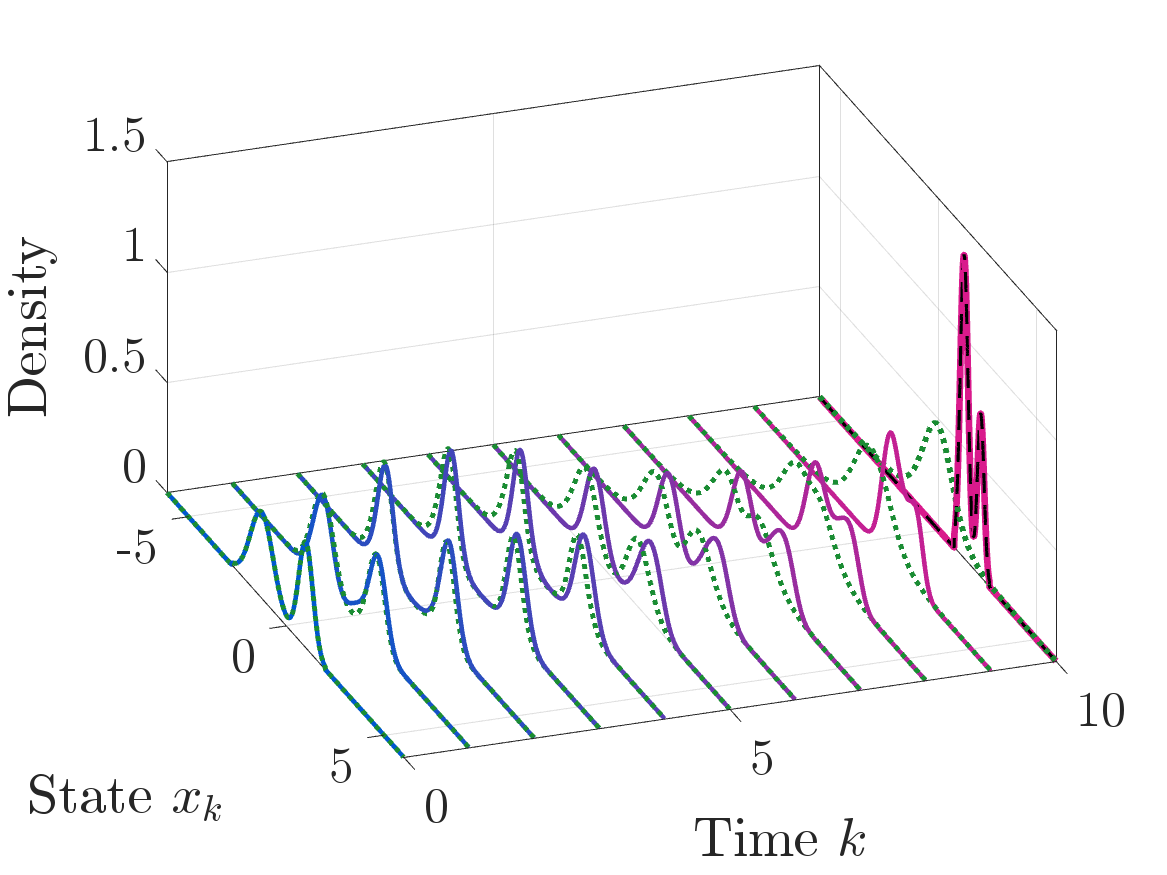}
        \caption{With state cost \eqref{eq:num-state-cost}}
        \label{fig:eps2}
    \end{subfigure}

    \caption{Evolution of the state density for the nonlinear system \eqref{eq:num-system}
with non-Gaussian endpoint distributions \eqref{eq:num-p0} and
\eqref{eq:num-pN}. Solid lines: densities of $x_k$ under the optimal policy; dotted lines: time marginals of the corresponding terminal-unconstrained
MaxEnt reference $\bar{\bbQ}_{0:N}$ in \eqref{eq:Qbar_def}; dashed lines: target density $ \rho_N $.}
    \label{fig:num-snapshots}
\end{figure}

\begin{example}[Nonlinear non-Gaussian steering]\label{ex:numerical}
Consider
\begin{equation}\label{eq:num-system}
    x_{k+1}=x_k - 0.25\sin x_k+u_k,
\end{equation}
with $\bbU=\bbR$, $\ft=10$, and the input cost
$
    \ell_k(u)=\frac{u^2}{2(0.35)^2}
$.
To illustrate the effect of the state cost, set
\begin{equation}
q_k(x)=
\begin{cases}
4\exp\!\left(-\frac{x^2}{2(0.8)^2}\right),&k=2,3,4,\\
0,&\text{otherwise}.
\end{cases}
\label{eq:num-state-cost}
\end{equation}
The initial and target densities are the truncated Gaussian mixtures
\begin{align}
 \rho_0(x)&\propto
 \left[
0.45\Ee^{-\frac{(x+1.25)^2}{2(0.58)^2}}
+
0.55\Ee^{-\frac{(x-1.00)^2}{2(0.36)^2}}
\right]
\bfone_{[-2.8,2.0]}(x),
 \label{eq:num-p0}\\
 \rho_N(x)&\propto
 \left[
 0.65\Ee^{-\frac{(x-1.30)^2}{(2(0.20)^2)}}
 +0.35\Ee^{-\frac{(x-2.15)^2}{(2(0.17)^2)}}
 \right]\bfone_{[0.8,2.6]}(x).
 \label{eq:num-pN}
\end{align}
In this setting, Assumptions~\ref{ass:cost} and \ref{ass:finite_value_func} are satisfied.
Moreover, in Subsection~\ref{subsec:control_affine}, we will see that Assumption~\ref{ass:finite_kl} also holds. Hence, Theorem~\ref{thm:maxent_solves_SB} ensures the existence of an optimal policy.

We discretize $[-7,7]$ using $M=1501$ equally spaced grid points $ \{\rmx_i\} $ and set
\[
    u_{k,ij}:=\rmx_j-\rmx_i + 0.25\sin \rmx_i,\quad
    P_{k,ij}^R\propto
    \exp\!\left(-\ell_k (u_{k,ij})\right),
\]
with each row normalized to sum to one.
Then, we apply the above Sinkhorn procedure.

Fig.~\ref{fig:num-snapshots} compares the optimal density evolution
with the corresponding terminal-unconstrained MaxEnt reference $ \bar{\bbQ}_{0:N} $.
To reduce the state costs incurred at $k=2,3,4$, the
terminal-unconstrained reference with \eqref{eq:num-state-cost}
shifts probability mass away from the origin during the
first half of the horizon.
Consequently, under the optimal controller with the state cost, the state density also avoids the region around the origin.
After the penalized stages, the controlled density
evolves towards the target distribution.
\hfill $ \diamondsuit $
\end{example}

}

\section{Killing and Feynman--Kac Interpretation of the State Cost}\label{sec:killing}

\begin{figure}[t]
        \centering
        \includegraphics[width=0.7\linewidth]{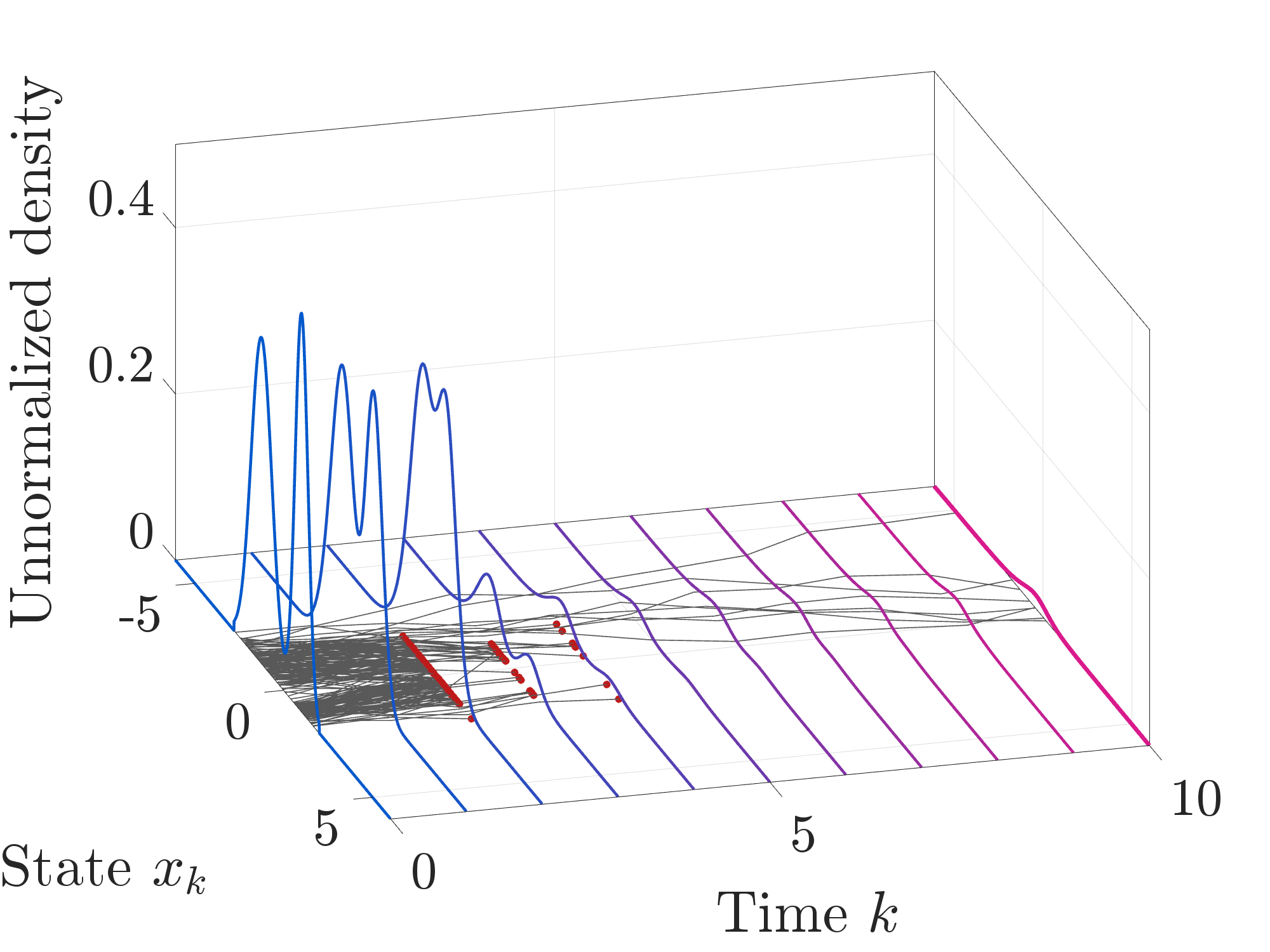}
\caption{
Evolution of the unnormalized killed state density and $200$
sample paths for system~\eqref{eq:num-system}
driven by $u_k\sim R_k$, under the input cost $\ell_k$
and survival probability $\Ee^{-q_k(x)}$
specified in Example~\ref{ex:numerical}.
Colored solid curves: unnormalized state densities;
gray curves on the bottom plane: sample paths;
red dots: killing events.
}
\label{fig:killed-process}
\end{figure}

We now reinterpret the state cost as a killing mechanism. This provides an alternative reference measure that leads to the same SB as the path measure $ \bar{\bbQ}_{0:\ft} $ induced by the terminal-unconstrained optimal policy. It also clarifies the relationship among the killed reference, its normalized Feynman--Kac measure, and the terminal-unconstrained optimal path measure.

We introduce killing into the baseline process generated by $ \bfR $.
Define the one-step survival probability at state $ x $:
\begin{equation}\label{eq:killing_probability}
    G_k (x) := \Ee^{-q_k (x)} \in (0,1], \quad k\in \bbra{0,\ft-1} .
\end{equation}
The killed process can be described as follows.
If a particle is alive at time $ k $ and is located at \(x_k=x\), it survives that step with
probability \(G_k(x)\), and is killed with probability \(1-G_k(x)\).
Conditional on survival, it draws $ u_k \sim R_k $ and moves according to $ x_{k+1} = f_k (x_k, u_k) $.

\begin{example}[Continued]
\red{Fig.~\ref{fig:killed-process} illustrates the killed process
for system~\eqref{eq:num-system}, driven by $u_k\sim R_k$
with $R_k$ determined by the input cost $\ell_k$ in
Example~\ref{ex:numerical}, and subject to the survival
probability $\Ee^{-q_k(x_k)}$ with $q_k$ given by
\eqref{eq:num-state-cost}.
The densities show the unnormalized distributions
of particles surviving up to each time, whose total mass decreases
as killing occurs.
The sample paths on the bottom plane terminate at their killing times.
The probability of survival through the horizon, denoted by $ Z_{\rm kill} $ later, equals the total mass of the killed density at the final time and is $ 0.0376 $.
}
\hfill $ \diamondsuit $
\end{example}

For \red{an initial state and an input} trajectory
$
\omega=(x_0,u_{0:\ft-1})
$, the probability of surviving all
stages along the trajectory is
$
\prod_{k=0}^{N-1}G_k(x_k(\omega))
$.
We therefore define the killed \red{state-input} reference by weighting
the \red{state-input} distribution \(\mathbf R\) with this cumulative
survival probability:
\begin{align}
    \mathbf K(\rmd\omega)&:=
      \left\{\prod_{k=0}^{N-1}G_k(x_k(\omega))\right\}
      \bfR(\rmd\omega) \nonumber\\
    &=
      \exp\!\left(
        -\sum_{k=0}^{N-1}q_k(x_k(\omega))
      \right)\bfR(\rmd\omega)  .
    \label{eq:killed-joint-measure}
\end{align}
The finite measure $ \bfK $ is generally not normalized. Its total mass is
the probability of survival over the horizon.

For a nonzero finite measure $ M $ and a probability measure $ P $ on the same measurable space, define the KL divergence between $ P $ and $ M $ by
\begin{equation}
    \klg{P}{M} :=
    \begin{dcases}
        \int \log \left( \frac{\rmd P}{\rmd M} \right) \rmd P , & P \ll M,\\
        + \infty , & P \not\ll M .
    \end{dcases}
\end{equation}
For the normalized measure $ Q := M / \int \rmd M $, it holds that
\begin{equation}\label{eq:klg_kl}
    \klg{P}{M} = \kl{P}{Q} - \log \left(\int \rmd M \right) .
\end{equation}

The following result shows that the cumulative state cost can be absorbed into the killed reference measure $ \bfK $.

\begin{proposition}[Killed-reference KL representation]\label{prop:maxent_killed}
    For any causal policy $ \pi $, it holds that
    \begin{equation}\label{eq:maxent_obj_killing}
        J(\pi) = \klg{\bfP^\pi}{\bfK} .
    \end{equation}
\end{proposition}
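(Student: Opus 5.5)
The plan is to prove \eqref{eq:maxent_obj_killing} by a chain-rule decomposition of the KL divergence between $\bfP^\pi$ and the unnormalized measure $\bfK$. By definition, $\bfK$ has density $\exp(-\sum_{k} q_k(x_k(\omega)))$ with respect to $\bfR = \bfP^R$. The measure $\bfR$ factorizes as $\mu_0(\rmd x_0)\prod_k R_k(\rmd u_k)$, and $\bfP^\pi$ factorizes as $\mu_0(\rmd x_0)\prod_k \pi_k(\rmd u_k\mid h_k)$. Since $x_k(\omega)$ is determined by $(x_0,u_{0:k-1})$, the history $h_k$ is a measurable function of $(x_0,u_{0:k-1})$. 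Hence the kernels may be regarded as kernels given $(x_0,u_{0:k-1})$ on $\Omega$.

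\emph{Case $J(\pi)<\infty$.} Then $\pi_k(\cdot\mid h_k)\ll R_k$ for $\bfP^\pi$-a.e.\ $h_k$, which gives $\bfP^\pi \ll \bfR$. Since $\bfK$ and $\bfR$ are mutually absolutely continuous (the density is strictly positive), we also get $\bfP^\pi\ll \bfK$. Next I would write
\[
\frac{\rmd \bfP^\pi}{\rmd \bfK}(\omega) = \exp\Bigl(\sum_{k=0}^{\ft-1} q_k(x_k(\omega))\Bigr)\prod_{k=0}^{\ft-1}\frac{\rmd \pi_k(\cdot\mid h_k)}{\rmd R_k}(u_k).
\]
Taking the logarithm and integrating against $\bfP^\pi$ gives $\bbE^\pi[\sum_k q_k(x_k)]$ plus $\sum_k \bbE^\pi[\log \frac{\rmd\pi_k}{\rmd R_k}(u_k)]$. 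Conditioning on $h_k$ (tower property) turns each term of the second sum into $\bbE^\pi[\kl{\pi_k(\cdot\mid h_k)}{R_k}]$, and the total is $J(\pi)$.

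The main obstacle is justifying the splitting of the integral of the log into separate expectations, since individual log-density terms may not be integrable. I would handle it as follows. All $q_k\ge 0$, and the conditional KL terms are nonnegative. Their expectations sum to the finite $J(\pi)$, so each is finite. The negative part of $\log \frac{\rmd\pi_k}{\rmd R_k}(u_k)$ is integrable, because $\int (\log p)^- p\,\rmd R_k \le e^{-1}$ conditionally on $h_k$. Therefore the chain-rule argument for KL divergence (e.g., via the standard additivity of KL for sequentially disintegrated measures) applies with all terms in $[0,\infty)$ or quasi-integrable, and the identity holds.

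\emph{Case $J(\pi)=\infty$.} I need to show $\klg{\bfP^\pi}{\bfK}=\infty$. If $\bfP^\pi\not\ll\bfK$, this holds by definition. Otherwise $\bfP^\pi\ll\bfR$, and the same disintegration shows that the conditional kernels are absolutely continuous $\bfP^\pi$-a.s. The chain rule then expresses $\klg{\bfP^\pi}{\bfK}$ as $\bbE^\pi[\sum_k q_k] + \sum_k\bbE^\pi[\kl{\pi_k}{R_k}]$. This is a sum of nonnegative extended-real terms, with the integral defined since its negative part is bounded as above. This sum equals $J(\pi)=\infty$.

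Alternatively, both cases follow at once from the additivity identity $\kl{\bfP^\pi}{\bfR}=\sum_k \bbE^\pi[\kl{\pi_k(\cdot\mid h_k)}{R_k}]$ for extended-real values. This is combined with $\klg{P}{e^{-F}\bfR} = \kl{P}{\bfR} + \int F\,\rmd P$ for $F\ge 0$ and $P\ll\bfR$, where the last identity is valid in $[0,\infty]$ because the negative part of $\log\frac{\rmd P}{\rmd \bfR}$ is $P$-integrable. I would present the proof in this two-lemma form.
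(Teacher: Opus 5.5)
Your proposal is correct and, in the two-lemma form you settle on, it is essentially the paper's proof. The paper also combines the chain rule $\kl{\bfP^\pi}{\bfR}=\sum_k\bbE^\pi[\kl{\pi_k(\cdot\mid h_k)}{R_k}]$ with $\rmd\bfK/\rmd\bfR=\exp(-\sum_k q_k(x_k))$, and treats the case $\bfP^\pi\not\ll\bfR$ separately. Your observation that the negative part of $\log(\rmd\bfP^\pi/\rmd\bfR)$ is $\bfP^\pi$-integrable (because $\bfR$ is a probability measure) supplies the justification for splitting the integral, which the paper leaves implicit.
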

\vspace{0.2cm}

\begin{proof}
    If $ \bfP^\pi \not\ll \bfR $, then $ \bfP^\pi \not\ll \bfK $, because $ \bfK $ and $ \bfR $ have the same null sets. In this case, both sides of \eqref{eq:maxent_obj_killing} are $ + \infty $.

    In what follows, we suppose that $ \bfP^\pi \ll \bfR $. Since
    \[
        \frac{\rmd \bfK}{\rmd \bfR} = \exp \! \left( - \sum_{k=0}^{\ft-1} q_k (x_k) \right) ,
    \]
    we have
    \[
        \frac{\rmd \bfP^\pi}{\rmd \bfK} = \frac{\rmd \bfP^\pi}{\rmd \bfR} \exp \! \left( \sum_{k=0}^{\ft-1} q_k (x_k) \right) .
    \]
    Taking the logarithm and integrating with respect to $ \bfP^\pi $, we obtain
    \begin{equation*}
        \klg{\bfP^\pi}{\bfK} = \kl{\bfP^\pi}{\bfR} + \bbE^\pi \! \left[ \sum_{k=0}^{\ft-1} q_k (x_k) \right] .
    \end{equation*}
    Substituting \eqref{eq:Ppi_R_kl} into this equation yields the desired result~\eqref{eq:maxent_obj_killing}.
\end{proof}

This proposition provides an alternative KL representation of the MaxEnt control objective, in which the state cost is incorporated into $ \bfR $ through the killing weights. Unlike the reference probability measure $ \jref $ in Proposition~\ref{prop:performance_decompose}, $ \bfK $ is generally an unnormalized finite measure. 

To relate this finite reference measure to a standard probability reference for the SB problem, we next normalize $ \bfK $.
Define the normalized Feynman--Kac measure $ \bfQ^{\FK} $ \cite{DelMoral2004} on the space of initial states and input trajectories by
\begin{equation}
    \bfQ^{\FK} := \frac{1}{Z_{\kil}} \bfK , \qquad Z_\kil := \bfK (\Omega) .
\end{equation}
We also define the path measures corresponding to $ \bfK $ and $ \bfQ^{\rm FK} $ by
\begin{equation}\label{eq:kill_path_def}
    \bbK_{0:\ft} := T_\# \bfK, \qquad \bbQ_{0:\ft}^\FK := T_\# \bfQ^\FK .
\end{equation}

To characterize the Feynman--Kac measure, we consider the probability of surviving from an intermediate state $ x $ at time $ k $ until the terminal time.
Let 
\[
    s_k(x)
    :=
    \bbE_{k,x}^{R} \Biggl[
        \prod_{j=k}^{N-1}G_j(x_j)
    \Biggr], \quad k\in \bbra{0,\ft},
\]
where \(\bbE_{k,x}^{R}\) denotes expectation for the state process
starting from \(x_k=x\) and driven by \(u_j\sim R_j\),
\(j\in \bbra{k,N-1}\).
Then, the survival probabilities $ \{s_k\} $ satisfy the same backward recursion as the desirability functions $ \{\psi_k\} $ in \eqref{eq:desire_bellman}. The next lemma uses this observation to relate the normalized Feynman--Kac measure to the terminal-unconstrained optimal \red{state-input} distribution $ \jref $.
\begin{lemma}[Feynman--Kac reference characterization]\label{lem:s_psi}
    Under Assumption~\ref{ass:cost}, it holds that $ s_k = \psi_k $ for any $ k \in \bbra{0,\ft} $. In particular,
    \begin{equation}
        Z_\kil = \int_{\bbR^n} \psi_0 (x) \mu_0 (\rmd x) \in (0,1] .
    \end{equation}
    Moreover, it holds that
    \begin{align}
        \bfQ^\FK (\rmd x_0, \rmd u_{0:\ft-1}) &= \mu_0^\FK (\rmd x_0) \prod_{k=0}^{\ft-1} \bar{\pi}_k (\rmd u_k \mid x_k), \label{eq:Q_FK}  \\
        \frac{\rmd \jref}{\rmd \bfQ^{\FK}} &= \frac{Z_\kil}{\psi_0 (x_0)} , \label{dPbar_dQFK}
    \end{align}
    where
    \begin{equation}
        \mu_0^\FK (\rmd x) := \frac{\psi_0 (x)}{Z_\kil} \mu_0 (\rmd x) .
    \end{equation}
\end{lemma}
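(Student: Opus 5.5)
We first show $s_k=\psi_k$ by backward induction on $k$. At $k=\ft$ the product is empty, so $s_\ft\equiv 1=\psi_\ft$ by \eqref{eq:desire_boundary}. For $k<\ft$, we condition on the first step under $\bbE^R_{k,x}$: the factor $G_k(x_k)=\Ee^{-q_k(x)}$ is deterministic, and $u_k\sim R_k$ is independent of the past. The subsequent inputs $u_j\sim R_j$, $j\ge k+1$, are independent of $u_k$, and the dynamics \eqref{eq:system} are deterministic. Hence the Markov property (Fubini on the product measure $\prod_j R_j$) gives
\[
s_k(x)=\Ee^{-q_k(x)}\int_\bbU s_{k+1}(f_k(x,u))\,R_k(\rmd u).
\]
This is exactly the recursion \eqref{eq:desire_bellman}, so $s_k=\psi_k$. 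Measurability and boundedness in $(0,1]$ come for free from \eqref{eq:psi_bounded}. We then integrate \eqref{eq:killed-joint-measure} over $\Omega$ and use $\bfR=\mu_0\otimes\prod_k R_k$ to get $Z_\kil=\int s_0\,\rmd\mu_0=\int\psi_0\,\rmd\mu_0$. This value lies in $(0,1]$ because $0<\psi_0\le 1$ and $\mu_0$ is a probability measure.

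For \eqref{eq:Q_FK}, we telescope the survival weights along the trajectory. Write $\psi_\ft\equiv1$ and use the definition \eqref{eq:pibar_def}, noting that its denominator equals $\Ee^{q_k(x)}\psi_k(x)$ by \eqref{eq:desire_bellman}. This gives
\[
\bar\pi_k(\rmd u\mid x)=\Ee^{-q_k(x)}\frac{\psi_{k+1}(f_k(x,u))}{\psi_k(x)}R_k(\rmd u).
\]
We take the product over $k=0,\dots,\ft-1$ along $\omega$, with $x_{k+1}=f_k(x_k,u_k)$. The ratios telescope to $\psi_\ft(x_\ft)/\psi_0(x_0)=1/\psi_0(x_0)$, so
\[
\mu_0(\rmd x_0)\prod_{k}\bar\pi_k(\rmd u_k\mid x_k)=\frac{1}{\psi_0(x_0)}\prod_k G_k(x_k)\,\bfR(\rmd\omega)=\frac{1}{\psi_0(x_0)}\bfK(\rmd\omega).
\]
This is an identity of measures on $\Omega$, justified by checking it on product rectangles or by iterated integration. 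Since $\psi_0>0$ is finite, multiplying by $\psi_0(x_0)/Z_\kil$ yields $\bfQ^\FK=\mu_0^\FK\otimes\prod_k\bar\pi_k$, which is \eqref{eq:Q_FK}. The same display reads $\jref=\bfK/\psi_0(x_0)=(Z_\kil/\psi_0(x_0))\,\bfQ^\FK$, which gives \eqref{dPbar_dQFK}. Mutual absolute continuity holds since the density is strictly positive and finite.

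The only delicate point is the rigorous step identifying the product-of-kernels measure with the density-weighted $\bfR$. The kernels depend on the state $x_k(\omega)$, which is a measurable function of $(x_0,u_{0:k-1})$. We handle this by induction on the horizon: we integrate the innermost $u_{\ft-1}$ first against a bounded test function, with the Radon--Nikodym form of $\bar\pi_k$ with respect to $R_k$ applied at each stage. Everything else is a direct computation.
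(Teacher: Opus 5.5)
Your proposal is correct and follows essentially the same route as the paper. The paper first uses the first-step recursion to show that $s_k$ satisfies \eqref{eq:desire_boundary}--\eqref{eq:desire_bellman}. It then uses the telescoping Radon--Nikodym form \eqref{eq:pibar_2} of $\bar\pi_k$ with respect to $R_k$ to obtain $\bfK(\rmd\omega)=\psi_0(x_0)\,\mu_0(\rmd x_0)\prod_k\bar\pi_k(\rmd u_k\mid x_k)$, and normalizing by $Z_\kil$ then yields \eqref{eq:Q_FK} and \eqref{dPbar_dQFK}. Your extra care in identifying the kernel-product measure with the density-weighted $\bfR$, and in using $\psi_k>0$ for the telescoping, only makes explicit what the paper's proof leaves implicit.
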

\vspace{0.2cm}

\begin{proof}
    First, we calculate the survival probability $ s_k (x) $. Starting from the state $ x $ at time $ k $, the particle survives with probability $ G_k (x) = \Ee^{-q_k (x)} $, draws $ u_k \sim R_k $, moves to $ f_k (x, u_k) $, and then survives from time $ k+1 $ with probability $ s_{k+1} (f_k (x,u_k)) $. Hence, $ s_k $ satisfies $ s_\ft (x) \equiv 1 $ and
    \begin{align*}
        s_k (x) &= \Ee^{-q_k(x)} \int_\bbU s_{k+1} (f_k (x,u)) R_k (\rmd u) .
    \end{align*}
    This is exactly \eqref{eq:desire_boundary} and \eqref{eq:desire_bellman} defining $ \psi_k $, and thus, $ s_k = \psi_k $. Since $ Z_\kil := \bfK(\Omega) $ is the probability of survival over the horizon, we obtain
    \[
        Z_\kil = \int_{\bbR^n} s_0 (x) \mu_0 (\rmd x) = \int_{\bbR^n} \psi_0 (x) \mu_0 (\rmd x) .
    \]

    By \eqref{eq:pibar_2}, we have
    \begin{align*}
        \prod_{k=0}^{\ft-1} \bar{\pi}_k (\rmd u_k \mid x_k) = \frac{\exp ( - \sum_{k=0}^{\ft-1} q_k (x_k) )}{\psi_0 (x_0)} \prod_{k=0}^{\ft-1} R_k (\rmd u_k) .
    \end{align*}
    Therefore, it holds that
    \begin{align*}
        &\exp \Biggl( - \sum_{k=0}^{\ft-1} q_k (x_k) \Biggr) \mu_0 (\rmd x_0) \prod_{k=0}^{\ft-1} R_k (\rmd u_k) \\
        &= \psi_0 (x_0) \mu_0 (\rmd x_0) \prod_{k=0}^{\ft-1} \bar{\pi}_k (\rmd u_k \mid x_k) .
    \end{align*}
    Dividing by $ Z_\kil $ and using \eqref{eq:killed-joint-measure} yields \eqref{eq:Q_FK}. Lastly, by \eqref{eq:Q_FK} and the definition of $ \jref $ in \eqref{eq:barP_def}, we obtain \eqref{dPbar_dQFK}.
\end{proof}

Lemma~\ref{lem:s_psi} shows that $ \bfQ^{\rm FK} $ and $ \jref $ have the same Markov input kernels and differ only in their initial distributions. 
In particular, their Radon--Nikodym derivative depends only on $ x_0 $.
Since the SB problem fixes the initial marginal to $ \mu_0 $, this difference changes the KL objective only by a constant. This leads to the following equivalence.

\begin{theorem}[Feynman--Kac and killed references]\label{thm:killed_FK_equiv}
     Suppose that Assumptions~\ref{ass:cost}--\ref{ass:finite_kl} hold. Let $ \bbP_{0:\ft}^* $ be the path measure under an optimal policy of Problem~\ref{prob:MaxEnt}. 
     Define $ \bbQ_{0:\ft}^{\rm FK} $ and $ \bbK_{0:\ft} $ by \eqref{eq:kill_path_def}.
     Then, $ \bbP_{0:\ft}^* $ is the unique optimal solution of Problem~\ref{prob:SB} with $ \bbQ_{0:\ft} = \bbQ_{0:\ft}^\FK $. Moreover, $ \bbP_{0:\ft}^* $ is the unique optimal solution of the following problem:
     \begin{align} 
    &\underset{\mathbb{P}_{0:N} \in \calP((\bbR^{n})^{N+1})}{\textrm{\rm minimize}} && \quad \calD\! \left(\mathbb{P}_{0:N} \middle\| {\bbK}_{0:N}\right) ,  \label{eq:SB_objective_kill}\\
    &\quad \textrm{\rm ~subject to}  && (T_0)_{\#} \mathbb{P}_{0:N} = \mu_0, \ (T_\ft)_{\#} \mathbb{P}_{0:N} = \mu_\ft , \nonumber
\end{align}
where $ \mu_0, \mu_\ft \in \calP(\bbR^n) $ and $ T_k $ is given in Problem~\ref{prob:SB}.
\end{theorem}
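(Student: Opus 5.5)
The plan is to reduce both claims to Theorem~\ref{thm:maxent_solves_SB}. The key fact is that, on the set of path measures with initial marginal $\mu_0$, the objectives with references $\bbQ^\FK_{0:\ft}$, $\bbK_{0:\ft}$ and $\bar{\bbQ}_{0:\ft}$ differ only by finite additive constants. The first step is to push \eqref{dPbar_dQFK} of Lemma~\ref{lem:s_psi} forward through $T$. Since $\rmd\jref/\rmd\bfQ^\FK = Z_\kil/\psi_0(x_0)$ depends on $\omega$ only through $x_0 = T_0(T(\omega))$, for any bounded Borel $\varphi$ on $(\bbR^n)^{\ft+1}$ we get
\[
\int\varphi \,\rmd\bar{\bbQ}_{0:\ft} = \int \varphi(T(\omega))\,\frac{Z_\kil}{\psi_0(x_0(\omega))}\,\bfQ^\FK(\rmd\omega) = \int \varphi(x_{0:\ft})\,\frac{Z_\kil}{\psi_0(x_0)}\,\bbQ^\FK_{0:\ft}(\rmd x_{0:\ft}).
\]
Hence $\rmd\bar{\bbQ}_{0:\ft}/\rmd\bbQ^\FK_{0:\ft} = Z_\kil/\psi_0(x_0)$. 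Because $0<\psi_0\le 1$ by \eqref{eq:psi_bounded} and $Z_\kil\in(0,1]$, the two measures are mutually absolutely continuous. Likewise $\bbK_{0:\ft} = Z_\kil \bbQ^\FK_{0:\ft}$.

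Next I will fix any $\bbP_{0:\ft}$ with $(T_0)_\#\bbP_{0:\ft}=\mu_0$. Mutual absolute continuity gives $\bbP_{0:\ft}\ll\bar{\bbQ}_{0:\ft}$ if and only if $\bbP_{0:\ft}\ll\bbQ^\FK_{0:\ft}$, so infinite values match. When both are finite, the chain rule gives
\[
\kl{\bbP_{0:\ft}}{\bbQ^\FK_{0:\ft}} = \kl{\bbP_{0:\ft}}{\bar{\bbQ}_{0:\ft}} + \log Z_\kil - \int_{\bbR^n} V_0\,\rmd\mu_0,
\]
using $\log(Z_\kil/\psi_0) = \log Z_\kil + V_0$ and $x_0\sim\mu_0$. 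The constant is finite by Assumption~\ref{ass:finite_value_func}. This step is the main (though mild) obstacle. Splitting $\int\log(\rmd\bbP/\rmd\bbQ^\FK)\,\rmd\bbP$ into two integrals requires that $\log(Z_\kil/\psi_0(x_0))$ be $\bbP$-integrable. I will obtain this from $V_0\ge 0$ together with Assumption~\ref{ass:finite_value_func}. Since the integrand $\log(\rmd\bbP/\rmd\bar{\bbQ})$ has negative part with finite integral in any case, the identity also holds in the extended sense when the KL is $+\infty$. Therefore the two SB problems have the same feasible set, and their objectives differ by a finite constant on it.

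Finally, Theorem~\ref{thm:maxent_solves_SB} states that $\bbP^*_{0:\ft}$ is the unique minimizer for the reference $\bar{\bbQ}_{0:\ft}$ with a finite value. The constant shift then shows it is the unique minimizer for $\bbQ^\FK_{0:\ft}$. For the killed problem \eqref{eq:SB_objective_kill}, I will apply \eqref{eq:klg_kl} with $M=\bbK_{0:\ft}$, whose normalization is $\bbQ^\FK_{0:\ft}$ and whose total mass is $Z_\kil\in(0,1]$. This yields $\klg{\bbP_{0:\ft}}{\bbK_{0:\ft}} = \kl{\bbP_{0:\ft}}{\bbQ^\FK_{0:\ft}} - \log Z_\kil$, again a finite shift, which gives uniqueness and optimality of $\bbP^*_{0:\ft}$ there too. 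The optimal policy mentioned in the statement may be any optimal policy, not necessarily $\pi^*$. This causes no difficulty, because the optimal state-input distribution is unique by Theorem~\ref{thm:maxent_solves_SB}, so every optimal policy induces the same path measure $T_\#\bfP^* = \bbP^{\rm SB}_{0:\ft}$.
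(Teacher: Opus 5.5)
Your proposal is correct and follows essentially the same route as the paper: you push \eqref{dPbar_dQFK} through $T$, show that the three objectives differ by finite constants on the set of path measures with initial marginal $\mu_0$, and invoke Theorem~\ref{thm:maxent_solves_SB}; your explicit integrability check and your remark that every optimal policy induces the same path measure are welcome additions the paper leaves implicit. One small slip: since $\log(Z_\kil/\psi_0)=\log Z_\kil+V_0$, the constant is $\log Z_\kil+\int V_0\,\rmd\mu_0$ (plus, not minus), which is harmless because it is still a finite constant.
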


\begin{proof}
    Pushing \eqref{dPbar_dQFK} forward under $ T $, we have
    \begin{equation*}
        \frac{\rmd \bar{\bbQ}_{0:\ft}}{\rmd \bbQ_{0:\ft}^\FK} = \frac{Z_\kil}{\psi_0 (x_0)} .
    \end{equation*}
    Let $ P_{0:\ft} \in \calP((\bbR^{n})^{N+1}) $ be any path measure with $ (T_0)_\# P_{0:\ft} = \mu_0 $. 
    By the above equation, it holds that
    \begin{align}
        &\kl{P_{0:\ft}}{\bbQ_{0:\ft}^\FK} \nonumber\\
        &= \kl{P_{0:\ft}}{\bar{\bbQ}_{0:\ft}} + \bbE_{P_{0:\ft}} \! \left[ \log \frac{\rmd \bar{\bbQ}_{0:\ft}}{\rmd \bbQ_{0:\ft}^\FK} \right] \nonumber\\
        &= \kl{P_{0:\ft}}{\bar{\bbQ}_{0:\ft}} + \log Z_\kil + \int_{\bbR^n} V_0 (x) \mu_0 (\rmd x) .
    \end{align}
    Thus, it follows from \eqref{eq:klg_kl} that the three objectives $ \kl{P_{0:\ft}}{\bar{\bbQ}_{0:\ft}} $, $ \kl{P_{0:\ft}}{\bbQ_{0:\ft}^\FK} $, and $ \klg{P_{0:\ft}}{\bbK_{0:\ft}} $ differ only by constants on the set $ \{ P_{0:\ft} \in \calP((\bbR^n)^{\ft+1}) : (T_0)_{\#} P_{0:N} = \mu_0, \ (T_\ft)_{\#} P_{0:N} = \mu_\ft \} $, and hence have the same minimizer.
    Moreover, by Theorem~\ref{thm:maxent_solves_SB}, $ \bbP_{0:\ft}^* $ is the unique optimal solution of Problem~\ref{prob:SB} with $ \bbQ_{0:\ft} = \bar{\bbQ}_{0:\ft} $. This completes the proof.
\end{proof}

\begin{remark}[Linear-quadratic-Gaussian case]
Section~\ref{sec:reachability} gives reachability-based sufficient conditions for
Assumption~\ref{ass:finite_kl}, used in Theorems~\ref{thm:maxent_solves_SB} and~\ref{thm:killed_FK_equiv}, under compactly supported
endpoint distributions. In the linear-quadratic-Gaussian setting
of~\cite{Ito2022maximum}, however, Assumptions~\ref{ass:cost}--\ref{ass:finite_kl} hold under the reachability condition, even for Gaussian marginals with
unbounded support. Hence, Theorem~\ref{thm:killed_FK_equiv} shows that the quadratic state
cost considered in \cite{Ito2025maximum} admits a killing interpretation.
\hfill $ \diamondsuit $
\end{remark}

\section{Reachability conditions for finite-KL couplings}\label{sec:reachability}

Theorems~\ref{thm:maxent_solves_SB} and \ref{thm:killed_FK_equiv} rely on Assumption~\ref{ass:finite_kl}, which requires the existence of an endpoint coupling having finite KL divergence from the reference $ \bar{\bbQ}_{0,N} $. 
In this section, we derive reachability-based sufficient conditions for this assumption.

\subsection{Nonlinear systems}

For \(x\in\bbR^n\) and
\(\mathbf u=(u_0,\ldots,u_{N-1})\in\bbU^N\), let
\(x_k^{x,\mathbf u}\) denote the trajectory defined by
\begin{equation}\label{eq:state_u}
    x_0^{x,\mathbf u}=x,\qquad
    x_{k+1}^{x,\mathbf u}
    =f_k(x_k^{x,\mathbf u},u_k).
\end{equation}
We identify $ \bfu $ with the stacked vector $ [u_0^\top,\ldots,u_{N-1}^\top]^\top \in \bbR^{mN} $.
To relate the reference endpoint distribution to reachability of the system~\eqref{eq:system}, we work with the endpoint map from input sequences to terminal states, defined by
\[
    \calE_x(\mathbf u):=x_N^{x,\mathbf u}.
\]
The following regularity assumption ensures that this map is differentiable on the interior of the admissible input set and allows us to apply the coarea formula below.
\begin{assumption}[Regular input set and smooth dynamics]\label{ass:reg_input_smooth_dyn}
    The interior of the input set $ \calO := \mathop{\rm int} \bbU $ satisfies
    \begin{equation}
        \calO \neq \varnothing, \quad \mu_L^m (\bbU \setminus \calO) = 0 .
    \end{equation}
    In addition, for each $ k \in \bbra{0,\ft-1} $, $ f_k $ is $ C^1 $ on $ \bbR^n \times \calO $.
\end{assumption}

Set
\[
    \calA_k(x,\mathbf u)
    :=D_xf_k(x_k^{x,\mathbf u},u_k),
    \quad
    \calB_k(x,\mathbf u)
    :=D_\bfu f_k(x_k^{x,\mathbf u},u_k),
\]
where $ D_x f_k $ and $ D_\bfu f_k $ denote the Jacobian matrices of $ f_k $ with respect to its state and input arguments, respectively.
The matrices $\calA_k(x,\bfu)$ and $\calB_k(x,\bfu)$ define the
linearization of the system $ x_{k+1} = f_k(x_k,u_k) $ along the trajectory
generated by $(x,\bfu)$. 
Consider small perturbations $ \delta x_k $ and $ \delta u_k $ of the state and input around this trajectory.
To first order, they satisfy
\[
    \delta x_{k+1}
    =
    \calA_k(x,\bfu)\delta x_k
    +
    \calB_k(x,\bfu)\delta u_k.
\]
Define the state-transition matrix for this linearized system:
\begin{equation*}
    \Phi_{x,\bfu} (k,j) :=
    \begin{dcases}
        \calA_{k-1} (x, \bfu) \cdots \calA_j (x,\bfu), & k > j, \\
        I ,  &  k = j .
    \end{dcases}
\end{equation*}
For a fixed initial state, the variation of the terminal state induced
by perturbations of the input sequence is described by the
following expression:
    \begin{align}\label{eq:reachability_matrix}
        &D_\bfu \calE_x (\bfu) \nonumber\\
        &= \begin{bmatrix} 
            \Phi_{x,\bfu} (\ft,1) \calB_0 (x,\bfu),\ldots,\Phi_{x,\bfu} (\ft,\ft) \calB_{\ft-1} (x,\bfu) 
        \end{bmatrix} .
    \end{align}

Thus, $D_\bfu \calE_x(\bfu)$ plays the role of a finite-horizon
reachability matrix for the linearized system along the trajectory.
In particular, $\rank D_\bfu \calE_x(\bfu)=n$ means that
arbitrary first-order variations of the terminal state can be
generated by perturbing the input sequence around $\bfu$.
Since $ D_\bfu \calE_x (\bfu) \in \bbR^{n\times mN}  $, the rank condition $ \rank D_\bfu \calE_x (\bfu) = n $ can hold only if $ mN \ge n $.

Write
\begin{align*}
    \calJ_x (\bfu) &:= \sqrt{ \det \! \left(D_\bfu \calE_x (\bfu) D_\bfu \calE_x (\bfu)^\top \right) } .
\end{align*}
Now, we are ready to connect reachability of \eqref{eq:system} to Assumption~\ref{ass:finite_kl}. 
\begin{theorem}[Reachability-based finite-KL feasibility]\label{thm:uniform-reachability-finite-entropy}
    Suppose that Assumptions~\ref{ass:cost} and \ref{ass:reg_input_smooth_dyn} hold, $ d := mN \ge n $, and the cost functions $ q_k $ and $ \ell_k $ are continuous for any $ k \in \bbra{0,N-1} $.
    Let $ X, S \subset \bbR^n $ be compact sets such that
    \begin{equation}
        \mu_0 (X) = 1, \quad \mu_\ft (S) = 1.
    \end{equation}
    Assume that $ \mu_\ft $ has a bounded density function $ \rho_\ft $.
    Assume also that there exist a compact set $ C \subset \calO^N $ and a constant $ h_0 > 0 $ such that, for $ \mu_0 $-almost every $ x \in X $ and $ \mu_L^n $-almost every $ y \in S $,
    \begin{equation}\label{eq:Hausdorff_positive}
        \calH^{d-n} \! \left( \calE_x^{-1} (y) \cap C \cap \{\calJ_x > 0\} \right) \ge h_0 ,
    \end{equation}
    where \(\mathcal H^j\) denotes the \(j\)-dimensional Hausdorff measure.
    Then, Assumption~\ref{ass:finite_kl} holds.
    In particular, the coupling $ \gamma^\circ = \mu_0 \otimes \mu_\ft \in \Gamma(\mu_0, \mu_\ft) $ satisfies 
    \[
        \kl{\gamma^\circ}{\bar{\bbQ}_{0,\ft}} < \infty .
    \]
\end{theorem}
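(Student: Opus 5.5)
The plan is to show that $\gamma^\circ=\mu_0\otimes\mu_N$ is absolutely continuous with respect to $\bar{\bbQ}_{0,N}$, with a density bounded above on the relevant set; then $\kl{\gamma^\circ}{\bar{\bbQ}_{0,N}}<\infty$ follows. The key step is a uniform lower bound on the conditional endpoint density. Write $\bar{\bbQ}_{0,N}(\rmd x,\rmd y)=\mu_0(\rmd x)\,\bar{\bbQ}_{N|0}(\rmd y\mid x)$. By \eqref{eq:pibar_def}, the input law under $\bar\pi$ given $x_0=x$ has density with respect to $\prod_k R_k$ equal to $\exp(-\sum_k q_k(x_k^{x,\bfu}))/\psi_0(x)$; this is the telescoping identity already used in the proof of Lemma~\ref{lem:s_psi}. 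Since $\psi_0\le 1$, the joint input density with respect to Lebesgue measure on $\bbU^N$ is at least
\[
p_x(\bfu):=\prod_{k=0}^{N-1}\frac{\Ee^{-\ell_k(u_k)}}{Z_{\ell,k}}\exp\Bigl(-\sum_{k=0}^{N-1}q_k(x_k^{x,\bfu})\Bigr).
\]

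Next, lower bound $p_x$ on $C$ uniformly in $x\in X$. The set $C\subset\calO^N$ is compact and each $\ell_k$ is continuous, so $\sum_k\ell_k$ is bounded on $C$. The map $(x,\bfu)\mapsto x_k^{x,\bfu}$ is continuous (indeed $C^1$) on $\bbR^n\times\calO^N$, so the trajectories over the compact set $X\times C$ lie in a compact set, on which the continuous $q_k$ are bounded. Hence there is a constant $c_1>0$ with $p_x(\bfu)\ge c_1$ for all $(x,\bfu)\in X\times C$. In the same way, $\calJ_x(\bfu)$ is continuous on $X\times C$ and thus bounded above by some $J_{\max}<\infty$.

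Then apply the coarea formula to the $C^1$ map $\calE_x$ restricted to the open set $\calO^N$. For Borel $B\subseteq\bbR^n$,
\[
\bar{\bbQ}_{N|0}(B\mid x)\ge c_1\int_{C\cap\{\calJ_x>0\}}\bfone_B(\calE_x(\bfu))\,\rmd\bfu
=c_1\int_B\int_{\calE_x^{-1}(y)\cap C\cap\{\calJ_x>0\}}\frac{\rmd\calH^{d-n}}{\calJ_x}\,\rmd y .
\]
By \eqref{eq:Hausdorff_positive} and the bound $\calJ_x\le J_{\max}$, the inner integral is at least $h_0/J_{\max}$ for a.e.\ $y\in S$. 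So for $\mu_0$-a.e.\ $x$, the conditional law $\bar{\bbQ}_{N|0}(\cdot\mid x)$ restricted to $S$ dominates $c\,\mu_L^n|_S$ with $c:=c_1h_0/J_{\max}>0$. The main obstacle is the measurability bookkeeping: one needs $\{\calJ_x>0\}$ to be open, the coarea formula to hold for Lipschitz maps on the compact $C$ (or for $C^1$ maps on a neighborhood of $C$), and the domination to hold jointly in $(x,y)$ in the product sense. These are handled by taking the set on the right-hand side jointly measurable in $(x,y)$ and integrating over $\mu_0$.

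Finally, write $\rho_N$ for the bounded density of $\mu_N$. For Borel $A\subseteq X\times S$ we get $\bar{\bbQ}_{0,N}(A)\ge c\,(\mu_0\otimes\mu_L^n)(A)$, and therefore $\mu_0\otimes\mu_N\ll\bar{\bbQ}_{0,N}$. Since $\mu_0\otimes\mu_N$ is concentrated on $X\times S$ and $\mu_L^n$ is $\sigma$-finite, its density satisfies $\rmd\gamma^\circ/\rmd\bar{\bbQ}_{0,N}\le\|\rho_N\|_\infty/c$ a.e.\ on $X\times S$. The KL divergence is then bounded by $\log(\|\rho_N\|_\infty/c)<\infty$, which is Assumption~\ref{ass:finite_kl}.
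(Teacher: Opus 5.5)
Your proposal is correct and follows essentially the same route as the paper. Both arguments lower-bound the $\bar\pi$-input density on $X\times C$ using $\psi_0\le 1$ and compactness, then apply the coarea formula to the endpoint map $\calE_x$ to get $\bar K(x,\cdot)\ge c\,\mu_L^n(\cdot\cap S)$, and finally bound $\rmd\gamma^\circ/\rmd\bar{\bbQ}_{0,N}$ by $\|\rho_N\|_\infty/c$. The differences are minor: you apply the coarea formula directly to the nonnegative integrand with $1/\calJ_x$, whereas the paper truncates to $C\cap\{\calJ_x\ge 1/j\}$ and uses monotone convergence; and you close with the explicit bound $\log(\|\rho_N\|_\infty/c)$ instead of arguing that $r\log r$ is integrable.
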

\vspace{0.2cm}

\begin{proof}
    For $ x\in X $ and $ \bfu \in \calO^N $, set
    \begin{align}
        Q_x (\bfu) &:= \sum_{k=0}^{\ft-1} q_k (x_k^{x,\bfu}) , \\
        r^{(N)} (\bfu) &:= \prod_{k=0}^{\ft-1} r_k (u_k), \qquad r_k (u) := Z_{\ell,k}^{-1} \Ee^{-\ell_k (u)} .
    \end{align}
    By the compactness of $ X $ and $ C $ and the continuity of $ f_k $, $ q_k $, and $ \ell_k $, there exist constants $ \underline{r} > 0 $, $ \bar{\calJ} \in (0,\infty) $, and $ \bar{q} \in [0,\infty) $ such that for any $ x\in X $ and $ \bfu \in C \cap \{\calJ_x > 0\} $,
    \begin{equation}\label{eq:r_J_Q_bound}
        r^{(N)} (\bfu) \ge \underline{r}, \quad \calJ_x (\bfu) \le \bar{\calJ}, \quad Q_x (\bfu) \le \bar{q} .
    \end{equation}
    Indeed, let $ C_k := {\rm proj} _k (C) $, where $ {\rm proj}_k(\bfu) := u_k $. Since $ C_k $ is compact and $ r_k $ is positive and continuous on $ C_k $, $ \min_{u \in C_k} r_k(u) > 0 $.  
    The finiteness of $ \bar{q} $ and $ \bar{\calJ} $ follows from the fact that the maps $ (x,\bfu) \mapsto Q_x (\bfu) $ and $ (x,\bfu ) \mapsto \calJ_x (\bfu) $ are continuous on the compact set $ X \times C $.

    For each $ x \in \bbR^n $, let $ \jref^x $ be the distribution of the input sequence under the policy $ \bar{\pi} $ when $ x_0 = x $.
    For fixed $ x $, the input sequence $ u_{0:\ft-1} $ under $ \jref^x $ has a density function 
    \begin{align*}
        \bar{r}_x^{(N)} (\bfu) &:= \prod_{k=0}^{N-1} \left[ \Ee^{-q_k (x_k^{x,\bfu})} \frac{\psi_{k+1} (x_{k+1}^{x,\bfu})}{ \psi_k (x_k^{x,\bfu}) } r_k(u_k) \right] \\
        &= \frac{\exp(-Q_x(\bfu))}{\psi_0 (x)} r^{(N)} (\bfu) , \quad \bfu \in \calO^\ft ,
    \end{align*}
    where the first equality follows from \eqref{eq:pibar_2}.
    Since $ 0 < \psi_0 (x) \le 1 $, \eqref{eq:r_J_Q_bound} implies
    \[
        \bar{r}_x^{(N)} (\bfu) \ge \Ee^{-\bar{q}} \underline{r}, \quad \forall x \in X, \ \forall \bfu \in C .
    \]

    Let $ \bar{K} (x,\cdot) $ be the distribution of $ x_\ft $ under $ \jref^x $.
    By definition, for any $ A \in \scrB(\bbR^n) $, we have
    \begin{align}
        \bar{K} (x, A) &= \int_{\calE_x^{-1} (A)} \bar{r}_x^{(N)} (\bfu)\rmd \bfu \nonumber\\
        &\ge \int_{\calE_x^{-1} (A)\cap C \cap \{\calJ_x > 0 \}} \bar{r}_x^{(N)} (\bfu)\rmd \bfu . \nonumber
    \end{align}
    Since $C\subset\calO^\ft$ is compact and $\calE_x$ is $C^1$ on $\calO^\ft$,
its restriction to a neighborhood of $C$ admits a Lipschitz
extension to $\bbR^d$. For $j\ge1$, let
$
    C_j:=C\cap\{\calJ_x\ge1/j\}
$.
Then,
$
    g_j := \frac{\bar r_x^{(\ft)}}{\calJ_x}
    \mathbf 1_{\calE_x^{-1}(A)\cap C_j}
$
is integrable. Hence, the coarea formula
~\cite[Theorem~3.11]{EvansGariepy2015} applies and yields
\begin{align}
&\int_{\calE_x^{-1}(A)\cap C_j}
    \bar r_x^{(\ft)}(\bfu) \rmd\bfu \nonumber\\
&\quad =
\int_A
\int_{\calE_x^{-1}(y)\cap C_j}
    \frac{\bar r_x^{(\ft)}(\bfu)}{\calJ_x(\bfu)}
    \calH^{d-n}(\rmd\bfu) \rmd y .
\label{eq:coarea_truncated}
\end{align}
Since $C_j\uparrow C\cap\{\calJ_x>0\}$, letting $j\to\infty$ and
applying the monotone convergence theorem, we obtain
\begin{align}
&\int_{\calE_x^{-1}(A)\cap C\cap\{\calJ_x>0\}}
    \bar r_x^{(\ft)}(\bfu)\rmd\bfu \nonumber\\
&\quad =
\int_A
\int_{\calE_x^{-1}(y)\cap C\cap\{\calJ_x>0\}}
    \frac{\bar r_x^{(\ft)}(\bfu)}{\calJ_x(\bfu)}
    \calH^{d-n}(\rmd\bfu)\rmd y .
\label{eq:coarea_application}
\end{align}
Therefore, for $ \mu_0 $-almost every $ x \in X $,
    \begin{align}
        \bar{K} (x,A) &\ge \frac{\Ee^{-\bar{q}} \underline{r}}{\bar{\calJ}} \int_{A\cap S} \calH^{d-n} \! \left(\calE_x^{-1} (y) \cap C \cap \{\calJ_x > 0 \} \right) \rmd y \nonumber\\
        &\ge c_0 \mu_L^n (A \cap S),
    \end{align}
    where we used \eqref{eq:Hausdorff_positive} and
    $
        c_0 := \frac{\Ee^{-\bar{q}} \underline{r} h_0}{\bar{\calJ}} > 0
    $.

    Since $ \jref $ has initial marginal $ \mu_0 $, its endpoint distribution disintegrates as
\begin{equation}\label{eq:mu_0_K_Q}
    \bar{\bbQ}_{0,\ft} (\rmd x ,\rmd y) = \mu_0 (\rmd x) \bar{K} (x, \rmd y) .
\end{equation}
    Thus, we have
    \begin{equation}\label{eq:Qbar_lower}
        \bar{\bbQ}_{0,\ft} (\rmd x, \rmd y) \ge c_0 \mu_0 (\rmd x) \bfone_{S} (y) \rmd y .
    \end{equation}
    It follows that $ M(\rmd x, \rmd y) := \mu_0 (\rmd x) \bfone_{S} (y) \rmd y \ll \bar{\bbQ}_{0,\ft} $.
    Moreover, since \(\mu_N(S)=1\) and \(\mu_N(\rmd y)=\rho_N(y)\rmd y\), the coupling \(\gamma^\circ=\mu_0\otimes\mu_N\) satisfies
    \[
        \gamma^\circ (\rmd x, \rmd y) = \mu_0 (\rmd x) \bfone_{S}(y) \rho_\ft (y) \rmd y = \rho_N(y) M(\rmd x, \rmd y) .
    \]
    Therefore, $ \gamma^\circ \ll \bar{\bbQ}_{0,\ft} $. In addition, by the boundedness of $ \rho_\ft $ and \eqref{eq:Qbar_lower}, we have
  \[
    \frac{\rmd \gamma^\circ}{\rmd \bar{\bbQ}_{0,N}}(x,y)
    \le \frac{\rho_N(y)}{c_0}
    \le \frac{\|\rho_N\|_\infty}{c_0},
    \qquad \gamma^\circ\text{-a.s.}
\]
This implies that $ r := \rmd \gamma^\circ / \rmd \bar{\bbQ}_{0,\ft} $ is essentially bounded. Since the function $ s \mapsto s \log s $ is bounded below on $ [0,\infty) $ and bounded above on every bounded interval, $ r \log r $ is integrable with respect to $ \bar{\bbQ}_{0,\ft} $. Hence, we obtain
\[
    \kl{\gamma^\circ}{\bar{\bbQ}_{0,\ft}} = \int r \log r \rmd \bar{\bbQ}_{0,\ft} < \infty .
\]
This completes the proof.
\end{proof}

We now interpret the condition~\eqref{eq:Hausdorff_positive} in terms of the reachability of \eqref{eq:system}.
Recall that $
    \calE_x^{-1}(y)
$
is the set of all input sequences that steer \(x\) to \(y\) at time
\(N\).
The condition
$
    \calJ_x(\mathbf u)>0
$
is equivalent to
$
    \rank D_{\mathbf u}\calE_x(\mathbf u)=n
$.
Thus, a control sequence in
\[
    \calE_x^{-1}(y)\cap C\cap\{\calJ_x>0\}
\]
not only steers \(x\) to \(y\), but does so at a regular point of the
finite-horizon endpoint map.  In other words, infinitesimal
variations of the whole input sequence around such a control can
generate arbitrary first-order variations of the terminal state.
This condition concerns the accumulated input-to-state effect over the
whole horizon.

The compact set \(C\subset\calO^N\) restricts attention to input
sequences that remain in a common compact subset of the interior of
the admissible input set.  Thus, the condition does not rely on
steering inputs approaching the input boundary or escaping to
arbitrarily large values.  This uniform interiority, together with compactness, yields uniform bounds on the reference input density $ r^{(N)} $, the accumulated state cost $ Q_x $, and the Jacobian factor $ \calJ_x $ over the relevant control sequences, which are used in applying the coarea formula.

At a regular point, the level set
\(\calE_x^{-1}(y)\) is locally a \((d-n)\)-dimensional manifold by the regular level set theorem.
The \(k\)-dimensional Hausdorff measure \(\calH^k\) provides a natural notion of
\(k\)-dimensional size for subsets of Euclidean space, including lower-dimensional
sets with zero ambient Lebesgue measure.
Accordingly,
\[
    \calH^{d-n}\!\left(
        \calE_x^{-1}(y)\cap C\cap\{\calJ_x>0\}
    \right)
\]
measures the \((d-n)\)-dimensional size of the family of regular
interior input sequences that steer \(x\) to \(y\).
The condition~\eqref{eq:Hausdorff_positive}
therefore requires this family to have a uniformly nonvanishing size
over the prescribed initial and terminal supports.  This is stronger
than ordinary reachability, which only requires the set
\(\calE_x^{-1}(y)\) to be nonempty.

\begin{remark}[Reachability and finite-cost feasibility]
Point-to-point reachability alone does not guarantee finite-cost feasibility.
For example, consider $N=1$, 
\[
    x_1 = \frac{1}{2} + \frac{1}{\pi} \arctan u_0, \quad \ell_0 (u) = \frac{u^2}{2}, \quad q_0 \equiv 0,
\]
and let $ \mu_1 $ be uniform on $ (0,1) $.
Then, the associated reference $ R_0 $ is the standard Gaussian distribution, while the unique input distribution that steers the state to $ \mu_1 $ is the standard Cauchy distribution. However, the KL divergence between the Cauchy distribution and the Gaussian distribution is infinite.
Hence, distribution steering is feasible, whereas finite-cost MaxEnt distribution steering is not.
\hfill $ \diamondsuit $
\end{remark}

Theorem~\ref{thm:uniform-reachability-finite-entropy} requires every initial state in the support of $ \mu_0 $ to reach every target state on the support of $ \mu_\ft $. This can be restrictive especially under a bounded input set $ \bbU $. The following result relaxes the assumption by partitioning the initial and target supports into pairs of regions carrying the same probability mass and requiring regular interior reachability only within each pair.
The proof is given in Appendix~\ref{app:reachability_block}.
\begin{corollary}[Blockwise reachability]\label{cor:reachability_block}
    Suppose that the assumptions of Theorem~\ref{thm:uniform-reachability-finite-entropy} hold except that the condition~\eqref{eq:Hausdorff_positive} is replaced by the following condition.
    Let $ \{X_i\}_{i=1}^M $ and $ \{S_i\}_{i=1}^M $ be finite Borel partitions of $ X $ and $ S $, respectively, such that
    \begin{equation}\label{eq:mass_identity}
        w_i := \mu_0 (X_i) = \mu_N (S_i) > 0, \quad \forall i \in \bbra{1,M} .
    \end{equation}
    For each $ i \in \bbra{1,M} $, assume that there exist a compact set $ C_i \subset \calO^N $ and a constant $ h_i > 0 $ such that, for $ \mu_0 $-almost every $ x \in X_i $ and $ \mu_L^n $-almost every $ y\in S_i $,
    \begin{equation}\label{eq:block_reachability}
        \calH^{d-n} \! \left( \calE_x^{-1} (y) \cap C_i \cap \{ \calJ_x > 0\} \right) \ge h_i .
    \end{equation}
    Then, Assumption~\ref{ass:finite_kl} holds.
\end{corollary}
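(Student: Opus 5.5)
The plan is to build a blockwise product coupling and repeat the argument of Theorem~\ref{thm:uniform-reachability-finite-entropy} block by block. Define
\[
    \gamma^\circ(\rmd x,\rmd y) := \sum_{i=1}^M \frac{1}{w_i}\,\bfone_{X_i}(x)\bfone_{S_i}(y)\,\mu_0(\rmd x)\,\mu_\ft(\rmd y).
\]
First I will check that $\gamma^\circ \in \Gamma(\mu_0,\mu_\ft)$, which follows from the mass identity \eqref{eq:mass_identity}. Its first marginal is $\sum_i \bfone_{X_i}\mu_0\,\mu_\ft(S_i)/w_i = \mu_0$, because $\{X_i\}$ partitions $X$ and $\mu_0(X)=1$. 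The second marginal is $\mu_\ft$ by the symmetric computation, using $\mu_\ft(S)=1$. It then remains to show that $\rmd\gamma^\circ/\rmd\bar{\bbQ}_{0,\ft}$ is essentially bounded, exactly as at the end of the proof of Theorem~\ref{thm:uniform-reachability-finite-entropy}.

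Second, for each fixed $i$, I will rerun the lower-bound part of that proof with $X$ replaced by $X_i$ (a subset of the compact set $X$), $C$ replaced by $C_i$, and $S$ replaced by $S_i$. Compactness of $X\times C_i$ and continuity again give constants $\underline r_i>0$, $\bar\calJ_i<\infty$ and $\bar q_i<\infty$ as in \eqref{eq:r_J_Q_bound}. These yield the density bound $\bar r_x^{(\ft)}\ge \Ee^{-\bar q_i}\underline r_i$ on $C_i$. The truncated coarea argument \eqref{eq:coarea_truncated}--\eqref{eq:coarea_application} is unchanged. Combining it with \eqref{eq:block_reachability}, applied to $A\cap S_i$ in place of $A\cap S$, gives, for $\mu_0$-a.e.\ $x\in X_i$,
\[
    \bar K(x,A)\ge c_i\,\mu_L^n(A\cap S_i), \qquad c_i:=\frac{\Ee^{-\bar q_i}\underline r_i h_i}{\bar\calJ_i}>0 .
\]
By the disintegration \eqref{eq:mu_0_K_Q}, this gives $\bar{\bbQ}_{0,\ft}(\rmd x,\rmd y)\ge c_i\,\bfone_{X_i}(x)\bfone_{S_i}(y)\,\mu_0(\rmd x)\,\rmd y$ on $X_i\times S_i$. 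Since the rectangles $X_i\times S_i$ are disjoint, summing over $i$ yields $\bar{\bbQ}_{0,\ft}\ge c\sum_i \bfone_{X_i\times S_i}\,\mu_0\otimes\mu_L^n$ with $c:=\min_i c_i>0$.

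Third, writing $\mu_\ft(\rmd y)=\rho_\ft(y)\rmd y$, we get $\gamma^\circ\ll\bar{\bbQ}_{0,\ft}$ with
\[
    \frac{\rmd\gamma^\circ}{\rmd\bar{\bbQ}_{0,\ft}}\le \max_i\frac{\|\rho_\ft\|_\infty}{w_i c}<\infty .
\]
The concluding $r\log r$ integrability argument then gives $\kl{\gamma^\circ}{\bar{\bbQ}_{0,\ft}}<\infty$, which is Assumption~\ref{ass:finite_kl}.

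The only delicate point is the measure-theoretic bookkeeping: the lower bound holds only on each block $X_i\times S_i$, so absolute continuity must be checked blockwise. This works precisely because $\gamma^\circ$ is supported on $\bigcup_i X_i\times S_i$; everything else is a verbatim reuse of the earlier proof.
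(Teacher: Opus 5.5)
Your proposal is correct and matches the paper's proof essentially step for step. It uses the same blockwise product coupling $\gamma^\circ$, the same per-block rerun of the compactness and coarea argument giving $\bar K(x,A)\ge c_i\,\mu_L^n(A\cap S_i)$ for $\mu_0$-a.e.\ $x\in X_i$, and the same essential-boundedness conclusion for $\rmd\gamma^\circ/\rmd\bar{\bbQ}_{0,\ft}$. The only cosmetic difference is that you merge the blocks into one global lower bound with $c=\min_i c_i$, whereas the paper bounds the density block by block by $\|\rho_N\|_\infty/(w_i c_i)$ and then uses that $M$ is finite.
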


\subsection{Fully actuated control-affine systems}\label{subsec:control_affine}
In this subsection, we focus on the following control-affine system
\begin{equation}\label{eq:control_affine}
    f_k(x,u) = \bar{f}_k (x) + g_{k} (x) u, 
\end{equation}
where $ \bar{f}_k : \bbR^n \to \bbR^n $ and $ g_k : \bbR^n \to \bbR^{n\times m} $. We assume that at time $ k = N-1 $, the system is fully actuated, i.e., $ m \ge n $ and
\begin{equation}\label{eq:full_actuation}
    g_{N-1} (x) g_{N-1} (x)^\top \succ 0 , \quad \forall x \in \bbR^n .
\end{equation}
Then, the existence of a coupling with finite KL-divergence is guaranteed as follows.
The proof is deferred to Appendix~\ref{app:control_affine}.
\begin{corollary}[Fully actuated control-affine systems]\label{cor:control_affine}
    Suppose that the assumptions of Theorem~\ref{thm:uniform-reachability-finite-entropy} hold except for \eqref{eq:Hausdorff_positive}, and let $ \bbU = \bbR^m $. Suppose also that $ \{f_k\} $ are given by \eqref{eq:control_affine} and satisfy \eqref{eq:full_actuation}.
    Then, Assumption~\ref{ass:finite_kl} holds.
\end{corollary}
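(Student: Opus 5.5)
We verify the uniform regular-reachability condition~\eqref{eq:Hausdorff_positive} of Theorem~\ref{thm:uniform-reachability-finite-entropy} directly, using only the last input $u_{N-1}$ to reach the target and fixing the earlier inputs in a compact box. Here $\bbU=\bbR^m$, so $\calO^N=\bbR^{mN}$ and any compact set is admissible. Choose $C := C' \times B$, where $C'=\bar B_1(0)^{N-1}\subset\bbR^{m(N-1)}$ is a closed ball product and $B\subset\bbR^m$ is a sufficiently large closed ball, to be chosen. Let $Y:=\{x_{N-1}^{x,\bfu}: x\in X,\ (u_0,\dots,u_{N-2})\in C'\}$. This set is compact by continuity of the $f_k$.

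The first step is the rank condition. Differentiating \eqref{eq:reachability_matrix} in the last block gives $D_{u_{N-1}}\calE_x(\bfu)=g_{N-1}(x_{N-1})$. By \eqref{eq:full_actuation} this block has rank $n$, so $\calJ_x(\bfu)>0$ for every $(x,\bfu)$. The constraint $\{\calJ_x>0\}$ is therefore vacuous.

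The second step describes the fiber. For fixed $x$ and $u'=(u_0,\dots,u_{N-2})\in C'$, set $z=x_{N-1}^{x,u'}\in Y$. The set of $u_{N-1}$ with $\calE_x(u',u_{N-1})=y$ is the affine subspace $L_{z,y}:=\{v: g_{N-1}(z)v=y-\bar f_{N-1}(z)\}$, of dimension $m-n$. Its minimum-norm point is $v^\dagger=g_{N-1}(z)^\top(g_{N-1}(z)g_{N-1}(z)^\top)^{-1}(y-\bar f_{N-1}(z))$. By compactness of $Y\times S$ and continuity together with positive definiteness of $gg^\top$, we have $\|v^\dagger\|\le \rho$ for some uniform $\rho$. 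Take $B=\bar B_{\rho+1}(0)$. Then $L_{z,y}\cap B$ contains an $(m-n)$-dimensional ball of radius $1$ centered at $v^\dagger$.

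The third step, which is the main obstacle, bounds the Hausdorff measure of $\calE_x^{-1}(y)\cap C$ from below uniformly. This fiber is a graph over $C'$. Writing $v(u')=v^\dagger(u')+P(u')w$, with $P(u')$ an orthonormal basis of $\ker g_{N-1}(z(u'))$ and $\|w\|\le1$, the fiber contains the image of $C'\times\bar B_1^{m-n}$. Projecting this image onto the coordinates $(u',\ P_0^\top v)$ would reduce measure, but $P$ varies with $u'$. A cleaner route is the coarea-type bound $\calH^{d-n}(F)\ge \int_{C'}\calH^{m-n}(F_{u'})\,\rmd u'$ for a set $F$ whose slices are $F_{u'}$, which follows from Eilenberg's inequality applied to the $1$-Lipschitz projection $\bfu\mapsto u'$. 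Each slice is $L_{z,y}\cap B$, which has $\calH^{m-n}$-measure at least $\omega_{m-n}$. Hence $\calH^{d-n}\ge \omega_{m-n}\mu_L^{m(N-1)}(C')=:h_0>0$ for all $x\in X$ and $y\in S$. If Eilenberg's inequality only gives the bound up to a dimensional constant, $h_0$ is simply reduced by that constant. When $m=n$, slices are single points with $\calH^0=1$, and the same argument applies.

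With \eqref{eq:Hausdorff_positive} established for $C$ and $h_0$, and $\bfu$-continuity and compactness given by the hypotheses, Theorem~\ref{thm:uniform-reachability-finite-entropy} yields Assumption~\ref{ass:finite_kl}.
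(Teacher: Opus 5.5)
Your proposal is correct and follows essentially the same route as the paper. The paper also fixes the first $N-1$ inputs in a compact set $D$ of positive Lebesgue measure and uses the minimum-norm last input $g_{N-1}(z)^\top(g_{N-1}(z)g_{N-1}(z)^\top)^{-1}(y-\bar f_{N-1}(z))$ plus a ball in $\ker g_{N-1}(z)$; it gets $\calJ_x>0$ from the full-rank last block of $D_\bfu\calE_x$, and it lower-bounds $\calH^{d-n}$ via Eilenberg's inequality for the $1$-Lipschitz projection onto the first $N-1$ inputs, with a dimensional constant $c_{p,q}$ exactly as you anticipated.
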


\subsection{Reachable linear systems}
In the rest of this section, we consider the linear system
\begin{equation}\label{eq:linear_sys}
    f_k (x,u) = A_k x + B_k u.
\end{equation}
Then, the endpoint map $ \calE_x $ is affine and its Jacobian is the finite-horizon reachability matrix. This structure yields a more explicit sufficient condition for Corollary~\ref{cor:reachability_block}.
For $ 0\le j \le k \le N $, define the state-transition matrix
\begin{equation}
    \Phi_A (k,j) :=
    \begin{dcases}
        A_{k-1} A_{k-2} \cdots A_j, & k > j, \\
        I, & k = j .
    \end{dcases}
\end{equation}
Define the reachability matrix 
\begin{equation}
    \calR_N := [ \Phi_A (N,1)B_0, \ldots, \Phi_A (N,N)B_{N-1} ] 
\end{equation}
and let $ \bfu = [u_0^\top,\ldots,u_{N-1}^\top]^\top $. Then, 
\begin{equation}\label{eq:endpoint_map_linear}
    \calE_x (\bfu) = \Phi_A (N,0) x + \calR_N \bfu .
\end{equation}

When \(\mathcal R_N\) has full row rank, every endpoint constraint $ (x_0,x_N) = (x,y) $ admits the affine family of steering inputs
\[
\mathcal R_N^\dagger
\bigl(y-\Phi_A(N,0)x\bigr)+\ker\mathcal R_N.
\]
The next result shows that Assumption~\ref{ass:finite_kl} follows if a uniformly thick subset of this family remains in the interior of the admissible input set for each matched pair of endpoint regions.
The proof is shown in Appendix~\ref{app:linear_reachability}.
\begin{corollary}[Reachable linear systems]\label{cor:linear_reachability}
    Suppose that the assumptions of Theorem~\ref{thm:uniform-reachability-finite-entropy} hold except for \eqref{eq:Hausdorff_positive}, and let $ \{f_k\} $ be given by \eqref{eq:linear_sys} with $ \rank \calR_N = n $.
    Let $ \{X_i\}_{i=1}^M $ and $ \{S_i\}_{i=1}^M $ be finite Borel partitions of $ X $ and $ S $ satisfying \eqref{eq:mass_identity}.
    Let
    $
        \calR_N^\dagger := \calR_N^\top (\calR_N \calR_N^\top)^{-1}
    $
    be the Moore--Penrose right inverse, and let $ \calN_{\calR} := \mathop{\rm ker} \calR_N $. Assume that $ \mu_N (\rmd y) = \rho_N (y) \rmd y $, where $ \rho_\ft $ is bounded.
    For each $ i \in \bbra{1,M} $, suppose that there exists $ \delta_i > 0 $ such that
    \begin{align}
        C_i &:= \Bigl\{ \calR_N^\dagger (y - \Phi_A (N,0) x)  + v : \nonumber\\
        &\qquad  x\in \overline{X}_i, \ y\in \overline{S}_i, \ v\in \calN_{\calR}, \ \| v \| \le \delta_i  \Bigr\} \subseteq \calO^N , \label{eq:linear_condition}
    \end{align} 
    where $ \overline{X}_i $ and $ \overline{S}_i $ are the closures of $ X_i $ and $ S_i $, respectively.
    Then, Assumption~\ref{ass:finite_kl} holds.
\end{corollary}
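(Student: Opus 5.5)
The plan is to reduce the statement to Corollary~\ref{cor:reachability_block}, which asks only for the blockwise Hausdorff bound \eqref{eq:block_reachability}. I will take the compact sets $C_i$ to be those in \eqref{eq:linear_condition}. Two facts make this reduction easy for linear systems. First, the endpoint map \eqref{eq:endpoint_map_linear} is affine, so $D_\bfu \calE_x(\bfu) = \calR_N$ for every $(x,\bfu)$. Hence $\calJ_x(\bfu) = \sqrt{\det(\calR_N\calR_N^\top)} > 0$ everywhere by $\rank \calR_N = n$, and the set $\{\calJ_x>0\}$ is all of $\bbR^{d}$. Second, the remaining assumptions of Theorem~\ref{thm:uniform-reachability-finite-entropy} (Assumption~\ref{ass:reg_input_smooth_dyn}, continuity of costs, compact supports, bounded $\rho_\ft$) are inherited directly; linear $f_k$ is $C^1$.

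The first step is to check that each $C_i$ is a compact subset of $\calO^N$. Inclusion in $\calO^N$ is the hypothesis \eqref{eq:linear_condition}. For compactness, note that $C_i$ is the image of the compact set $\overline{X}_i\times\overline{S}_i\times\{v\in\calN_\calR:\|v\|\le\delta_i\}$ under a continuous map. Here $\overline{X}_i\subseteq X$ and $\overline{S}_i\subseteq S$ are closed subsets of compact sets.

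The second step is to compute the fiber. Fix $x\in X_i$ and $y\in S_i$, and set $\bfu_0 := \calR_N^\dagger(y-\Phi_A(N,0)x)$. Then
\[
\calE_x^{-1}(y) = \bfu_0 + \calN_\calR ,
\]
which is an affine subspace of dimension $d-n$. Moreover,
\[
\calE_x^{-1}(y)\cap C_i \supseteq \{\bfu_0+v : v\in\calN_\calR,\ \|v\|\le\delta_i\}.
\]
This set is a closed $(d-n)$-dimensional Euclidean ball of radius $\delta_i$ in the affine subspace. On a $(d-n)$-dimensional affine subspace, $\calH^{d-n}$ coincides with $(d-n)$-dimensional Lebesgue measure. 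Therefore
\[
\calH^{d-n}\bigl(\calE_x^{-1}(y)\cap C_i\cap\{\calJ_x>0\}\bigr)\ge \omega_{d-n}\delta_i^{d-n} =: h_i>0,
\]
where $\omega_{d-n}$ is the volume of the unit ball. This bound holds for every $x\in X_i$ and $y\in S_i$, not merely almost every. The degenerate case $d=n$ needs a separate word: there $\calN_\calR=\{0\}$, $\calH^0$ is counting measure, and the fiber contains the point $\bfu_0$, so we may take $h_i=1$.

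With \eqref{eq:block_reachability} verified for each block and \eqref{eq:mass_identity} assumed, Corollary~\ref{cor:reachability_block} yields Assumption~\ref{ass:finite_kl}. The only mildly delicate point is the normalization identifying $\calH^{d-n}$ with Lebesgue measure on affine subspaces, which I will cite as standard (e.g., \cite{EvansGariepy2015}). The normalization does not matter anyway, since only positivity of $h_i$ is used.
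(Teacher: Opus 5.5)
Your proposal is correct and follows essentially the same route as the paper. You take the $C_i$ from \eqref{eq:linear_condition}, use $D_\bfu\calE_x=\calR_N$ to get $\calJ_x>0$ everywhere, place the $(d-n)$-dimensional ball $\calR_N^\dagger(y-\Phi_A(N,0)x)+\{v\in\calN_\calR:\|v\|\le\delta_i\}$ inside $\calE_x^{-1}(y)\cap C_i\cap\{\calJ_x>0\}$, and invoke Corollary~\ref{cor:reachability_block}; your explicit value $h_i=\omega_{d-n}\delta_i^{d-n}$ and your separate treatment of $d=n$ are only harmless refinements of the paper's appeal to translation invariance of $\calH^{d-n}$.
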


Corollary~\ref{cor:linear_reachability} separates the reachability and input-constraint requirements: full row rank of \(\mathcal R_N\) guarantees regular finite-horizon reachability, while the condition~\eqref{eq:linear_condition} ensures that a uniformly nonvanishing family of steering inputs remains away from the boundary of the admissible input set.

\begin{remark}[Unconstrained input set]
Suppose that \(\bbU=\bbR^m\), so that
\(\calO^N=\bbR^{mN}\), and that \(\mathcal R_N\) has full row rank.
Then, the condition \eqref{eq:linear_condition} is satisfied.
Indeed, let \(X,S\subset\bbR^n\) be compact sets such that
\(\mu_0(X)=\mu_N(S)=1\).
For any \(\delta>0\), define $ C $ by \eqref{eq:linear_condition} with $ M = 1 $.
Since
\(\calO^N=\bbR^{mN}\), we trivially have
\(C\subseteq\calO^N\).
Thus, the condition~\eqref{eq:linear_condition} holds with a single pair
\((X,S)\).
Consequently, under the remaining assumptions of Corollary~\ref{cor:linear_reachability}, in particular the compact-support
and bounded-density assumptions on the endpoint distributions, the reachability $ \rank\mathcal R_N=n $ ensures that Assumption~\ref{ass:finite_kl} holds.
\hfill $ \diamondsuit $
\end{remark}

\section{Conclusion}\label{sec:conclusion}
We established an SB characterization of MaxEnt distribution steering for discrete-time nonlinear systems with general state and input costs. By reformulating the control objective as a KL divergence on the \red{state-input} space, we showed that the optimal controlled state process is an SB and that the bridge can be realized by a Markov control policy. We also interpreted the state cost through killing and Feynman--Kac weighting, and characterized finite-cost feasibility in terms of a finite-KL endpoint coupling. We further derived reachability-based sufficient conditions for this feasibility, together with specializations to fully actuated control-affine and linear systems. A nonlinear numerical example illustrated the resulting SB construction.

Future work includes developing scalable numerical methods for nonlinear and high-dimensional systems, where direct discretization of the state space becomes computationally prohibitive.
It is also of interest to derive less conservative and more readily verifiable conditions for finite-cost feasibility.

\section*{References}

\vspace{-\baselineskip}
\bibliographystyle{IEEEtran}
\bibliography{MaxEnt_generalSB}

%%%%%%%%%%%%%%%%%%%%%%%%%%%%%%%%%%%%%%%%%%%%%%%%%%%%%%%%%%%%%%%%%%%%%%%%%%%%%%%%
\appendices
\section{Proof of Proposition~\ref{prop:performance_decompose}}\label{app:performance_decompose}

Before the proof, we introduce the following lemma.
\begin{lemma}\label{lem:feynman-kac_density}
    The distributions $ \jref $ and $ \bfR $ in \eqref{eq:barP_def} satisfy
    \[
        \jref \ll \bfR, \quad \bfR \ll \jref ,
    \]  
    and
    \begin{equation}\label{eq:dPbar_dR}
        \frac{\rmd \jref}{\rmd \bfR} = \frac{\exp\! \left( - \sum_{k=0}^{\ft-1} q_k (x_k) \right)}{\psi_0 (x_0)} .
    \end{equation}
\end{lemma}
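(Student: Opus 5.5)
The plan is to compute the Radon--Nikodym derivative directly from the product structure of $\jref = \bfP^{\bar\pi}$ and $\bfR = \bfP^{R}$. By \eqref{eq:Ppi_def}, both measures share the initial factor $\mu_0(\rmd x_0)$ and are built from Markov kernels applied along the same deterministic trajectory $x_k = x_k(\omega)$. Hence it suffices to identify the one-step density $\rmd\bar\pi_k(\cdot\mid x)/\rmd R_k$ and multiply these densities along the trajectory.

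The first step is the one-step density. From \eqref{eq:pibar_def} and the Bellman recursion \eqref{eq:desire_bellman}, the normalizing constant is
\[
\int_\bbU \psi_{k+1}(f_k(x,v))R_k(\rmd v) = \Ee^{q_k(x)}\psi_k(x).
\]
This gives
\[
\frac{\rmd \bar\pi_k(\cdot\mid x)}{\rmd R_k}(u) = \Ee^{-q_k(x)}\frac{\psi_{k+1}(f_k(x,u))}{\psi_k(x)},
\]
which is the form labelled \eqref{eq:pibar_2} in the paper. By \eqref{eq:psi_bounded}, this density is strictly positive and finite for every $(x,u)$, and $\psi_k(x)>0$ ensures the division is legitimate.

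The second step multiplies the one-step densities along $\omega=(x_0,u_{0:N-1})$, using $x_{k+1}=f_k(x_k,u_k)$. The product
\[
\prod_{k=0}^{N-1}\Ee^{-q_k(x_k)}\frac{\psi_{k+1}(x_{k+1})}{\psi_k(x_k)}
\]
telescopes. Since $\psi_N\equiv 1$ by \eqref{eq:desire_boundary}, it reduces to $\exp(-\sum_k q_k(x_k))/\psi_0(x_0)$.

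To make the product identification rigorous, I would show for rectangles $A_0\times A_1\times\cdots\times A_{N-1}$, by iterated integration (Fubini/Tonelli for kernels), that
\[
\jref(\cdot)=\int_{(\cdot)}\frac{\exp(-\sum_k q_k(x_k))}{\psi_0(x_0)}\,\bfR(\rmd\omega).
\]
I would then extend to all Borel sets by a $\pi$--$\lambda$ argument. An alternative is a backward induction on $N$ that peels off the last kernel. This measurability and iterated-kernel bookkeeping is the only mildly delicate point; Tonelli applies because all integrands are nonnegative.

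Finally, the derived density $\rmd\jref/\rmd\bfR$ is everywhere strictly positive and finite, because $q_k\ge 0$ is finite and $0<\psi_0\le 1$. This gives $\jref\ll\bfR$, and also $\bfR\ll\jref$ with
\[
\frac{\rmd\bfR}{\rmd\jref}=\psi_0(x_0)\exp\Bigl(\sum_k q_k(x_k)\Bigr).
\]
So the two measures are mutually absolutely continuous, and \eqref{eq:dPbar_dR} holds.
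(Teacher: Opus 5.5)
Your proposal is correct and follows essentially the same route as the paper: you derive the one-step density \eqref{eq:pibar_2} from the Bellman recursion, multiply these densities along the trajectory so the product telescopes using $\psi_N\equiv 1$, and deduce mutual absolute continuity from the strict positivity and finiteness of the resulting density. Your rectangle/$\pi$--$\lambda$ argument only makes explicit the product-of-kernel-densities identity that the paper uses without comment; note that the rectangles need $N+1$ factors, one for $x_0$ and one for each $u_k$.
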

\vspace{0.2cm}
\begin{proof}
    By \eqref{eq:desire_bellman} and \eqref{eq:pibar_def}, we have
    \begin{equation}\label{eq:pibar_2}
            \bar{\pi}_k (\rmd u \mid x) = \Ee^{-q_k (x)} \frac{\psi_{k+1} (f_k(x,u))}{\psi_k (x)} R_k (\rmd u) . 
    \end{equation}
    Therefore, it holds that
    \begin{equation*}
        \frac{\rmd \bar{\pi}_k (\cdot \mid x_k)}{\rmd R_k} (u_k) = \Ee^{-q_k (x_k)} \frac{\psi_{k+1} (x_{k+1})}{\psi_k (x_k)} .
    \end{equation*}
    Substituting this into
    \[
        \frac{\rmd \jref}{\rmd \bfR} = \prod_{k=0}^{\ft-1} \frac{\rmd \bar{\pi}_k (\cdot \mid x_k)}{\rmd R_k} (u_k) ,
    \]
    we obtain \eqref{eq:dPbar_dR}.

    By Assumption~\ref{ass:cost}, it holds that $ \exp (-\sum_{k=0}^{\ft-1} q_k (x_k)) > 0 $ for any $ x_k \in \bbR^n $.
    Combining this with \eqref{eq:psi_bounded} and the expression~\eqref{eq:dPbar_dR} yields
    \[
        0 < \frac{\rmd \jref}{\rmd \bfR} < \infty , \quad \bfR\text{-a.s.},
    \]
    which means $ \jref \ll \bfR $ and $ \bfR \ll \jref $.
\end{proof}

Now, we are ready to prove Proposition~\ref{prop:performance_decompose}.
By repeatedly applying the chain rule for KL divergence~\cite[Theorem~2.4]{Leonard2014some} to separate each input $ u_k $ from history $ h_k $,
and noting that $\bfP^\pi$ and $\bfR$ have the same initial marginal
$\mu_0$, we obtain
\begin{equation}\label{eq:Ppi_R_kl}
\kl{\bfP^\pi}{\bfR}
=
\sum_{k=0}^{N-1}
\bbE^\pi\!\left[
\kl{\pi_k(\cdot\mid h_k)}{R_k}
\right].
\end{equation}
If $ \bfP^\pi \not\ll \bfR $, then $\mathbf P^\pi\not\ll\jref$ because
$\jref \ll\mathbf R$. Thus, both sides of \eqref{eq:performance_decompose} are infinite by \eqref{eq:Ppi_R_kl}.

    In what follows, we suppose that $ \bfP^\pi \ll \bfR $.
    By Lemma~\ref{lem:feynman-kac_density}, we have
    \[
        \log \frac{\rmd \bfR}{\rmd \jref} = \log \psi_0 (x_0) + \sum_{k=0}^{\ft-1} q_k (x_k)  .
    \]
Hence, we obtain
\[
\begin{aligned}
\log\frac{\rmd\mathbf P^\pi}{\rmd\jref}
&=
\log\frac{\rmd\mathbf P^\pi}{\rmd\mathbf R}
+\log\psi_0(x_0) + \sum_{k=0}^{N-1}q_k(x_k).
\end{aligned}
\]
Integrating the above equality with respect to \(\mathbf P^\pi\) yields
\[
\begin{aligned}
\kl{\mathbf P^\pi}{\jref}
&=
\kl{\mathbf P^\pi}{\mathbf R}
-\int_{\bbR^n}V_0(x)\mu_0(\rmd x) \\
&\quad +\bbE^\pi\!\left[\sum_{k=0}^{N-1}q_k(x_k)\right],
\end{aligned}
\]
where the first and third terms are nonnegative and the second term cannot be $ -\infty $ by Assumption~\ref{ass:finite_value_func}.
Combining this with \eqref{eq:Ppi_R_kl}, we obtain
\[
\kl{\mathbf P^\pi}{\jref}
=
J(\pi)
-\int_{\bbR^n}V_0(x)\mu_0(\rmd x) ,
\]
which proves \eqref{eq:performance_decompose}.

Since the first term of \eqref{eq:performance_decompose} does not depend on $ \pi $, Problem~\eqref{prob:unconstrained} is equivalent to minimizing the KL divergence $ \kl{\bfP^\pi}{\jref} $, which attains its minimum value of $ 0 $ if and only if $ \bfP^\pi = \jref $. Moreover, when $ \pi = \bar{\pi} $, we have $ \bfP^\pi = \jref $. 
Hence, $ \bar{\pi} $ is an optimal policy, and $ \jref $ is the unique optimal state-input distribution.
This completes the proof.

\section{Proof of Proposition~\ref{prop:markov}}\label{app:markov}
We show \eqref{eq:state_law_equal} by induction. If \eqref{eq:state_law_equal} holds at time $ k $, then for any bounded Borel function $ \varphi : \bbR^n \rightarrow \bbR $, we have
    \begin{align*}
        \bbE^\pi [\varphi (x_{k+1})] &= \int \int \varphi(f_k(x,u)) (\sfM \pi)_k (\rmd u \mid x) \bbP_k^\pi (\rmd x)\\
        &= \bbE^{\sf M \pi} [\varphi(x_{k+1})] .
    \end{align*}
    Since this equality holds for any bounded Borel function $\varphi$, the distributions of $x_{k+1}$ under $\pi$ and $\sfM\pi$ coincide. Since both processes have the same initial distribution $\mu_0$, induction proves \eqref{eq:state_law_equal} for any $k\in \bbra{0,\ft}$.

    Next, suppose that $ J(\pi) < \infty $ and prove \eqref{eq:J_pi_Mpi}. 
    Since $q_k\ge0$, it holds that
$
\bbE^\pi\!\left[
\kl{\pi_k(\cdot\mid h_k)}{R_k}
\right]<\infty
$
for any $k\in \bbra{0,N-1}$. Hence,
\begin{equation}\label{eq:abs_pi_R}
\pi_k(\cdot\mid h_k)\ll R_k,
\qquad \bbP^\pi\text{-a.s.}
\end{equation}
    For notational simplicity, set
    \[ 
    m_k(\rmd u\mid x):=(\mathsf M\pi)_k(\rmd u\mid x). 
    \] 
and let
$
z_k:=(x_0,u_0,\ldots,x_{k-1},u_{k-1})
$
so that $h_k=(z_k,x_k)$. Let $P_{u_k,x_k,z_k}^\pi := (u_k,x_k,z_k)_\#\bfP^\pi$ be the joint distribution of
$(u_k,x_k,z_k)$ under $\pi$, and define the reference distribution
\[
M_k(\rmd u,\rmd x,\rmd z)
:=R_k(\rmd u)P^\pi_{x_k,z_k}(\rmd x,\rmd z),
\]
where $ P_{x_k,z_k}^\pi := (x_k,z_k)_\# \bfP^\pi $.
The above absolute continuity~\eqref{eq:abs_pi_R} implies $P_{u_k,x_k,z_k}^\pi\ll M_k$.
Under $M_k$, $z_k\to x_k\to u_k$ forms a Markov chain.
Moreover, under $P_{u_k,x_k,z_k}^\pi$, the conditional distributions of $u_k$
given $(x_k,z_k)$ and given $x_k$ are
$\pi_k(\cdot\mid h_k)$ and $m_k(\cdot\mid x_k)$, respectively,
whereas under $M_k$ both conditional distributions equal $R_k$.
Therefore, applying the chain rule for conditional KL divergence
\cite[Lemma~5.5.4]{Gray1990entropy} to $ P_{u_k,x_k,z_k}^\pi $ and $ M_k $ yields
    \begin{align} 
        \bbE^{\pi}\!\left[ \kl{ \pi_k(\cdot\mid h_k)}{R_k}\right] &= \bbE^{\pi}\!\left[ \kl{\pi_k(\cdot\mid h_k) }{m_k(\cdot\mid x_k)}\right] \nonumber\\ 
        &+ \int_{\bbR^n} \kl{m_k(\cdot\mid x)}{R_k} \bbP_k^\pi (\rmd x) . \label{eq:markovization-entropy-decomposition} 
    \end{align}

    By \eqref{eq:state_law_equal}, the distribution of $x_k$ under $\mathbf P^{\mathsf M\pi}$ is also $\bbP_k^\pi$. 
    Hence,
    \begin{align*}
    &\int_{\bbR^n} \kl{m_k(\cdot\mid x)}{R_k} \bbP_k^\pi (\rmd x) \\
    &= \bbE^{{\mathsf M\pi}}\!\left[ \kl{ (\mathsf M\pi)_k(\cdot\mid x_k)}{R_k}\right] 
    \end{align*}
    and
    \[ 
    \bbE^{\pi}[q_k(x_k)] = \bbE^{{\mathsf M\pi}}[q_k(x_k)]. 
    \] 
    Therefore, summing \eqref{eq:markovization-entropy-decomposition} over $k=0,\ldots,N-1$ and adding the equal state-cost terms, we obtain the desired result~\eqref{eq:J_pi_Mpi}.

\section{Proof of Proposition~\ref{prop:opt_policy_condi}}\label{app:opt_policy_condi}
For notational convenience, define $ L (\omega) := G (x_0(\omega), x_\ft (\omega)) $. By the definition of \(\bfP^*\) in \eqref{eq:Pstar_def}, 
$
    L
    =
    \rmd\bfP^* / \rmd\jref,
$
$ \jref $-almost surely.
In particular, \(L\geq 0\) and
\(\mathbb E_{\jref}[L]=1\).

By definition, we have $ \int_\bbU \pi_k^* (\rmd u \mid x) = 1 $ whenever $ 0 < \alpha_k < \infty $, and it equals the reference policy $ \bar{\pi}_k $ elsewhere. 
    Thus, $ \pi_k^* $ is a probability kernel.

    For any bounded nonnegative Borel function $ \zeta : \bbU \rightarrow [0,\infty) $, Bayes' formula for conditional expectations yields
    \begin{equation}\label{eq:zeta_cond}
        \bbE_{\bfP^*} [\zeta (u_k) \mid x_k] = \frac{\bbE_{\jref}[ L \zeta(u_k) \mid x_k ]}{\bbE_{\jref}[ L \mid x_k]} ,
    \end{equation}
    provided that $ 0 < \bbE_{\jref}[ L \mid x_k]  < \infty $ $ \bfP^* $-almost surely.
    By the tower property, 
    \[
        \alpha_k (x_k) = \bbE_{\jref} [ L \mid x_k] , \quad \jref\text{-a.s.} 
    \]
    The positivity and finiteness of $ \alpha_k $ can be seen as follows.
    Since $ L \ge 0 $ and $ \bbE_{\jref} [L] = 1 $, $ \alpha_k (x_k) $ is finite $ \jref $-almost surely. Moreover, since $ \bfP^* \ll \jref $, it holds that
    \[
        \alpha_k (x_k) < \infty, \quad \bfP^*\text{-a.s.}
    \]
    For $ S_0 := \{x\in \bbR^n : \alpha_k(x) = 0 \} $, we have
    \[
        \bfP^* (x_k \in S_0) = \bbE_{\jref} [ L \bfone_{S_0}(x_k) ] = \bbE_{\jref}[\alpha_k (x_k) \bfone_{S_0} (x_k)] = 0.
    \]
    Thus,
    \begin{equation}\label{eq:positive_alpha}
        0 < \alpha_k (x_k) < \infty, \quad \bfP^*\text{-a.s.}
    \end{equation}

    By the tower property, the numerator of \eqref{eq:zeta_cond} can be written as
    \begin{align*}
        \bbE_{\jref} [ L\zeta(u_k) \mid x_k ] &= \bbE_{\jref}[ \zeta(u_k) \beta_k (x_k, u_k) \mid x_k ] \\
        &= \int_\bbU \zeta(u) \beta_k (x_k, u) \bar{\pi}_k (\rmd u \mid x_k) .
    \end{align*}
    Substituting this into \eqref{eq:zeta_cond}, we obtain
    \begin{align*}
    \bbE_{\bfP^*} [\zeta (u_k) \mid x_k] &= \frac{\int_\bbU \zeta(u) \beta_k(x_k,u) \bar{\pi}_k (\rmd u \mid x_k)}{\alpha_k(x_k)} \\
        &= \int_\bbU \zeta(u) \pi_k^* (\rmd u \mid x_k) .
    \end{align*}
    This implies that
    \[
        \pi_k^* (\rmd u \mid x_k) = \bfP^* (u_k \in \rmd u \mid x_k) , \quad \bfP^*\text{-a.s.}
    \]
    Therefore, $ \pi^* $ is the Markovization of an optimal policy generating $ \bfP^* $, and by Theorem~\ref{thm:maxent_solves_SB}, $ \pi^* $ must be an optimal policy.
    This completes the proof.

\section{Proof of Proposition~\ref{prop:doob_opt_policy}}\label{app:doob_opt_policy}

As in the proof of Proposition~\ref{prop:opt_policy_condi}, we show that
\begin{equation}\label{eq:Pstar_pi}
    \mathbf P^*(u_k\in\rmd u\mid h_k)
    =
    \pi_k^*(\rmd u\mid x_k)
    \qquad
    \mathbf P^*\text{-a.s.}
\end{equation}
To this end, we first show that
\begin{equation}
    \bbE_{\jref}[b(x_N)\mid h_k]
    =
    b_k(x_k) , \quad \forall k \in \bbra{0,\ft} .
    \label{eq:h-conditional-expectation}
\end{equation}
For $k=N$, this follows immediately from
$b_N=b$.  Suppose that
\eqref{eq:h-conditional-expectation} holds at time $k+1$.
By the tower property,
\[
\begin{aligned}
    \bbE_{\jref}[b(x_N)\mid h_k]
    &=
    \bbE_{\jref}\!\left[
        \bbE_{\jref}[b(x_N)\mid h_{k+1}]
        \,\middle|\, h_k
    \right]\\
    &=
    \bbE_{\jref}\!\left[
        b_{k+1}(x_{k+1})
        \,\middle|\, h_k
    \right] \\
    &=
    \int_{\bbU}
        b_{k+1}(f_k(x_k,u))
        \bar\pi_k(\rmd u\mid x_k) =
    b_k(x_k) ,
\end{aligned}
\]
where the last equality follows from \eqref{eq:markov-h}.
By induction, \eqref{eq:h-conditional-expectation} holds for any
$k\in \bbra{0,N}$.

Since $x_0$ is measurable with respect to the $ \sigma $-algebra generated by $ h_k $, it follows that
\begin{equation}
\begin{aligned}
    \bbE_{\jref}[L\mid h_k]
    = a(x_0) \bbE_{\jref}[b(x_\ft) \mid h_k] =
    a(x_0) b_k(x_k).
\end{aligned}
\label{eq:L-given-Fk}
\end{equation}
By the same argument as for \eqref{eq:positive_alpha}, $\bbE_{\jref}[L\mid h_k]$ is positive and finite
$\mathbf P^*$-almost surely.
Together with the assumption~\eqref{eq:initial-factor-positive}, this implies
\[
    0< b_k(x_k)<\infty,
    \quad
    \mathbf P^*\text{-a.s.}
\]

For any bounded Borel function $\zeta:\bbU\to[0,\infty)$,
Bayes' formula for conditional expectations gives
\begin{equation}
    \bbE_{\mathbf P^*}[\zeta(u_k)\mid h_k]
    =
    \frac{
        \bbE_{\jref}[L\zeta(u_k)\mid h_k]
    }{
        \bbE_{\jref}[L\mid h_k]
    },
    \qquad
    \mathbf P^*\text{-a.s.}
    \label{eq:Bayes-corollary}
\end{equation}
By
\eqref{eq:L-given-Fk} at time $k+1$,
\[
\begin{aligned}
    \bbE_{\jref}[L\mid h_k,u_k]
    &=
    \bbE_{\jref}[L\mid h_{k+1}]\\
    &=
    a(x_0)b_{k+1}(x_{k+1})\\
    &=
    a(x_0)
    b_{k+1}(f_k(x_k,u_k)).
\end{aligned}
\]
Therefore, using the tower property, the numerator of \eqref{eq:Bayes-corollary} can be written as
\[
\begin{aligned}
    &\bbE_{\jref}[L\zeta(u_k)\mid h_k]\\
    &=
    \bbE_{\jref}\!\left[
        \zeta(u_k)
        \bbE_{\jref}[L\mid h_k,u_k]
        \,\middle|\,h_k
    \right]\\
    &=
    a(x_0)
    \int_{\bbU}
        \zeta(u)
        b_{k+1}(f_k(x_k,u))
        \bar\pi_k(\rmd u\mid x_k).
\end{aligned}
\]
Substituting this equality and \eqref{eq:L-given-Fk} into
\eqref{eq:Bayes-corollary}, we obtain
\[
\begin{aligned}
    \bbE_{\mathbf P^*}[\zeta(u_k)\mid h_k]
    &=
    \int_{\bbU}
        \zeta(u)
        \frac{
            b_{k+1}(f_k(x_k,u))
        }{
            b_k(x_k)
        }
        \bar\pi_k(\rmd u\mid x_k)\\
    &=
    \int_{\bbU}
        \zeta(u)
        \pi_k^*(\rmd u\mid x_k) .
\end{aligned}
\]
Since this holds for any bounded nonnegative Borel function
$\zeta$, \eqref{eq:Pstar_pi} holds.
Using the tower property again, 
\begin{align*}
    \mathbf P^*(u_k\in\rmd u\mid x_k)
    &= \bbE_{\bfP^*} [ \bfP^* (u_k \in \rmd u \mid h_k) \mid x_k ] \\
    &=
    \pi_k^*(\rmd u\mid x_k),
    \quad
    \mathbf P^*\text{-a.s.}
\end{align*}
Therefore, \(\{\pi_k^*\}_{k=0}^{\ft-1}\) is the Markovization of an optimal causal
policy generating \(\mathbf P^*\).  By
Theorem~\ref{thm:maxent_solves_SB}, its Markovization is also optimal.
This completes the proof.

\section{Proof of Proposition~\ref{prop:factorization_sufficient}}\label{app:factorization_sufficient}

First, we show that $ c_S > 0 $.
By Assumption~\ref{ass:finite_kl}, there exists
$\widetilde{\gamma}\in\Gamma(\mu_0,\mu_N)$ such that
$
    \kl{\widetilde{\gamma}}{\bar{\mathbb Q}_{0,N}}<\infty 
$.
Hence,
$
    \widetilde{\gamma}\ll\bar{\mathbb Q}_{0,N}
$.
Since $\mu_N(S)=1$ and the second marginal of
$\widetilde{\gamma}$ is $\mu_N$, we have
$
    \widetilde{\gamma}(\mathbb R^n\times S)=1
$.
Therefore,
\[
    c_S:=\bar{\mathbb Q}_{0,N}(\mathbb R^n\times S)>0.
\]

Since every $\gamma\in\Gamma(\mu_0,\mu_N)$ is concentrated on
$\bbR^n\times S$, we have
\[
    \kl{\gamma}{\bar{\bbQ}_{0,N}^{S}}
    =
    \kl{\gamma}{\bar{\bbQ}_{0,N}}+\log c_S.
\]
Hence, the static SB problem with the reference $ \bar{\bbQ}_{0,\ft} $ and the one with $ \bar{\bbQ}_{0,\ft}^S $ have the
same minimizer $\gamma^*$.  By Assumption~\ref{ass:finite_kl} and Proposition~\ref{prop:static_SB}, the latter problem has a finite minimum.  Therefore, under the assumption~\eqref{eq:mutual_continuity}, we can apply \cite[Theorem~2.1]{Nutz2021introduction}; there exist real-valued measurable
functions $\phi$ and $\chi$ such that
\[
    \frac{\rmd\gamma^*}{\rmd\bar{\bbQ}_{0,N}^{S}}(x,y)
    =\Ee^{\phi(x)+\chi(y)}.
\]
Since
\[
    \frac{\rmd\bar{\bbQ}_{0,N}^{S}}{\rmd\bar{\bbQ}_{0,N}}(x,y)
    =c_S^{-1}\bfone_S(y),
\]
we obtain
\[
    \frac{\rmd\gamma^*}{\rmd\bar{\bbQ}_{0,N}}(x,y)
    =
    \Ee^{\phi(x)}
    \bigl(c_S^{-1}\bfone_S(y)\Ee^{\chi(y)}\bigr).
\]
Thus, \eqref{eq:endpoint_factorization} holds with
$a=\Ee^\phi$ and
$b=c_S^{-1}\bfone_S \Ee^\chi$, and
$0<a<\infty$ $\mu_0$-almost everywhere.
This completes the proof.

\section{Proof of Corollary~\ref{cor:reachability_block}}\label{app:reachability_block}

Fix $i\in\bbra{1,M}$.
Since $X$ and $C_i$ are compact, the same compactness and coarea
arguments as in the proof of Theorem~\ref{thm:uniform-reachability-finite-entropy} yield
a constant $c_i>0$ such that, for $\mu_0$-almost every
$x\in X_i$ and every Borel set $A\subseteq\mathbb R^n$,
    \begin{equation}\label{eq:Kbar_bound}
        \bar{K} (x,A) \ge c_i \mu_L^n (A \cap S_i)  .
    \end{equation}

Define
\[
    \gamma^\circ(\rmd x,\rmd y)
    :=
    \sum_{i=1}^M
    \bfone_{X_i}(x)\mu_0(\rmd x)
    \frac{\bfone_{S_i}(y)}{w_i}\mu_N(\rmd y).
\]
Since $\{X_i\}_{i=1}^M$ and $\{S_i\}_{i=1}^M$ are partitions
and
\(
\mu_0(X_i)=\mu_N(S_i)=w_i
\),
we have
$
    \gamma^\circ\in\Gamma(\mu_0,\mu_N).
$

For each $i$, \eqref{eq:Kbar_bound} yields
\[
    \bar{K}(x,dy)
    \ge c_i\bfone_{S_i}(y) \rmd y
\]
for $\mu_0$-almost every $x\in X_i$.
Moreover, by \eqref{eq:mass_identity} and
\(\mu_N(\rmd y)=\rho_N(y)\rmd y\),
\[
    \gamma^\circ(\rmd x,\rmd y)
    =
    \frac{\rho_N(y)}{w_i}\mu_0(\rmd x)\rmd y
\]
on $X_i\times S_i$, up to null sets. Consequently,
\[
    \frac{\rmd\gamma^\circ}{\rmd\bar{\bbQ}_{0,N}}(x,y)
    \le
    \frac{\rho_N(y)}{w_i c_i}
    \le
    \frac{\|\rho_N\|_\infty}{w_i c_i}
\]
for $\gamma^\circ$-almost every
$(x,y)\in X_i\times S_i$.
Since there are only finitely many blocks and $w_i,c_i>0$,
$\rmd\gamma^\circ/\rmd\bar{\bbQ}_{0,N}$ is essentially bounded.
As in the proof of Theorem~\ref{thm:uniform-reachability-finite-entropy}, this implies $ \kl{\gamma^\circ}{\bar{\bbQ}_{0,\ft}} < \infty $. This completes the proof.

\section{Proof of Corollary~\ref{cor:control_affine}}\label{app:control_affine}

We show that the condition~\eqref{eq:Hausdorff_positive} holds.
    Let $ p := m(N-1) $, $ q := m - n $, and fix $ \delta > 0 $ and a compact set $ D \subset \bbR^p $ with $ \mu_L^p (D) > 0 $.
    For $ \bfv = (u_0,\ldots,u_{N-2}) \in D $, set $ z_x (\bfv) := x_{N-1}^{x,\bfv} $, where $ x_{N-1}^{x,\bfv} $ is defined as in \eqref{eq:state_u}. We also define
    \[
        s(z,y) := g_{N-1} (z)^\top \! \left(g_{N-1} (z) g_{N-1} (z)^\top \right)^{-1} \! \left(y - \bar{f}_{N-1} (z)\right) ,
    \]
    which is a control input $ u_{\ft-1} $ that moves $ x_{N-1} = z $ to $ x_N = y $.
    Since $ X $, $ S $, and $ D $ are compact, $ s(z_x (\bfv), y) $ is uniformly bounded. Hence, there exists a compact $ C \subset \bbR^{mN} = \calO^N $ such that for any $ (x,y) \in X \times S $,
    \begin{align*}
        F_{x,y} &:= \{ (\bfv, s(z_x(\bfv),y) + w) : \\
        &\qquad \bfv \in D, \ w \in \ker g_{N-1} (z_x (\bfv)), \ \| w\|\le \delta  \} \subset C .
    \end{align*}
    By construction,
    $
        F_{x,y} \subset \calE_x^{-1} (y) 
    $.
    Moreover, the last block of $ D_{\bfu} \calE_x  $ in \eqref{eq:reachability_matrix} is $ g_{N-1} (x_{N-1}) $, which has full row rank. Hence, $ \calJ_x ( \bfu) > 0 $ for any $ \bfu \in F_{x,y} $, and
    \[
        F_{x,y} \subset \calE_x^{-1} (y) \cap C \cap \{\calJ_x > 0 \} .
    \]

    It remains to show that \( \calH^{d-n}( F_{x,y} ) > 0\).
    Let
$
    \Pi(\bfv,u_{N-1}):=\bfv 
$.
Since
\(\dim\ker g_{N-1}=q\), for every fixed \(\bfv\in D\),
\(F_{x,y}\cap\Pi^{-1}(\bfv)\) is a translate of a
\(q\)-dimensional ball of radius \(\delta\). 
Moreover, since $\Pi$ is $1$-Lipschitz, Eilenberg's inequality
~\cite[Theorem~2.10.25]{Federer1969geometric} yields a constant
$c_{p,q}>0$, depending only on $p$ and $q$, such that
\[
    \calH^{d-n}(F_{x,y})
    \ge c_{p,q}\,\mu_L^p(D) \omega_q\delta^q >0,
\]
where \(\omega_{q}\) denotes the volume of the unit ball in \(\mathbb R^{q}\), with $ \omega_0 := 1 $.
Therefore, the condition~\eqref{eq:Hausdorff_positive} holds.
The conclusion follows from
Theorem~\ref{thm:uniform-reachability-finite-entropy}.

\section{Proof of Corollary~\ref{cor:linear_reachability}}\label{app:linear_reachability}

Fix \(i\in \bbra{1,M}\).  Since \(\overline{X}_i\) and \(\overline{S}_i\) are compact and
$
\{v\in\calN_{\calR}:\|v\|\le\delta_i\}
$
is compact, the set \(C_i\) defined in
\eqref{eq:linear_condition} is compact.  By assumption,
\(C_i\subseteq\calO^N\).

Now fix \(x\in X_i\) and \(y\in S_i\), and set
\[
    z:=y-\Phi_A(N,0)x.
\]
Since \(\rank\mathcal R_N=n\), the right inverse
\(\mathcal R_N^\dagger\) satisfies
$
    \mathcal R_N\mathcal R_N^\dagger=I.
$ 
Hence, for any \(v\in\calN_{\calR}=\ker\mathcal R_N\), we have
$
    \calE_x(\mathcal R_N^\dagger z+v)
    =y
$.
Thus,
\[
    \mathcal R_N^\dagger z+v
    \in\calE_x^{-1}(y),
    \quad
    \forall v\in\mathcal N_{\calR}.
\]
Moreover, if \(\|v\|\le\delta_i\), then the definition of
\(C_i\), together with \(x\in X_i\) and \(y\in S_i\), gives
$
    \mathcal R_N^\dagger z+v\in C_i
$.
Consequently,
\[
    \mathcal R_N^\dagger z
    +\{v\in\mathcal N_{\calR}:\|v\|\le\delta_i\}
    \subseteq
    \calE_x^{-1}(y)\cap C_i.
\]

Moreover, by \eqref{eq:endpoint_map_linear}, for any $ \bfu \in  \calO^N $, we have
$
    D_{\mathbf u}\calE_x(\mathbf u)=\mathcal R_N 
$.
Since \(\rank\mathcal R_N=n\),
\[
    \calJ_x(\mathbf u)
    =
    \sqrt{\det(\mathcal R_N\mathcal R_N^\top)}>0 ,  \quad \forall \bfu \in \calO^N .
\]
Consequently,
\begin{equation}\label{eq:include}
    \mathcal R_N^\dagger z
    +\{v\in\mathcal N_{\calR}:\|v\|\le\delta_i\}
    \subseteq
    \calE_x^{-1}(y)\cap C_i\cap\{\calJ_x>0\}.
\end{equation}

By the rank--nullity theorem, we have
$
    \dim\mathcal N_{\calR}=d-n
$.
Hence, the set
\(\{v\in\mathcal N_{\calR}:\|v\|\le\delta_i\}\)
is a \((d-n)\)-dimensional closed ball.  By \eqref{eq:include} and translation invariance
of the Hausdorff measure,
\[
\begin{aligned}
&\calH^{d-n}\!\left(
    \calE_x^{-1}(y)\cap C_i\cap\{\calJ_x>0\}
\right) \\
&\ge
\calH^{d-n}\!\left(
    \{v\in\mathcal N_{\calR}:\|v\|\le\delta_i\}
\right)
=:h_i>0.
\end{aligned}
\]
The right-hand side \(h_i\) depends only on \(\delta_i\) and
\(\mathcal N_{\calR}\), and is therefore independent of \(x\in X_i\)
and \(y\in S_i\).  Thus the condition
\eqref{eq:block_reachability} holds for every
\(x\in X_i\) and \(y\in S_i\).
Applying Corollary~\ref{cor:reachability_block} completes the proof.

\end{document}